\newcommand{\df}[1]{\ensuremath{\mathrm{df}({#1})}\xspace}
\newcommand{\core}{{\mathrm{core}}}
\newcommand{\head}[1]{\vskip3pt\noindent{}\textbf{#1}.\xspace}
\theoremstyle{plain}
\newtheorem{theorem}{Theorem}[section]
\newtheorem{lemma}[theorem]{Lemma}
\newtheorem{proposition}[theorem]{Proposition}
\theoremstyle{definition}
\newtheorem{remark}[theorem]{Remark}
\newtheorem{example}[theorem]{Example}
\newtheorem*{example*}{Example}
\newtheorem{problem}{Problem}
\newtheorem{conjecture}{Conjecture}
\title{Cubic graphs with colouring defect 3}
\author[J. Karab\'a\v{s}]{J\'an Karab\'a\v{s}}
\email[J. Karab\'a\v{s}]{jan.karabas@umb.sk}
\author[E. M\'a\v cajov\'a]{Edita M\'a\v cajov\'a}
\email[E. M\'a\v cajov\'a]{macajova@dcs.fmph.uniba.sk}
\author[R. Nedela]{Roman Nedela}
\email[R. Nedela]{nedela@savbb.sk}
\author[M. \v Skoviera]{Martin \v Skoviera}
\email[M. \v Skoviera]{skoviera@dcs.fmph.uniba.sk}
\address[J. Karab\'a\v{s}]{
Department of Computer Science, Faculty of Natural Sciences,
Matej Bel University, Bansk\'a Bystrica, Slovakia
}
\address[E. M\'a\v cajov\'a, M. \v Skoviera]{
Comenius University, Mlynsk\' a dolina, Bratislava, Slovakia
}
\address[R. Nedela]{
Faculty of Applied Sciences, University of West Bohemia,
Pilsen, Czech Republic}
\address[J. Karab\'a\v{s}, R. Nedela]{
Mathematical Institute of Slovak Academy of Sciences, Bansk\'a Bystrica, Slovakia
}
\begin{document}

\begin{abstract}
The colouring defect of a cubic graph is the smallest number of
edges left uncovered by any set of three perfect matchings.
While $3$-edge-colourable graphs have defect $0$, those that
cannot be $3$-edge-coloured (that is, snarks) are known to have
defect at least $3$. In this paper we focus on the structure
and properties of snarks with defect~$3$. For such snarks we
develop a theory of reductions similar to standard reductions
of short cycles and small cuts in general snarks. We prove that 
every snark with defect~$3$ can be reduced to a snark with 
defect $3$ which is either nontrivial (cyclically 
$4$-edge-connected and of girth at least~$5$) or to one that 
arises from a nontrivial snark of defect greater than $3$ by 
inflating a vertex lying on a suitable $5$-cycle to a triangle. 
The proofs rely on a detailed analysis of Fano flows associated 
with triples of perfect matchings leaving exactly three uncovered 
edges. In the final part of the paper we discuss application 
of our results to the conjectures of Berge and Fulkerson, 
which provide the main motivation for our research.
\end{abstract}
\subjclass[2020]{05C15 (Primary) 05C75 (Secondary)}
\maketitle

\section{Introduction}
\noindent{}Every $3$-edge-colourable cubic graph has a set of
three perfect matchings that cover all of its edges.
Conversely, three perfect matchings cover the edge set of a
cubic graph only when they are pairwise disjoint and,
therefore, the graph is $3$-edge-colourable. It follows that if
a cubic graph is not $3$-edge-colourable, then any collection
of three perfect matchings leaves some of its edges not
covered. The minimum number of edges of a cubic graph $G$ left
uncovered by any set of three perfect matchings will be called
the \emph{colouring defect} of $G$ and will be denoted by
$\df{G}$. For brevity, we usually drop the adjective
``colouring'' and speak of the \emph{defect} of a cubic graph.
Clearly, a cubic graph has defect zero if and only if it is
$3$-edge-colourable, so defect can be regarded as a measure of
uncolourability of cubic graphs.

The concept of colouring defect was introduced by Steffen
\cite{S2} who used the notation $\mu_3(G)$ but did not coin any
term for it. Among other things he proved that every
$2$-connected cubic graph which is not $3$-edge-colourable --
that is, a \emph{snark} -- has defect at least $3$. He also
proved that the defect of a snark is at least as large as one
half of its girth. Since there exist snarks of arbitrarily
large girth \cite{Ko}, there exist snarks of arbitrarily large
defect.

The defect of a cubic graph was further examined by Jin and
Steffen in \cite{JS} and was also discussed in the survey of
uncolourability measures by Fiol et al.
\cite[pp.13-14]{FMS-survey}. In \cite{JS}, Jin and Steffen
studied the relationship of defect to other measures of
uncolourability, in particular its relationship to oddness. The
\emph{oddness} of a cubic graph $G$, denoted by $\omega(G)$, is
the minimum number of odd circuits in a $2$-factor of $G$; it
is correctly defined for any bridgeless cubic graph. Jin and
Steffen proved \cite[Corollary 2.4]{JS} that $\df G\geq
3\omega(G)/2$ and investigated the extremal case where $\df G=
3\omega(G)/2$ in detail.

In this paper we to continue the study of the colouring defect
of snarks with emphasis on snarks with minimum possible defect,
that is, defect $3$. Snarks whose defect equals~$3$ have a
remarkable property that they contain a $6$-cycle
\cite[Corollary~2.5]{S2}, which immediately implies that their
cyclic connectivity does not exceed $6$. This fact relates the
study of colouring defect to a fascinating conjecture of Jaeger
and Swart \cite[Conjecture 2]{JSw} which suggests that the
cyclic connectivity of every snark is bounded above by $6$. It
is therefore a natural question to ask what structural
properties of snarks ensure that their defect is~$3$, and
conversely, what structural properties are implied by the fact
that the defect is, or is not, this minimum possible value.

A natural approach to improving our understanding of the
structure of snarks with defect $3$ is through eliminating
certain trivial features that they might posses. The main
purpose of this paper is, therefore, to develop a theory of
reductions for snarks with defect~$3$ analogous to standard
reductions of short cycles and small cuts in general snarks.
Snarks that have cycle-separating edge-cuts of size smaller
than $4$ or circuits of length smaller than $5$ are generally
considered to be trivial. This is explained by the fact that if
a snark contains a digon, a triangle, or a quadrilateral, one
can easily remove it and subsequently restore $3$-regularity to
produce a smaller snark; similar reductions can be applied to
small cuts~\cite{Wat}. Thus, a \emph{nontrivial} snark must be
cyclically $4$-edge-connected and have girth at least $5$.

The standard reductions to nontrivial snarks have been
extremely useful in numerous investigations related to
important conjectures in the area, such as the cycle
double-cover conjecture \cite{Jae-strong}, 5-flow conjecture
\cite{Ko2}, or Fulkerson's conjecture \cite{MM}. In this
situation it is natural to attempt finding similar reductions
within the class of snarks with defect 3. The expected aim
would be to show that, given a snark with defect~3, one can
eliminate cycles of length smaller than~5 and cycle-separating
edge-cuts of size smaller than 4 to produce -- in a certain
natural manner -- a snark whose defect is still 3 but lacks
these ``trivial'' features. Our main results,
Theorems~\ref{thm:reduction} and~\ref{thm:essential},
demonstrate that this expectation almost comes true.

In Theorem~\ref{thm:reduction} we show that every snark $G$
with defect $3$ can be reduced to a snark $G'$ with defect $3$ 
such that either $G'$ is nontrivial or $G'$ contains a single 
triangle whose contraction produces a nontrivial snark with 
defect greater than~$3$. Such a triangle is called 
\emph{essential}.

In Theorem~\ref{thm:essential} we further show that the reduced
snark $G'$ arises from a nontrivial snark by inflating a vertex
lying on a 5-cycle that contains an edge $uv$ such that
$G'-\{u,v\}$ is $3$-edge-colourable. Our main results thus
indicate that in the study of colouring defect of cubic graphs
one cannot completely avoid graphs with triangles.

In addition, in Theorem~\ref{thm:df_n3} we show that by
contracting an essential triangle in a snark with defect $3$
one can obtain a nontrivial snark with an arbitrarily high
defect.

Proofs of these results require a detailed study of triples
$\{M_1,M_2,M_3\}$ of perfect matchings in cubic graphs that
leave the minimum number of uncovered edges, along with
structures derived from them, especially hexagonal cores and
Fano flows. A \emph{hexagonal core} of a snark with defect $3$
is a $6$-cycle induced by the set of all edges that are not
simply covered by the triple $\{M_1, M_2,M_3\}$; it alternates
the uncovered edges with the doubly covered ones. A closely
related concept is a that of a \emph{Fano flow}. It is a
nowhere-zero
$\mathbb{Z}_2\times\mathbb{Z}_2\times\mathbb{Z}_2$-flow on $G$
induced by the $2$-factors complementary to $M_1$, $M_2$, and
$M_3$. Its flow values can be identified with the points of the
Fano plane and flow patterns around vertices with the lines of
a configuration $F_4$ of four lines covering all seven points
of the Fano plane (see Figure~\ref{fig:konf}). Analysing Fano
flows across small edge-cuts or around short circuits makes up
a substantial part of the proofs of
Theorems~\ref{thm:reduction} and~\ref{thm:essential}.

Our paper is organised as follows. In the next section we
collect the most important definitions and facts needed for
understanding this paper. In Section~\ref{sec:arrays} we
introduce structures related to the colouring defect and
investigate their properties.
After establishing auxiliary
results about reductions in Section~\ref{sec:reduction}, we
prove our main results, Theorems~\ref{thm:reduction}
and~\ref{thm:essential}, in Section~\ref{sec:main}. In
Section~\ref{sec:Berge} we discuss how the reductions
established in the previous sections can be applied to
verifying Berge's conjecture for snarks of defect~$3$. The
conjecture states that five perfect matchings are sufficient to
cover all edges of any bridgeless cubic graphs. We explain why
every snark of defect $3$ fulfils Berge's conjecture, moreover,
we provide a structural characterisation of  those snarks of
defect 3 that require five perfect matchings to cover their
edges (Theorem~\ref{thm:bergegen}). This significantly
strengthens a result of Steffen \cite[Theorem~2.14]{S2}, where
Berge's conjecture was verified for cyclically
$4$-edge-connected cubic graphs of defect $3$. The proof will
appear in a separate article \cite{KMNS-Bergegen} (see
also~\cite{KMNS-eurocomb}). Finally, in Section~\ref{sec:comp}
we summarise the outputs of computer-aided experiments directed
towards defect and cores of nontrivial snarks of order up to
36. These results provide partial support for several
conjectures that we propose at the end of this paper.

\section{Preliminaries}\label{sec:prelim}

\noindent{}All graphs in this paper are finite and for the most
part cubic (3-valent). Multiple edges and loops are permitted.
We use the term \emph{circuit} to mean a connected $2$-regular
graph. An $m$-\emph{cycle} is a circuit of length~$m$. The
length of a shortest circuit in a graph is its \emph{girth}. If
$H$ is an induced subgraph of $G$, we let $G/H$ denote the
graph arising from $G$ by contracting each component of $H$
into a single vertex.

For a subgraph  (or just a set of vertices) $Y$ of a graph $G$
we let $\delta(Y)$ denote the edge cut consisting all edges
joining $Y$ to vertices not in $Y$. A connected graph $G$ is
said to be \emph{cyclically $k$-edge-connected} if the removal
of fewer than $k$ edges from $G$ cannot create a graph with at
least two components containing circuits. An edge cut $S$ in
$G$ that separates two circuits from each other is
\emph{cycle-separating}.

Large graphs are typically constructed from smaller building
blocks called multipoles. Like a graph, each \emph{multipole}
$M$ has its vertex set $V(M)$, its edge set $E(M)$, and an
incidence relation between vertices and edges. Each edge of $M$
has two ends, and each end may, but need not be, incident with
a vertex of $M$. An end of an edge that is not incident with a
vertex is called a \emph{free end} or a \emph{semiedge}. An
edge with exactly one free end is called a \emph{dangling
edge}. An \emph{isolated edge} is one whose both ends are free.
All multipoles considered in this paper are \emph{cubic}; this
means that every vertex is incident with exactly three
edge-ends. An \emph{$n$-pole} is a multipole with $n$ free
ends. Free ends of a multipole can be distributed into pairwise
disjoint sets, called \emph{connectors}.
An \emph{$(n_1, n_2,\ldots, n_k)$-pole} is an $n$-pole with
$n=n_1+n_2+\cdots + n_k$ whose semiedges are distributed into
$k$ connectors $S_1,S_2,\ldots, S_k$, each $S_i$ being of size
$n_i$.

Consider an arbitrary $n$-pole $M$ and choose two distinct free
ends $s_i$ and $s_j$ belonging to edges $e$ and $e'$,
respectively. We say that a multipole $M'$ is formed by the
\emph{junction} of $s_i$ and $s_j$ if $M'$ arises from $M$ by
identifying $s_i$ and $s_j$ while retaining the other ends of
$e$ and $e'$. The newly formed edge is denoted by $s_i*s_j$. If
$s_i$ and $s_j$ are the free ends of the same isolated
edge $e$, the junction amounts to the deletion of $e$. A
\emph{junction} of two $n$-poles $M$ and $N$ is a cubic graph,
denoted by $M * N$, arising from $M$ and $N$ by performing the
junction of their respective semiedge sets.  If a bijection
$\sigma$ between the semiedges of $M$ and those of $N$ is
specified, we write $M*_{\sigma}N$. Throughout the paper we
will use the following convention: if a graph $G$ can be
expressed in the form $G=M*N$, then, depending on the context,
$G-M$ will either mean the multipole $N$ including its dangling
edges or will denote the induced subgraph $G-M$. There is no
danger of confusion.

An \emph{edge colouring} of a multipole $M$ is a mapping from
the edge set of $M$ to a set of colours. A colouring is
\emph{proper} if any two edge-ends incident with the same
vertex carry distinct colours. A \emph{$k$-edge-colouring} is a
proper colouring where the set of colours has $k$ elements. A
cubic graph $G$ is said to be \emph{colourable} or
\emph{uncolourable} depending on whether if it does or does not
admit a $3$-edge-colouring. A $2$-connected uncolourable cubic
graph is called a \emph{snark}.

Our definition leaves the concept of a snark as wide as
possible since more restrictive definitions may lead to
overlooking certain important phenomena in cubic graphs.
Our definition thus follows Cameron et al. \cite{CCW},
Nedela and \v Skoviera \cite{NS-decred}, Steffen \cite{S1}, and
others, rather than a more common approach where snarks are
required to be cyclically $4$-edge-connected and have girth at
least $5$, see for example~\cite{FMS-survey}. In this paper,
such snarks are called \emph{nontrivial}.

The problem of nontriviality of snarks has been widely
discussed in the literature, see for example
\cite{CCW,NS-decred,S1}. Here we follow a systematic approach
to nontriviality of snarks proposed by Nedela and \v Skoviera
\cite{NS-decred}. Aset of vertices or an induced subgraph $H$
of a snark $G$ is called \emph{non-removable} if $G - V(H)$ is
colourable; otherwise, $H$ is \emph{removable}. A snark $G$ is
\emph{critical} if every pair of distinct adjacent vertices of
$G$ is non-removable. A snark is \emph{bicritical} if every
pair of distinct vertices of $G$ is non-removable. A snark $G$
is \emph{irreducible} if every induced subgraph $H$ with at
least two vertices is non-removable. It is known that a snark
is irreducible if and only if it is bicritical,
see~\cite[Theorem~4.4 and Corollary~4.6]{NS-decred}.

It is an easy observation that short cycles in snarks
are removable.
\begin{lemma}\label{lem:odstr4}
In an arbitrary snark, every circuit of length at most $4$ is
removable.
\end{lemma}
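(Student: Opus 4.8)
The plan is to argue by contradiction. Recall that $C$ is removable means $G-V(C)$ is uncolourable; so I assume the opposite, namely that $G-V(C)$ is colourable, and extend a $3$-edge-colouring of $G-V(C)$ to all of $G$, contradicting that $G$ is a snark. Throughout I regard $G-V(C)$ as the multipole that keeps the edges of the cut $\delta(V(C))$ as dangling edges, so that a colouring of $G-V(C)$ is exactly a colouring of this multipole, and extending it to $G$ amounts only to colouring the few edges spanned by $V(C)$ compatibly. We may assume $C$ is chordless, since a chord only decreases $|\delta(V(C))|$, which tightens the constraints below and makes the extension easier. Hence $\delta(V(C))$ has exactly $k\in\{2,3,4\}$ edges (a snark, being $2$-connected, has no loops and no bridges). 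The colours on these cut edges are governed by the parity lemma: in any proper $3$-edge-colouring of a cubic multipole the number of semiedges of each colour is congruent, modulo~$2$, to the total number of semiedges.

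For $k=2$ (a digon) the parity lemma forces the two cut edges to receive the same colour, after which the two parallel edges of $C$ are forced to the remaining two colours, consistently; the colouring extends. For $k=3$ (a triangle) parity forces the three cut edges to three distinct colours, the three edges of $C$ are then uniquely determined, and one checks at once that they are mutually consistent. In both cases we obtain a $3$-edge-colouring of $G$, a contradiction.

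The case $k=4$ is the crux. Parity permits only two kinds of colour pattern on the four cut edges: all four equal, or two colours each occurring twice. Labelling the cut edges $1,2,3,4$ around $C$ and requiring each edge of $C$ to avoid the colours of the two cut edges at its ends, one verifies that $C$ can be properly coloured whenever the pattern is monochromatic or has two equal colours at \emph{adjacent} positions; the sole obstruction is the ``opposite'' pattern, in which positions $1,3$ share a colour $A$ and positions $2,4$ share the other colour $B$. To remove it I would use a Kempe chain: the four cut edges all lie in the $A$--$B$ coloured subgraph of $G-V(C)$, where they are the terminal edges of exactly two $A$--$B$ paths; interchanging $A$ and $B$ along the path through one chosen cut edge replaces the opposite pattern by an extendable one. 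Since a Kempe swap preserves properness, this again yields a $3$-edge-colouring of $G$. (If two \emph{opposite} vertices of $C$ happen to share an outer neighbour, the two cut edges meeting there are automatically differently coloured, so the opposite pattern cannot even arise.)

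The single genuine obstacle in this scheme is the $k=4$ opposite pattern: for $k\le 3$ every parity-admissible pattern extends outright, whereas the $4$-cycle forces the detour through a Kempe chain, so the plain ``colour-and-extend'' argument breaks down. The one point demanding careful (if short) verification is that a single swap always suffices, i.e.\ that whichever way the four cut edges pair into $A$--$B$ paths, recolouring one path converts the opposite pattern into an extendable one.
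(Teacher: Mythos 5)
Your proof is correct. The paper states this lemma as a folklore ``easy observation'' and gives no proof of its own, so there is nothing to diverge from; your argument -- Parity Lemma to pin down the admissible colour patterns on $\delta(V(C))$, direct extension for $k\le 3$ and for the monochromatic and adjacent-equal patterns when $k=4$, and a single Kempe ($A$--$B$) swap to destroy the one non-extendable ``opposite'' pattern -- is exactly the standard argument the authors are implicitly invoking, and your key verification (the four cut edges are the ends of exactly two $A$--$B$ paths, and recolouring either path yields an extendable pattern under any of the three possible pairings) checks out. The only glossed-over point is that ``chordless'' does not literally cover a parallel edge inside $V(C)$, but those degeneracies likewise shrink the cut to size $2$ and extend just as easily, so nothing is lost.
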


In the study of snarks it is useful to take the colours $1$,
$2$, and $3$ to be the nonzero elements of the group
$\mathbb{Z}_2\times\mathbb{Z}_2$. To be specific, one can
identify a colour with its binary representation: $1=(0,1)$,
$2=(1,0)$, and $3=(1,1)$. With this choice, the condition that
the three colours meeting at any vertex $v$ are all distinct is
equivalent to requiring that the sum of the colours at $v$ is
$0=(0,0)$. The latter condition coincides with the Kirchhoff
law for a nowhere-zero $\mathbb{Z}_2\times\mathbb{Z}_2$-flow on
a graph, or a multipole. Thus a proper $3$-edge-colouring of a
cubic multipole coincides the a nowhere-zero
$\mathbb{Z}_2\times\mathbb{Z}_2$-flow on it.

The following well-known statement is a direct consequence of
Kirchhoff's law.

\begin{lemma}[Parity Lemma]\label{lem:par}
Let $M = M(s_1,s_2, \ldots,s_k)$ be a $k$-pole endowed with a
$3$-edge-colouring $\phi$. Then
$$\sum_{i=1}^k\phi(s_i)=0.$$
Equivalently, the number of free ends of $M$ carrying any fixed
colour has the same parity as $k$.
\end{lemma}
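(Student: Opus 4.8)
The plan is to prove the Parity Lemma directly from Kirchhoff's law, exploiting the identification of colours with the nonzero elements of $\mathbb{Z}_2\times\mathbb{Z}_2$ that was set up just above the statement. Under this identification, a proper $3$-edge-colouring $\phi$ of the cubic $k$-pole $M$ is exactly a nowhere-zero $\mathbb{Z}_2\times\mathbb{Z}_2$-flow, meaning that at every vertex the three incident edge-ends carry colours summing to $0$ in the group. So the whole statement reduces to a single summation identity over the group $\mathbb{Z}_2\times\mathbb{Z}_2$, and the only work is bookkeeping about which edge-ends get counted how many times.

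First I would form the total sum $\sum_{v\in V(M)} \sigma(v)$, where $\sigma(v)$ denotes the sum over the three edge-ends incident with $v$ of their $\phi$-colours. By the vertex condition each $\sigma(v)=0$, so the grand total is $0$. Then I would recompute the same grand total by grouping the contributions edge by edge rather than vertex by vertex. An ordinary (internal) edge of $M$ has both ends incident with vertices of $M$, so its colour appears exactly twice in the grand total; since every element $x$ of $\mathbb{Z}_2\times\mathbb{Z}_2$ satisfies $x+x=0$, internal edges contribute nothing. An edge carrying a free end $s_i$ contributes its colour $\phi(s_i)$ exactly once for that free end. Matching the two ways of computing the total then gives $\sum_{i=1}^k \phi(s_i)=0$, which is the displayed identity.

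For the equivalent parity formulation I would argue coordinatewise. Write $\phi(s_i)=(a_i,b_i)$ with $a_i,b_i\in\{0,1\}$. The vanishing of the group sum means $\sum_i a_i\equiv 0$ and $\sum_i b_i\equiv 0 \pmod 2$. Since each colour is nonzero, for each $i$ at least one of $a_i,b_i$ equals $1$. Counting how many of the $k$ free ends carry a prescribed colour and using these two parity congruences, one checks that for any fixed nonzero $x$ the number of indices $i$ with $\phi(s_i)=x$ is congruent modulo $2$ to $k$. A clean way to see this is to note that changing a single free end from colour $x$ to any other colour alters the count of colour-$x$ ends by $\pm 1$ but must preserve both coordinate parities only up to the constraints already recorded; so I would instead just verify the three cases (the three nonzero colours) against the two congruences, which is a short finite check rather than an argument with a genuine obstacle.

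There is really no hard part here: the statement is elementary and the proof is a one-line double count. The only point that needs care is making sure dangling edges and isolated edges are handled correctly in the edge-by-edge recount, since an isolated edge contributes two free ends, each counted once, with equal colours summing to $0$, consistent with the internal-edge case. I would state this explicitly so that the double count is valid for arbitrary multipoles in the sense defined in the Preliminaries, and not merely for $n$-poles all of whose non-free ends meet vertices.
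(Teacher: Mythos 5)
Your proof is correct and follows exactly the route the paper intends: the paper states the lemma as a ``direct consequence of Kirchhoff's law'' without further detail, and your double count (vertex-by-vertex versus edge-by-edge, with internal edges cancelling since $x+x=0$ in $\mathbb{Z}_2\times\mathbb{Z}_2$ and isolated edges contributing $2\phi(e)=0$) together with the coordinatewise parity check is precisely that consequence spelled out. The aside about recolouring a single free end is a detour you rightly abandon; the finite check of the three colours against the two congruences $n_2+n_3\equiv 0$ and $n_1+n_3\equiv 0 \pmod{2}$ is all that is needed.
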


Parity Lemma has a remarkable consequence that colourable
$4$-poles extendable to a snark fall into two types --
isochromatic and heterochromatic (see for example
\cite[Section~3]{ChS}). Such a $4$-pole is \emph{isochromatic}
if its semiedges can be partitioned into two pairs $\{x,x'\}$
and $\{y,y'\}$ such that for every $3$-edge-colouring $\phi$
one has $\phi(x)=\phi(x')$ and $\phi(y)=\phi(y')$; otherwise it
is \emph{heterochromatic}. It can be shown that a colourable
$4$-pole is isochromatic $M$ if and only if $G$ arises from a
snark $G$ by removing two adjacent vertices $u$ and $v$.
Moreover, the two pairs $\{x,x'\}$ and $\{y,y'\}$ correspond to
the edges formerly incident with the vertices $u$ and $v$,
respectively.

\section{Arrays of perfect matchings and the defect of a
snark}\label{sec:arrays}

\noindent{}Various important structures in cubic graphs, such
as Tait colourings, Berge or Fulkerson covers, Fan-Raspaud
colourings, and several others, can be described in terms of
sets or lists of perfect matchings obeying certain additional
conditions. In this paper such lists will be called
\emph{arrays} of perfect matchings. To be more precise, a
\emph{$k$-array of perfect matchings} in a cubic graph $G$,
briefly a \emph{$k$-array} for $G$, is an arbitrary collection
$\mathcal{M}=\{M_1, M_2, \ldots, M_k\}$ of $k$ not necessarily
distinct perfect matchings of $G$. In particular, a \emph{Berge
cover} of $G$ is a $5$-array $\mathcal{B}$ such that each edge
of $G$ belongs to some member of $\mathcal{B}$; a
\emph{Fulkerson cover} is $6$-array $\mathcal{F}$ such that
each edge of $G$ belongs to precisely two members of
$\mathcal{F}$.

This paper focuses on the properties of snarks that can be
expressed by means of $k$-arrays of perfect matchings with
$k=3$. Since every proper $3$-edge-colouring gives rise to
an array whose members are the three colour classes, $3$-arrays
can be regarded as approximations of $3$-edge-colourings. An
edge of $G$ that belongs to at least one of the perfect
matchings of the array $\mathcal{M}=\{M_1, M_2, M_3\}$ will be
considered to be \emph{covered}. An edge will be called
\emph{uncovered}, \emph{simply covered}, \emph{doubly covered},
or \emph{triply covered} if it belongs, respectively, to zero,
one, two, or three distinct members of~$\mathcal{M}$.

Given a $3$-array $\mathcal{M}$ of $G$, it is a natural task to
maximise the number of covered edges, or equivalently, to
minimise the number of uncovered ones. A $3$-array that leaves
the minimum number of uncovered edges will be called
\emph{optimal}. The minimal number of edges left uncovered by an
optimal $3$-array is the \emph{colouring defect} of $G$,
briefly, the \emph{defect}, denoted by $\df{G}$.

In this section we establish several basic results concerning
$3$-arrays and the defect of a cubic graph with emphasis on
optimal $3$-arrays. We remark that some of the ideas and
results have already appeared in the papers by Steffen and
others~\cite{JS, JSM, S2, S3}. However, in order to make our
exposition self-contained we need to accompany such results
with proofs.

With each $3$-array of perfect matchings one can associate
several important structures which reside within the underlying
graph. We discuss two of them: the characteristic flow and the
core.

\head{1. Characteristic flow} Let $\mathcal{M}=\{M_1, M_2,
M_3\}$ be a $3$-array of a cubic graph $G$. One way to describe
$\mathcal{M}$ is based on regarding the indices $1$, $2$, and
$3$ as colours. Since the same edge may belong to more than one
member of $\mathcal{M}$, an edge of $G$ may receive from
$\mathcal{M}$ more than one element of the set
$\{1,2,3\}$. To each edge $e$ of $G$ we can therefore assign
the list $\phi(e)$ of elements of $\{1,2,3\}$  in
lexicographic order it receives from $\mathcal{M}$. We let
$w(e)$ denote the number of colours in the list $\phi(e)$
and call it the \emph{weight}.
In this way $\mathcal{M}$ gives rise to an edge-colouring
$$\phi\colon E(G)\to\{\emptyset, 1,2,3, 12, 13, 23, 123\}$$
where $\emptyset$ denotes the empty list. We call $\phi$ the
\emph{characteristic colouring} for $\mathcal{M}$. Obviously,
such a colouring determines a $3$-array if and only if, for
each vertex $v$ of $G$, all three indices from $\{1,2,3\}$
occur precisely once on the edges incident with $v$. In
general, $\phi$ need not be a proper colouring. As we shall
see below, $\phi$ is a proper edge-colouring if and only if
$G$ has no triply covered edge.

A different but equivalent way of representing a $3$-array uses
a mapping
\[
\chi\colon E(G)\to \mathbb{Z}_2^3,
\quad e\mapsto \chi(e)=(x_1, x_2, x_3)
\]
defined by setting $x_i = 0$ if and only if $e\in M_i$ for
$i\in\{1,2,3\}$. Since the complement of each $M_i$ in $G$ is a
$2$-factor, it is easy to see that $\chi$ is a
$\mathbb{Z}_2^3$-flow. We call $\chi$ \emph{the characteristic
flow} for $\mathcal{M}$. Again, $\chi$ is a nowhere-zero
$\mathbb{Z}_2^3$-flow if and only if $G$ contains no triply
covered edge. In the context of $3$-arrays the characteristic
flow was introduced in~\cite[p. 166]{JSM},
but the idea is older, see \cite[Section~4]{MS-TCS}. Observe that
the characteristic flow $\chi$ of a $3$-array and the colouring
$\phi$ uniquely determine each other. In particular, the
condition on $\phi$ requiring all three indices from
$\{1,2,3\}$ to occur precisely once in a colour around any
vertex is equivalent to Kirchhoff's law.

The following result characterises $3$-arrays with no triply
covered edge.

\begin{proposition}\label{prop:notriply}
Let $\mathcal{M}$ be a $3$-array of perfect matchings of a
cubic graph $G$. The following three statements are equivalent.
\begin{enumerate}[{\rm (i)}]
\item $G$ has no triply covered edge with
    respect to $\mathcal{M}$.
\item The associated colouring $\phi\colon
    E(G)\to\{\emptyset, 1,2, 3, 12, 13, 23, 123\}$ is
    proper.
\item The characteristic flow $\chi$ for $\mathcal{M}$,
    with values in $\mathbb{Z}_2^3$, is nowhere-zero.
\end{enumerate}
\end{proposition}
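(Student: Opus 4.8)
The plan is to establish the chain of equivalences by first disposing of (i) $\Leftrightarrow$ (iii), which is essentially a restatement of the definitions, and then proving (i) $\Leftrightarrow$ (ii) through a purely local analysis at each vertex. In this way every implication reduces to an elementary property of perfect matchings, and no global structure of $G$ is needed.

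For (i) $\Leftrightarrow$ (iii) I would argue directly from the construction of the characteristic flow. By definition $\chi(e)=(x_1,x_2,x_3)$ with $x_i=0$ exactly when $e\in M_i$, so $\chi(e)=(0,0,0)$ holds if and only if $e$ lies in all three matchings simultaneously, that is, if and only if $e$ is triply covered. Hence $\chi$ is nowhere-zero precisely when $G$ has no triply covered edge. (As already noted in the text, $\chi$ is automatically a $\mathbb{Z}_2^3$-flow because the complement of each $M_i$ is a $2$-factor, so the only condition at stake here is the nowhere-zero property.)

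For (i) $\Leftrightarrow$ (ii) the key observation is that, since each $M_i$ is a perfect matching, exactly one of the three edges $e_1,e_2,e_3$ incident with a vertex $v$ carries the index $i$. Consequently each of the indices $1,2,3$ occurs exactly once among the lists $\phi(e_1),\phi(e_2),\phi(e_3)$, whence the weights satisfy $w(e_1)+w(e_2)+w(e_3)=3$. Enumerating the partitions of $3$ into at most three parts, namely $(1,1,1)$, $(2,1,0)$, and $(3,0,0)$, I would check that in the first two cases the three lists at $v$ are pairwise distinct, whereas in the case $(3,0,0)$ two of them equal $\emptyset$ and $\phi$ fails to be proper at $v$. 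The decisive point is that no index can be shared by two edges at $v$, so two incident edges can agree in $\phi$-value only when both receive the empty list, and this in turn forces the third edge to absorb all three indices, i.e.\ to be triply covered. Thus $\phi$ is improper at $v$ if and only if a triply covered edge is incident with $v$, and ranging over all vertices gives (i) $\Leftrightarrow$ (ii).

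The step demanding the most care is the vertex-local case analysis: one must verify that improperness of $\phi$ can arise only from the empty-list collision and from no other coincidence of lists. This hinges on the remark that, because each colour appears exactly once around $v$, any two distinct incident edges have different lists unless both are empty. It is a small but essential point, since it is exactly what ties properness of $\phi$ to the absence of triply covered edges and thereby closes the loop between (i), (ii), and (iii).
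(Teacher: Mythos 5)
Your proposal is correct and follows essentially the same route as the paper: the equivalence with the flow condition is read off directly from the definition of $\chi$ (the paper phrases this as the number of vanishing coordinates of $\chi(e)$ equalling $w(e)$), and the equivalence with properness of $\phi$ is obtained by the same vertex-local analysis of the weight distributions $(1,1,1)$, $(2,1,0)$, $(3,0,0)$ forced by the fact that each perfect matching contributes exactly one index at each vertex. Your explicit remark that two incident edges can share a list only when both lists are empty is a slightly more careful articulation of the step the paper states as "in both cases it is obvious," but the substance is identical.
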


\begin{proof}
(i) $\Leftrightarrow$ (ii): Consider an arbitrary vertex $v$ of
$G$ and the three edges $e_1$, $e_2$, and $e_3$ incident with
$v$. Since every perfect matching contains an edge incident
with $v$, and since $G$ has no triply covered edge, the
distribution of weights on $(e_1,e_2,e_3)$ is either $(1,1,1)$
or $(2,1,0)$ up to permutation of values. In both cases it is
obvious that $e_1$, $e_2$, and $e_3$ receive distinct colours
from $\phi$. For the converse, if an edge $e=uv$ of $G$ is
triply covered, then $\phi(e)=123$ and the other two edges
incident with $u$ receive colour $\emptyset$. Thus $\phi$ is
not proper.

\medskip

(ii) $\Leftrightarrow$ (iii):  This is an immediate consequence
of the fact that the number of vanishing coordinates in
$\chi(e)$ coincides with $w(e)$.
\end{proof}

\begin{figure}[h!]
\subfigure[]{ \includegraphics[scale=1.4]{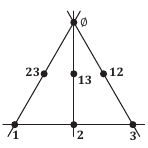}\label{fig:konf1}
}\qquad
\subfigure[]{ \includegraphics[scale=1.4]{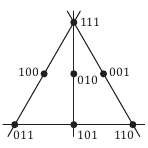}\label{fig:konf2} }
\caption{The configuration $F_4$ for $3$-arrays with no triply covered edges }
\label{fig:konf}
\end{figure}

The fact that $3$-arrays with no triply covered edge are
associated to certain nowhere-zero flows and proper
edge-colourings suggests that they deserve a special attention.
If a $3$-array $\mathcal{M}$ has no triply covered edge, the
flow values of $\chi$ can be regarded as points of the Fano
plane $PG(2,2)$ represented by the standard projective
coordinates from $\mathbb{Z}_2^3-\{0\}$. In this
representation, the points of $PG(2,2)$ are the non-zero
elements of $\mathbb{Z}_2^3$ and the lines of $PG(2,2)$ are the
$3$-element subsets $\{x,y,z\}$ of $\mathbb{Z}_2^3-\{0\}$ such
that $x+y+z=0$. The definition of the characteristic flow
implies that for every vertex $v$ of $G$ the three flow values
meeting at $v$ form a line of the Fano plane. Since every
vertex of $G$ is incident with an edge of each member of
$\mathcal{M}$, one can easily deduce that only four lines of
the Fano plane can occur around a vertex, and these lines form
a point-line configuration in $PG(2,2)$ which is shown in
Fig.~\ref{fig:konf}; we denote this configuration by $F_4$. On
the left-hand side of the figure the points are labelled with
the corresponding colours -- subsets of $\{1,2,3\}$. Based on
this correspondence we will refer to the colouring $\phi$
and the flow $\chi$ as the \emph{Fano colouring} and the
\emph{Fano flow} associated with a $3$-array $\mathcal{M}$. The
colours from $\{\emptyset, 1,2, 3, 12, 13, 23\}$ and the
corresponding elements of $\mathbb{Z}_2^3-\{0\}$ will be used
interchangeably.

\head{2. Core} Another important structure associated with a
$3$-array is its core. The \emph{core} of a $3$-array
$\mathcal{M}=\{M_1, M_2, M_3\}$ of $G$ is the subgraph of $G$
induced by all the edges of $G$ that are not simply covered; we
denote it by $\core(\mathcal{M})$. The core will be called
\emph{optimal} whenever $\mathcal{M}$ is optimal.

The edge set of $\core(\mathcal{M})$ thus coincides with
$E_0(\mathcal{M})\,\cup\,E_{23}(\mathcal{M})$, where
$E_0(\mathcal{M})$ and $E_{23}(\mathcal{M})$ denote the set of
all uncovered edges  and the set of and doubly and triply
covered edges with respect to $\mathcal{M}$, respectively.
If $|E_0(\mathcal{M})|=k$, we say that
$\core(\mathcal{M})$ is a \emph{$k$-core}. It is worth
mentioning that if $G$ is $3$-edge-colourable and $\mathcal{M}$
consists of three disjoint perfect matchings, then
$\core(\mathcal{M})$ is empty. If $G$ is not $3$-edge-colourable, then
every core must be nonempty. Figure~\ref{fig:petersen_core}
shows the Petersen graph endowed with a $3$-array whose core is
the ``outer'' $6$-cycle. The hexagon is in fact an optimal core
of the Petersen graph.


\begin{figure}[h!]
 \centering
 \includegraphics[scale=1.4]{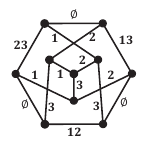}
 \caption{An optimal $3$-array of the Petersen graph}
\label{fig:petersen_core}
\end{figure}

The following proposition, much of which was proved by Steffen
in {\cite[Lemma~2.2]{S2}}, lists the most fundamental
properties of cores. We include the proof for the reader's
convenience.

\begin{proposition}\label{prop:core}
Let $\mathcal{M}=\{M_1, M_2, M_3\}$ be an arbitrary $3$-array
of perfect matchings of a snark $G$. Then the following hold:
\begin{enumerate}[{\rm (i)}]
\item Every component of $\core(\mathcal{M})$ is
    either an even circuit which alternates doubly covered
    and uncovered edges or a subdivision of a cubic graph.
    Moreover, $\core(\mathcal{M})$ is a collection of
    disjoint circuits if and only if $G$ has no triply
    covered edge.

\item Every $2$-valent vertex of $\core(\mathcal{M})$ is
    incident with one doubly covered edge and one uncovered
    edge, while every $3$-valent vertex is incident with
    one triply covered edge and two uncovered edges.
\item $E_{23}(\mathcal{M})$ is a perfect matching of
    $\core(\mathcal{M})$; consequently,
    $|E_0| = |E_2(\mathcal{M})| +2|E_3(\mathcal{M})|$.
\item $G-E_0(\mathcal{M})$ is
    $3$-edge-colourable.
\item If $\mathcal{M}$ is optimal, then every component of
    $\core(\mathcal{M})$ is a simple graph which is either an even circuit of
    length at least $6$ or a subdivision of a cubic graph.
\end{enumerate}
\end{proposition}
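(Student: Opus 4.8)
The engine behind parts (i)--(iii) is a purely local analysis at each vertex. Fix a vertex $v$ with incident edges $e_1,e_2,e_3$. Since each of $M_1,M_2,M_3$ contains exactly one edge at $v$, we have $w(e_1)+w(e_2)+w(e_3)=3$, so the multiset $\{w(e_1),w(e_2),w(e_3)\}$ is one of $(1,1,1)$, $(2,1,0)$, $(3,0,0)$. A vertex lies in $\core(\mathcal{M})$ exactly when it is of the second or third type, and reading off the incident weights proves (ii) at once: a $2$-valent core vertex is of type $(2,1,0)$ and meets one doubly covered and one uncovered edge, while a $3$-valent core vertex is of type $(3,0,0)$ and meets one triply covered and two uncovered edges. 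Part (i) then follows because every core vertex has degree $2$ or $3$: a component with all degrees $2$ is a circuit whose edges, by (ii), alternate doubly covered and uncovered and so has even length, whereas a component with a $3$-valent vertex becomes, after suppressing its $2$-valent vertices, a cubic graph; the ``moreover'' holds since a triply covered edge exists iff a type-$(3,0,0)$ vertex exists. For (iii), (ii) shows each core vertex meets exactly one edge of $E_{23}(\mathcal{M})$, so $E_{23}(\mathcal{M})$ is a perfect matching of the core; using that both ends of a doubly (resp.\ triply) covered edge are of type $(2,1,0)$ (resp.\ $(3,0,0)$), there are $2|E_2|$ vertices of the first type and $2|E_3|$ of the second, and equating the two ways of counting the core degree sum, $2(|E_0|+|E_2|+|E_3|)=4|E_2|+6|E_3|$, gives $|E_0|=|E_2|+2|E_3|$.

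For (iv) I would exhibit an explicit $3$-edge-colouring of $G-E_0(\mathcal{M})$: colour each simply covered edge by the index of the unique matching containing it, each doubly covered edge $e\in M_i\cap M_j$ by $i$, and each triply covered edge arbitrarily. Properness follows from the type analysis. Two simply covered edges are adjacent only at a type-$(1,1,1)$ vertex, where they lie in distinct matchings and receive distinct colours; a doubly covered edge has both ends of type $(2,1,0)$, so in $G-E_0$ it is adjacent only to the simply covered edge at each end, which lies in $M_k$ with $k\notin\{i,j\}$; and a triply covered edge has both ends of type $(3,0,0)$, so in $G-E_0$ it is isolated and creates no conflict.

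The content of (v) is to use optimality to exclude short circuits and non-simplicity, and here the plan is a sequence of exchange arguments that strictly increase the number of covered edges. First, loops require no appeal to optimality: a loop is uncovered and forces its vertex to be of type $(3,0,0)$, accounting for both uncovered edges, so the remaining triply covered edge is a bridge, contradicting $2$-connectivity. For short circuit components I use symmetric differences along the circuit. A length-$2$ component is a digon carrying $d\in M_i\cap M_j$ and a parallel uncovered edge $u$; replacing $d$ by $u$ in $M_i$ is again a perfect matching, covers $u$, and leaves $d$ singly covered. For a length-$4$ component $d_1u_1d_2u_2$ we have $d_1\in M_a\cap M_b$ and $d_2\in M_{a'}\cap M_{b'}$; since two $2$-subsets of $\{1,2,3\}$ meet, some $M_c$ contains both $d_1,d_2$, and $M_c\triangle C$ (with $C$ the $4$-cycle) is a perfect matching covering $u_1,u_2$ while keeping $d_1,d_2$ covered. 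A weight check excludes parallel edges joining vertices of different core-valency, so the only remaining non-simple feature is a digon between two $3$-valent vertices, and when it carries a triply covered edge $t$ and an uncovered edge $g$, the parallel-swap $t\mapsto g$ in one matching settles it. Together these leave only simple components whose circuits have length at least $6$, with a single case outstanding.

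The main obstacle is that last configuration: a digon formed by \emph{two} uncovered edges $f,g$ between $3$-valent core vertices $x,y$, whose incident triply covered edges $t=xx_1$ and $t'=yy_1$ form a $2$-edge-cut $\delta(\{x,y\})=\{t,t'\}$ that appears in all three complementary $2$-factors. No local parallel-swap is available here, since covering $f$ forces $t,t'$ out of a matching and leaves $x_1,y_1$ unmatched, so the repair is inherently global. The plan is to cover $f$ by a perfect matching $M_1''$ obtained from $M_1$ via the alternating cycle $t,f,t'$ closed by an $x_1$--$y_1$ augmenting path inside $G-\{x,y\}$; such a path exists because it is classical (Sch\"onberger) that in a bridgeless cubic graph every edge lies in a perfect matching, so $G-\{x,y\}$ has a perfect matching. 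The delicate point, and the crux of the whole proof of (v), is to select this path so that the matching edges it discards are doubly or triply covered and hence stay covered after the exchange; only then does the flip strictly increase coverage, and controlling exactly which edges are uncovered is where the core structure recorded in (ii)--(iii) has to be exploited.
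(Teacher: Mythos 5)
Your parts (i)--(iv) are correct and essentially identical to the paper's argument: the same local weight analysis (the incident weights at a vertex are $(1,1,1)$, $(2,1,0)$ or $(3,0,0)$), the same conclusion that $E_{23}(\mathcal{M})$ is a perfect matching of the core with the degree count giving $|E_0|=|E_2|+2|E_3|$, and the same explicit colouring of $G-E_0(\mathcal{M})$. For (v), your exclusion of digon and quadrilateral \emph{components} by matching exchanges (replacing $d$ by $u$ in one $M_i$; taking $M_c\triangle C$ for the alternating $4$-cycle after observing that two $2$-subsets of $\{1,2,3\}$ intersect) is exactly the paper's argument in different clothing -- the paper performs the same exchange by recolouring the characteristic colouring and omits the digon case as ``similar''. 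So everything the paper actually writes down, you prove.

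The one genuine gap is the final case of (v), and you have named it yourself: a core component containing two parallel uncovered edges $f,g$ between $3$-valent core vertices $x,y$, with $t=xx_1$ and $t'=yy_1$ triply covered and contained in all three matchings. Your proposed repair -- flipping $M_1$ along an alternating cycle through $t,f,t'$ closed by an $x_1$--$y_1$ path -- is only a plan: the existence of \emph{some} perfect matching through $f$ gives no control over which $M_1$-edges the flip discards, and if even one discarded edge was covered by $M_1$ alone the exchange fails to decrease $|E_0|$. You explicitly defer this selection argument, so as written the simplicity assertion of (v) for the subdivision-type components is not established. Two remarks to calibrate this. First, the paper's own proof does not treat this case either: it declares that excluding digon and quadrilateral components suffices, which disposes of the circuit components (a connected $2$-regular graph of length at least $3$ is automatically simple) but silently skips parallel edges inside components with $3$-valent vertices; your case analysis (loops via $2$-connectedness, mixed-valency parallels via weights, a triply-covered/uncovered parallel pair via a local swap) is more careful than the source on this point. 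Second, the missing case really does require a non-local argument of the kind you sketch, so you should either complete the selection of the alternating path (this is where the structure of $E_0$ recorded in (ii)--(iii) must be used, e.g.\ by working in the suppressed graph obtained by replacing $x,y,f,g,t,t'$ with a single edge $x_1y_1$) or state explicitly that this subcase of the simplicity claim is left open.
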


\begin{proof}
Set $H=\core(\mathcal{M})$. We claim that every vertex of $H$
has valency at least $2$. Indeed, if a vertex $v$ of $G$ is
incident with a triply covered edge, then the other two edges
incident with $v$ are uncovered. So, in this case, $v$ has
degree $3$ in $H$. If $v$ is incident with a doubly covered
edge, then exactly one of the remaining edges is simply covered
and the other one is uncovered. Thus $v$ has valency $2$ in
$H$. If all three edges incident with $v$ are simply covered,
then $v$ does not belong to $H$. It follows that every vertex
in $H$ has degree at least $2$, and also that the set
$E_{23}(\mathcal{M})$ of edges forms a perfect matching in $H$.
Statements (i)--(iii) now follow immediately.

To prove (iv), assign colour $i$ to every edge of $G$ that is
simply covered and belongs to $M_i$. If an edge of $G$ is
doubly covered, then both end-vertices are incident in $G$ with
one uncovered edge and one simply covered edge. It follows that
we can colour such an edge with the colour $c\in \{1,2,3\}$
that does not occur on the simply covered edges adjacent to it.
Finally, if an edge of $G$ is triply covered, both end-vertices
are incident with two uncovered edges. Thus we can colour such
an edge with any colour from $\{1,2,3\}$. In this manner we
have clearly produced a $3$-edge-colouring of
$G-E_0(\mathcal{M})$.

To prove (v) it is enough to argue that neither a digon nor a
$4$-cycle can occur as components of the core of an optimal
$3$-array. If $\core(\mathcal{M})$ contains a $4$-cycle
$Q=(e_0e_1e_2e_3)$, then two edges of $Q$ are uncovered, say
$e_0$ and $e_2$, and the other two are doubly covered. Clearly,
one of the three perfect matchings covers both $e_1$ and $e_3$.
Without loss of generality we may assume that it is $M_1$. The
characteristic colouring $\phi$ then satisfies
$\phi(e_1)=1i$ and $\phi(e_3)=1j$ for some
$i,j\in\{2,3\}$. We can modify $\phi$ to $\phi'$ by
setting $\phi'(e_0)=1$, $\phi'(e_2)=1$,
$\phi'(e_1)=i$, and $\phi'(e_0)=j$, and leaving all the
remaining edges $\phi'(e)=\phi(e)$. However, $\phi'$
now clearly determines a $3$-array with fewer uncovered edges,
contradicting the minimality of~$\mathcal{M}$. This proves that
$\core(\mathcal{M})$ does not contain a quadrilateral. The
argument that $\core(\mathcal{M})$ does not contain a digon is
similar and therefore is omitted.
\end{proof}

We say that a $3$-array $\mathcal{M}$ of $G$ has a
\emph{regular core} if each component of $\core(\mathcal{M})$
is a circuit; otherwise the core is called \emph{irregular}. By
Proposition~\ref{prop:core}(ii), a core is regular if and only
if $G$ has no triply covered edge. (Steffen~\cite{S2} calls
such a core \emph{cyclic}, but we believe that the letter term
is somewhat misleading as it might suggest that the core is a
single $k$-cycle for some $k$.) The well-known conjecture of
Fan and Raspaud \cite{FR} states that every bridgeless cubic
graph has three perfect matchings $M_1$, $M_2$, and $M_3$ with
$M_1\cap M_2\cap M_3=\emptyset$. Equivalently, the conjecture
states that  every bridgeless cubic graph has a $3$-array with
a regular core. The conjecture is trivially true for
$3$-edge-colourable graphs. M\'a\v cajov\'a and \v Skoviera
\cite{MS-Comb} proved this conjecture to be true for cubic
graphs with oddness $2$. We emphasise that neither the
conjecture nor the proved facts suggest anything about optimal
cores.

The following theorem characterises snarks with minimal colour
defect. The lower bound $3$ for the defect of a snark is due to
Steffen~\cite[Corollary~2.5]{S2}.

\begin{theorem}\label{thm:main}
Every snark $G$ has $\df{G}\ge 3$. Furthermore, the following
three statements are equivalent for any cubic graph $G$.
\begin{enumerate}[{\rm(i)}]
\item $\df{G}=3$.
\item The core of any optimal $3$-array of $G$ is a
    $6$-cycle.
\item $G$ contains an induced $6$-cycle $C$ such that the
    subgraph $G-E(C)$ admits a proper $3$-edge-colouring
    under which the six edges of $\delta(C)$ receive
    colours $1,1,2,2,3,3$ or $1,2,2,3,3,1$ with respect to
    a fixed cyclic ordering induced by an orientation of
    $C$.
\end{enumerate}
\end{theorem}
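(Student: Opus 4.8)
The plan is to prove the equivalence through the cycle of implications (ii)$\Rightarrow$(i), (i)$\Rightarrow$(ii), (iii)$\Rightarrow$(i), (i)$\Rightarrow$(iii); the bound $\df G\ge 3$ I would quote from Steffen~\cite[Corollary~2.5]{S2}, although it also follows from Proposition~\ref{prop:core}(v): a nonempty optimal core consists of simple even circuits of length at least $6$ and of simple subdivisions of cubic graphs, and each such component carries at least three uncovered edges. The implication (ii)$\Rightarrow$(i) is then immediate, for a $6$-cycle core alternates three uncovered and three doubly covered edges by Proposition~\ref{prop:core}(i), so an optimal array leaves exactly three edges uncovered and $\df G=3$.

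For (i)$\Rightarrow$(ii) I would argue by a counting over components of an optimal core. Fix an optimal array $\mathcal M$ with $|E_0(\mathcal M)|=3$. By Proposition~\ref{prop:core}(v) each component of $\core(\mathcal M)$ is a simple even circuit of length at least $6$ or a simple subdivision of a cubic graph. A circuit of length $2k$ contributes $k\ge 3$ uncovered edges, with equality precisely when it is a $6$-cycle. For an irregular component I would use that $E_{23}(\mathcal M)$ is a perfect matching of the core (Proposition~\ref{prop:core}(iii)) which pairs $2$-valent vertices with $2$-valent ones through doubly covered edges and $3$-valent with $3$-valent through triply covered edges; hence the numbers of $2$-valent and of $3$-valent vertices are both even, and a short degree count together with simplicity shows that such a component carries at least four uncovered edges. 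Since the total number of uncovered edges is $3$, there can be only one component, and it must be a $6$-cycle.

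For (iii)$\Rightarrow$(i) I would give an explicit construction. Write $C=v_1v_2\cdots v_6$ with edges $c_i=v_iv_{i+1}$ and boundary edges $d_i$ at $v_i$, and let $\psi$ be a proper $3$-edge-colouring of $G-E(C)$ realising, say, the pattern $\psi(d_1),\dots,\psi(d_6)=1,1,2,2,3,3$. For each colour $i$ the class $\psi^{-1}(i)$ covers every vertex outside $C$ and exactly the two \emph{adjacent} vertices of $C$ whose boundary edges are coloured $i$; the four remaining vertices of $C$ are consecutive, so they are covered by two alternate edges of $C$, and adding these to $\psi^{-1}(i)$ produces a perfect matching $M_i$ of $G$. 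A direct check shows that in the resulting array each of $c_1,c_3,c_5$ becomes doubly covered, each of $c_2,c_4,c_6$ stays uncovered, and every other edge is simply covered; hence $\df G\le 3$, which with the lower bound gives $\df G=3$. The pattern $1,2,2,3,3,1$ is handled by the same argument after a rotation of the indices, the adjacency of the covered pair being exactly what makes the four uncovered vertices consecutive.

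The implication (i)$\Rightarrow$(iii) is the crux and I expect it to be the main obstacle. Beginning from an optimal array whose core is a $6$-cycle $C$ (supplied by (i)$\Rightarrow$(ii)), I would read the boundary pattern from the characteristic colouring: each $d_i$ is simply covered, so the colour forced at $v_i$ is the one missing from the doubly covered core edge at $v_i$, and since each perfect matching meets $v_i$ in exactly one edge, the two boundary edges at the ends of a common doubly covered edge receive the same colour. The pattern is therefore $(X,X,Y,Y,Z,Z)$, where $X,Y,Z$ are the colours complementary to the colour-pairs of $c_1,c_3,c_5$. If $X,Y,Z$ are pairwise distinct, then $C$ has no chord -- a chord would serve as the boundary edge of two of its endpoints and force two of $X,Y,Z$ to agree -- so $C$ is induced and the pattern equals $1,1,2,2,3,3$ after relabelling, giving exactly (iii). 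The whole difficulty thus reduces to arranging that the three doubly covered edges of the core carry distinct colour-pairs. My approach would be to swap the array along a two-coloured alternating subgraph of $M_p\triangle M_q$: such a swap leaves every edge's covering multiplicity unchanged, hence preserves the core and optimality, while transposing $p$ and $q$ in the colour-pairs of those $c_j$ lying on the chosen component. The hard part is to steer these swaps so that all three colour-pairs become distinct at once rather than merely relocating a coincidence, and carrying this out is likely to need the finer analysis of Fano flows around the hexagonal core developed later in the paper.
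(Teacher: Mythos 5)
Most of your proposal tracks the paper. The lower bound, (ii)$\Rightarrow$(i), and your component count for (i)$\Rightarrow$(ii) are in substance the paper's own arguments, and your explicit construction of the matchings $M_i$ (each colour class $\psi^{-1}(i)$ augmented by two alternate edges of $C$) for (iii)$\Rightarrow$(i) is a correct expansion of what the paper dismisses as trivial -- with the caveat, shared with the paper, that invoking the lower bound $\df{G}\ge 3$ at that point presupposes that $G$ is a snark.

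The genuine gap is exactly the step you flag as the crux of (i)$\Rightarrow$(iii): showing that the three doubly covered edges of a hexagonal core carry three \emph{distinct} colour-pairs. Your plan -- repeatedly swapping the array along components of $M_p\triangle M_q$ until the coincidence disappears -- is left incomplete, and steering such swaps so that all three pairs become distinct simultaneously is not something you establish. But no swapping is needed: the claim already holds for the array you start with. Since $G$ is a snark, any two of $M_1,M_2,M_3$ intersect (if $M_i\cap M_j=\emptyset$, the remaining edges would form a third perfect matching and $G$ would be $3$-edge-colourable). Hence each of the three pairs $\{M_i,M_j\}$ contributes at least one doubly covered edge; as the hexagonal core has exactly three doubly covered edges and no triply covered ones, each pair contributes exactly one, so the colour-pairs are $12$, $13$, $23$ in some order by pigeonhole. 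That is the paper's one-line argument -- no Fano-flow analysis enters at this point. With distinctness in hand, your $(X,X,Y,Y,Z,Z)$ pattern and your no-chord argument go through, except that you should also exclude an edge of $G$ parallel to an edge of $C$ (the paper permits multiple edges and disposes of this degenerate chord by an explicit recolouring that would make $G$ $3$-edge-colourable).
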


\begin{proof}
Let $G$ be a snark. First observe that any two perfect
matchings in $G$ intersect. Indeed, if there were two disjoint
perfect matchings in $G$, the set of remaining edges would be a
third perfect matching, implying that $G$ is
$3$-edge-colourable.

Now consider an optimal $3$-array
$\mathcal{M}=\{M_1,M_2,M_3\}$ of perfect matchings of $G$, and
let $H$ be the core of~$\mathcal{M}$. Since $G$ is a snark, we
have $\df{G}>0$, so $G$ contains at least one uncovered edge
and at least one multiply covered edge. To prove that
$\df{G}\ge 3$ we consider two cases.

\medskip\noindent
Case 1.\emph{ $G$ contains a triply covered edge $e$.}
Since $G$ is bridgeless, each end-vertex of $e$ is
incident with two distinct uncovered edges. By
Proposition~\ref{prop:core}(v), these four edges are all
distinct, implying that $\df{G}\ge 4$.

\medskip\noindent
Case 2. \emph{$G$ contains no triply covered edge.}
Proposition~\ref{prop:core}(i) and (v) now implies that each component
of $H$ is a circuit of length at least $6$, which means that
there are at least three uncovered edges. Hence, $\df{G}\ge 3$
in this case.

\medskip
So far we have shown that for every snark we have $\df{G}\ge
3$. We now finish the proof by proving that the statements
(i)-(iii) are equivalent. Let $G$ be an arbitrary cubic graph.
Assume that $\df{G}=3$. If we combine (ii) and (v) of
Proposition~\ref{prop:core}, we can conclude that $G$ contains
no triply covered edge. Proposition~\ref{prop:core}(i) now
tells us that each component of $H$ is an even circuit that
alternates uncovered and doubly covered edges. By
Proposition~\ref{prop:core}(v), each such circuit has has
length at least $6$, so $H$ must be a single hexagon. This
establishes the implication (i) $\Rightarrow$ (ii).

\medskip
(ii) $\Rightarrow$ (iii): Assume that the core of an optimal
$3$-array $\mathcal{M}$ of $G$ is a $6$-cycle $C$. Hence, $G$
is a snark. By Proposition~\ref{prop:core}(i) and (v),
$G$ has no triply covered edge. It follows that on $C$ the uncovered edges
and the doubly covered edges alternate and that the edges
leaving $C$ are simply covered.
Since $G$ is a snark,
any two perfect matchings intersect, which implies that the three
doubly covered edges receive colours $12$, $13$, and $23$ in
some order. The colours of doubly covered edges determine the
order of colours on the edges leaving $C$ uniquely, and it is
easy to see that they are as stated. Consequently, the
resulting structure is as illustrated in Figure~\ref{fig:core3}
up to permutation of the set $\{1,2,3\}$. In particular,
$C=(e_0e_1\ldots e_5)$, and the edges leaving $C$ are $f_0,
f_1, \ldots, f_5$, where each $f_i$ is adjacent to $e_{i-1}$
and $e_i$ with indices reduced modulo $6$.

It remains to show that $C$ is an induced $6$-cycle. Suppose
not. Then $C$ has a chord, necessarily a simply covered edge.
Without loss of generality we can assume that under the Fano
colouring $\phi_{\mathcal{M}}$ corresponding to $\mathcal{M}$
the chord has colour $1$. If we adopt the notation of
Figure~\ref{fig:core3}, the latter assumption means that
$f_4=f_5$. However, we can now extend the $3$-edge-colouring of
$G-E(C)$ determined by $\mathcal{M}$ to a $3$-edge-colouring
$\psi$ of the entire~$G$. Indeed, it is sufficient to set
$\psi(e_1)=1$, which further forces $\psi(e_0)=2$,
$\psi(e_2)=3$, and $\psi(e_3)=\psi(e_5)=1$. Thus we can define
$\psi(e_4)=2$ and $\psi(f_4)=3$ and let
$\psi(x)=\phi_{\mathcal{M}}(x)$ for all the remaining edges of
$G$. Clearly, this is a proper $3$-edge-colouring of $G$, which
is a contradiction.

The implication (iii) $\Rightarrow$ (i) is trivial.
\end{proof}

\begin{figure}[h]
\includegraphics[scale=1.5]{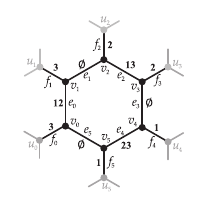}
\caption{The core for $\mathrm{df}=3$ and its vicinity}
\label{fig:core3}
\end{figure}

If $G$ is a snark with colouring defect $3$, then by
Theorem~\ref{thm:main}~(iii) it contains an optimal array
$\mathcal{M}=\{M_1,M_2,M_3\}$ whose core is an induced
$6$-cycle. Such a core will be referred to as a \emph{hexagonal
core} of $G$.

Consider an arbitrary induced $6$-cycle $Q=(q_0q_1 \dots q_5)$
in a snark $G$ with $\df{G}=3$, and let $r_1, \ldots, r_5$ be
the edges of $\delta(Q)$, where each $r_i$ is adjacent to
$q_{i-1}$ and $q_i$, with indices reduced modulo $6$. In
general, $Q$ need not constitute the core of any optimal
$3$-array for~$G$. If it does, we say that $Q$ is a \emph{core
hexagon}. In such a case, there is an optimal array
$\mathcal{M}$ for $G$ such that $E_0(\mathcal{M})$ consists of
three independent edges of $Q$. This can happen in two ways,
either $E_0(\mathcal{M})=\{q_0,q_2,q_4\}$ or
$E_0(\mathcal{M})=\{q_1,q_3,q_5\}$.

Assume that $E_0(\mathcal{M})=\{q_1,q_3,q_5\}$. Let
$\phi=\phi_{\mathcal{M}}$ be the Fano colouring of $G$
induced by $\mathcal{M}$. Since permuting the indices of the
perfect matchings $M_1$, $M_2$, and $M_3$ of $\mathcal{M}$ does
not essentially change the $3$-array, we can assume, without
loss of generality, that $\phi(q_0)=12$, $\phi(q_2)=13$,
and $\phi(q_4)=23$. In other words, if
$E_0(\mathcal{M})=\{q_1,q_3,q_5\}$, we can assume that around
$Q$ the Fano colouring $\phi$ is as shown in
Figure~\ref{fig:core3}, with each $q_i$ being identified
with~$e_i$. Similarly, if $E_0(\mathcal{M})=\{q_0,q_2,q_4\}$,
we can assume that the values of $\phi$ around $Q$ are
rotated one step counter-clockwise with respect to
Figure~\ref{fig:core3}. The conclusion is  that if $Q$ is a
core hexagon, then
the cyclic sequence
$(\phi(r_0),\phi(r_1),\ldots,\phi(r_5))$ of colours
leaving $Q$ is either $(3,3,2,2,1,1)$ or $(3,2,2,1,1,3)$,
possibly after permuting the set $\{1,2,3\}$.

The previous considerations imply that in order to decide
whether $Q$ is, or is not, a core hexagon, it is sufficient to
check whether at least one of the assignments $(3,3,2,2,1,1)$
or $(3,2,2,1,1,3)$ of colours to the edges of $\delta(Q)$
extends to a proper $3$-edge-colouring of $G-E(Q)$. If both
assignments extend, then $Q$ is the core of two optimal
$3$-arrays with disjoint sets of uncovered edges. In this case
we say that $Q$ is a \emph{double-core hexagon}. If only one of
the assignments extends, then we say that $Q$ is a
\emph{single-core hexagon}.

Observe that if a snark contains a double-core hexagon, then
the two corresponding $3$-arrays constitute a Fulkerson cover
of $G$, that is, a collection of six perfect matchings that
cover each edge precisely twice. The distribution of various
types of hexagons in snarks will be discussed in
Section~\ref{sec:comp}.

\section{Reduction to nontrivial snarks}\label{sec:reduction}
\noindent{}Let $G$ be a snark containing a $k$-edge-cut $R$
with $k\ge 2$, which decomposes $G$ into a junction $H * K$ of
two $k$-poles $H$ and $K$. If one of them, say $H$, is
uncolourable, we can extend $H$ to a snark $G'$ of order not
exceeding that of $G$ by joining $H$  with a suitable $k$-pole
$L$ of order $|L|\le |K|$ (possibly $L=K$). Following
\cite{NS-decred}, we call $G'$ a \emph{$k$-reduction} of $G$,
and say that $G$ can be \emph{reduced} to $G'$ along $R$.
With a slight abuse of terminology, we also say that $G'$
arises from $G$ by a \emph{reduction} with respect to $R$. More
generally, we say that a snark $G$ can be \emph{reduced} to a
snark $G'$ if there exists a sequence $G=G_0, G_1, \ldots,
G_t=G'$ of snarks such that for each $i\in\{0,\ldots, t-1\}$
the snark $G_i$ can be reduced to $G_{i+1}$ along some edge
cut. A reduction $G'$ of $G$ is said to be \emph{proper} if
$|G'| < |G|$. A reduction $G' = H*L$ of $G=H * K$ where $L$ has
the minimum possible order is called a \emph{completion} of $H$
to a snark. Observe that if $G'=H*L = \bar{H}$ is a completion
of $H$, then $|L|=0$ or $1$ depending on whether $k$ is even or
odd, respectively. Moreover, if $k\in\{2,3\}$, then $H$ has a
unique completion to a snark  up to isomorphism.

The aim of the next two sections is to prove that every snark
with defect $3$ can be reduced to a nontrivial snark with
defect $3$, or else it has a very specific structure. In this
section we gather auxiliary results needed for the proofs of
our main results, which will be presented in
Section~\ref{sec:main}.

A short reflection reveals that a reduction to a nontrivial
snark of defect $3$ is clearly not possible when the snark in
question contains a triangle whose contraction produces a snark
with defect greater than $3$. A triangle with this property
will be called \emph{essential}. We show that the existence of
an essential triangle is the only obstruction that prevents a
snark with defect $3$ from reduction, and that, in such a case,
there is only one essential triangle in the graph. As we shall
see later, an infinite family of snarks containing essential
triangles indeed exists (see Theorem~\ref{thm:df_n3}). One such
graph can be created from the graph in
Figure~\ref{fig:snark34_43} by inflating the central vertex $z$
to a triangle.

Throughout this section we use the following notation: $G$ is a
snark with $\df{G}=3$, $\mathcal{M}=\{M_1,M_3,M_3\}$ is an
optimal $3$-array of $G$ whose core $C$ is a 6-cycle, $\chi$ is
the characteristic flow for $\mathcal{M}$, and $\phi$ is the
associated Fano colouring. We further assume, without loss of
generality, that the Fano colouring and the names of edges in
the vicinity of $C$ are those as in Figure~\ref{fig:core3}.

\begin{lemma}\label{lem:essential}
Let $G$ be a snark with defect $3$, let $C$ be a hexagonal core
of $G$, and let $T$ be an arbitrary triangle in $G$. The
following statements hold.
\begin{itemize}
\item[{\rm (i)}] If $C\cap T=\emptyset$, then $T$ is not
    essential.

\item[{\rm (ii)}]  If $C\cap T\ne\emptyset$, then $C\cap T$
    consists of a single uncovered edge.

\item[{\rm (iii)}] Every hexagonal core intersects at most
    one triangle.

\item[{\rm (iv)}] $G$ has at most one essential triangle.
 \end{itemize}
\end{lemma}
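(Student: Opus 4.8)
The plan is to establish the four statements in the order (i), (ii), (iv), (iii), since (iv) will fall out of (i) together with (iii), while (iii) carries the real weight. For (i), suppose $T$ is a triangle with $C\cap T=\emptyset$. Then every edge of $T$ is simply covered, so under the Fano colouring $\phi$ the three edges of $T$ receive three distinct colours and the three edges leaving $T$ receive three distinct colours as well. Contracting $T$ to a single vertex $t$ therefore keeps $\phi$ proper at $t$, so $\mathcal M$ descends to a $3$-array of $G/T$ with exactly the same uncovered edges; these all lie on $C$, which is untouched by the contraction, so $C$ survives as the core and $\df{G/T}\le 3$. Since contraction of a triangle carries proper $3$-edge-colourings back and forth (given distinct colours on the three edges at $t$ one recovers the unique proper colouring of the triangle) and preserves bridgelessness, $G/T$ is again a snark, whence $\df{G/T}\ge 3$ and thus $\df{G/T}=3$; so $T$ is not essential. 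I would first record this two-way colour-extension fact, as it is reused below.

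For (ii) I would exploit that $C$ is induced. A triangle cannot share two edges with $C$, since two shared edges are consecutive and force the third edge of $T$ to be a chord of the induced cycle $C$; and $T$ cannot meet $C$ in a single vertex, because a common vertex has degree two in both $T$ and $C$ but degree three in $G$, which already forces a common edge. Hence $C\cap T$ is exactly one edge $e$. If $e$ were doubly covered, its two endpoints on $C$ would send their leaving edges to the common apex $x$ of $T$; reading off the Fano colouring around $C$ as in Figure~\ref{fig:core3}, these two leaving edges carry the \emph{same} colour, so two edges of equal colour meet at $x$. As $\df{G}=3$ there is no triply covered edge, so $\phi$ is proper by Proposition~\ref{prop:notriply}, a contradiction. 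Therefore $e$ is uncovered, proving (ii).

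Statement (iv) is then a short corollary. By (i) applied to an arbitrary hexagonal core, any triangle disjoint from some hexagonal core is not essential; equivalently, an essential triangle meets \emph{every} hexagonal core, in particular the fixed core $C$. By (iii), $C$ meets at most one triangle, so two essential triangles must coincide, and $G$ has at most one essential triangle.

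The heart of the lemma is (iii), which is where I expect the main obstacle. Assume for contradiction that two uncovered edges of $C$, say $e_1$ and $e_3$, lie on triangles $T_1,T_3$ with apexes $x_1,x_3$; by (ii) this is the only way a triangle can meet $C$. Writing $g_1,g_3$ for the third edges at $x_1,x_3$ and recording $\phi$ around $C$, the two triangles together with the doubly covered edge $e_2=v_2v_3$ joining them form a colourable $4$-pole with connectors $\{e_0,g_1\}$ and $\{e_4,g_3\}$, and one checks this $4$-pole is colour-equivalent to a single edge. The natural plan is to reduce $G$ along the induced $4$-edge-cut $\{e_0,e_4,g_1,g_3\}$ (and, in the symmetric case where all three uncovered edges lie on triangles, along the $3$-edge-cut $\{g_1,g_3,g_5\}$) and to contradict the resulting core via Proposition~\ref{prop:core}(v). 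The difficulty, which I expect to absorb most of the work, is that $\mathcal M$ does \emph{not} descend naively through this reduction: at each new vertex one colour of $\{1,2,3\}$ is forced to occur three times, a manifestation of the rigidity of $\mathcal M$ near the uncovered edges, and indeed a direct local check shows that with the boundary coverage fixed there is \emph{no} freedom to cover $e_1$ or $e_3$, and that the boundary colours inherited by the reduced graph do not fit a quadrilateral core. Consequently the optimal array must be rebuilt on the smaller snark rather than transported edge by edge, using the apex edges $g_1,g_3$ to carry the required recolouring across $e_2$; controlling this recolouring, and passing to a minimal counterexample so that the reduced defect-$3$ snark again violates the statement, is the delicate step where the proof genuinely lives.
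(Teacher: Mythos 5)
Your treatments of (i), (ii), and (iv) are correct and essentially the paper's: (i) is the same contraction argument, and your apex-vertex version of the Kirchhoff argument in (ii) is equivalent to the paper's argument across the cut $\delta(T)$ (the two edges of $T$ meeting at the apex would both receive colour $3$, which is the same contradiction read at a vertex instead of at a cut). The problem is (iii), which you correctly identify as carrying the real weight and then do not prove. Your plan --- excise the two triangles together with $e_2$ along the $4$-edge-cut $\{e_0,e_4,g_1,g_3\}$, reduce, and contradict Proposition~\ref{prop:core}(v) in the smaller graph --- stalls exactly where you say it does: $\mathcal M$ does not transport across that cut, and ``rebuilding the optimal array on the smaller snark'' together with ``passing to a minimal counterexample'' is a placeholder rather than an argument. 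The induction is not even set up (minimal with respect to what order? why would the reduced defect-$3$ snark still contain a hexagonal core meeting two triangles, rather than simply being a smaller snark to which (iii) applies vacuously?). As written, (iii) is unproven, and (iv), which you correctly derive from (i) and (iii), inherits the gap.

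The missing step is far shorter than the machinery you anticipate, and it avoids transporting $\mathcal M$ altogether. Contract \emph{both} triangles to vertices. The graph $G'=G/(T_1\cup T_3)$ is again a snark (triangle contraction preserves uncolourability and $2$-connectivity), and the hexagon $C$ collapses to a quadrilateral $C'$ formed by the four edges of $C$ not lying on the triangles. Every edge of $G'-V(C')$ is simply covered by $\mathcal M$, so $G'-V(C')$ is $3$-edge-colourable; hence $C'$ is a non-removable $4$-cycle in the snark $G'$, contradicting Lemma~\ref{lem:odstr4}. No recolouring, no reduction along a $4$-cut, and no minimal counterexample are needed; the only facts used are that short circuits in snarks are removable and that the edges off the core are simply covered.
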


\begin{proof}
Let $G$ be a snark with $\df{G}=3$, let $C$ be the core of an
optimal $3$-array $\mathcal{M}$ of~$G$, and let $T$ be an
arbitrary triangle in~$G$. Clearly, $G/T$ is a snark. We claim
that if $C\cap T=\emptyset$, then $T$ is not essential. Indeed,
if $C\cap T=\emptyset$, then $\delta(T)$ consists of simply
covered edges, and by the Kirchhoff law for the characteristic
flow these edges belong to three distinct members of
$\mathcal{M}$. It follows that $\mathcal{M}$ induces a
$3$-array $\mathcal{M}'$ of $G/T$ with $\core(\mathcal{M}')=C$.
Hence $\df{G/T}=3$, which means that the triangle $T$ is not
essential. This proves (i).

Now assume that $C\cap T\ne\emptyset$. Obviously, $C\cap T$
consists of a single edge $e$ because $C$ is an induced
$6$-cycle, by Theorem~\ref{thm:main}. If $e$ was doubly
covered, then $C\cap\delta(T)$ would consist of two uncovered
edges, which in turn would violate the Kirchhoff law. Therefore
$e$ is uncovered, and (ii) is proved. (A typical triangle along
with the associated Fano colouring is illustrated in
Figure~\ref{fig:essential3}.)

\begin{figure}[h!]
 \centering
\includegraphics{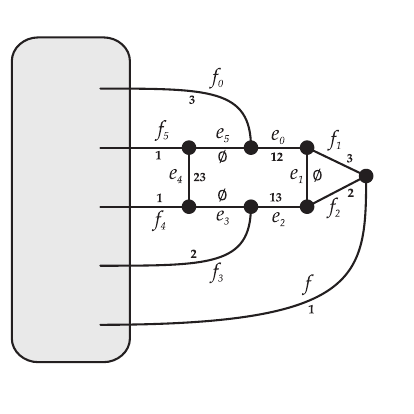}
 \caption{An essential triangle intersected by a hexagonal core}
\label{fig:essential3}
\end{figure}

Next we prove (iii). Suppose to the contrary that
$C=(e_0e_1\ldots e_5)$ intersects two distinct triangles $T_1$
and $T_2$. Adopting the notation of Figure~\ref{fig:core3}, we
can further assume that $T_1$ contains $e_1$ while $T_2$
contains $e_5$, both edges being uncovered. The remaining
uncovered edge $e_3$ may, or may not, belong to a triangle.
Clearly, $T_1=(e_1f_1f_2)$ and $T_2=(e_5f_5f_0)$.
Let us contract each of $T_1$ and $T_2$ to a vertex thereby
producing a cubic graph~$G'=G/(T_1\cup T_2)$. Note that $G'$ is
a snark and $C'=(e_0e_2e_3e_4)$ is a quadrilateral in~$G'$.
Moreover, $G'-V(C')$ is $3$-edge-colourable which means that
$C'$ is non-removable. However, this fact contradicts
Lemma~\ref{lem:odstr4} and establishes (iii).

Assume that $G$ has an essential triangle, and let $T$ be any
of them. If $C$ is an arbitrary hexagonal core, then $C$
intersects all essential triangles, according to (i); in
particular, $C$ intersects $T$. However, (iii) implies that $T$
is the only triangle intersected by $C$. Therefore, $T$ is the
only essential triangle of $G$, which proves (iv). \end{proof}

\begin{lemma}\label{lem:2-cuts}
Let $G$ be a snark with $\df{G}=3$. If $G$ contains a $2$-edge
cut $R$, then the hexagonal core $C$ of any optimal array is
disjoint with $R$. Moreover, $C$ is inherited into the snark
$G'$ with $\df{G'}=3$ arising from the reduction of $G$ with
respect to $R$.
\end{lemma}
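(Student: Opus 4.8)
The plan is to decompose $G$ along the cut and to track the characteristic flow across it. Write $R=\{a,b\}$ and $G=H*K$, the junction of the two $2$-poles obtained by cutting $R$. Because $\df{G}=3$, Theorem~\ref{thm:main} (through Proposition~\ref{prop:core}) guarantees that $G$ has no triply covered edge, so by Proposition~\ref{prop:notriply} the characteristic flow $\chi$ is a nowhere-zero $\mathbb{Z}_2^3$-flow; applying Kirchhoff's law to the edge cut $R$ gives $\chi(a)+\chi(b)=0$, hence $\chi(a)=\chi(b)=:v\neq 0$. The single technical device I would isolate first is this: if we complete one of the sides, say $H$, to $\bar H$ by joining its two free ends into a new edge $a'$ and set $\chi'(a')=v$, then $\chi'$ together with the restriction of $\chi$ to $H$ is again the characteristic flow of a $3$-array of $\bar H$. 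This holds because the two boundary vertices keep exactly the same triple of incident flow values, so the local condition of Proposition~\ref{prop:notriply} (equivalently, that these values form a line of $F_4$) is preserved. Consequently the uncovered edges of this induced array on $\bar H$ are precisely those edges of $H$ with $\chi=(1,1,1)$, together with $a'$ when $v=(1,1,1)$.

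With this in hand I would prove $C\cap R=\emptyset$. Since $R$ is an edge cut and $C$ is a circuit, $C$ meets $R$ in an even number of edges, so either $C\cap R=\emptyset$ or $\{a,b\}\subseteq E(C)$; I must exclude the latter. If $a,b\in E(C)$, then both are core edges, hence each is either uncovered (flow value $(1,1,1)$) or doubly covered (a \emph{distinct} unit vector, by Proposition~\ref{prop:core}(iii) and the description of the hexagonal core). As $\chi(a)=\chi(b)$ and the three doubly covered edges carry three distinct values, $a$ and $b$ cannot both be doubly covered, nor can one be doubly covered and the other uncovered; so both are uncovered, $v=(1,1,1)$, and $\{a,b\}$ are two of the three uncovered edges of the core. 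Removing $R$ splits the core, leaving the third uncovered edge on exactly one side. By the device above, the completion of either side carries an induced $3$-array whose number of uncovered edges is at most two (the merged edge $a'$, plus possibly the single remaining uncovered edge of the core). Since $G$ is a snark, at least one of $H,K$ is uncolourable (otherwise the Parity Lemma would let us combine colourings of the two sides into one of $G$), and its completion is a snark; that snark would then have defect at most $2$, contradicting Theorem~\ref{thm:main}, by which every snark has defect at least $3$. Hence $a,b\notin E(C)$.

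Finally I would deduce the inheritance statement. Since $C$ is now disjoint from $R$, it lies entirely inside one side, and the cut edges $a,b$ are non-core, hence simply covered, so $v$ is a weight-one value, $v\neq(1,1,1)$. Completing the side not containing $C$ then produces, by the device above, an induced $3$-array with no uncovered edge at all, i.e.\ a proper $3$-edge-colouring; thus that side is colourable, which forces the side containing $C$ to be the uncolourable one and therefore the one retained by the reduction $G'$. On $G'$ the induced array has uncovered edges exactly the three uncovered edges of $C$ (the merged edge, being simply covered, contributes nothing), and its core is exactly $C$; hence $\df{G'}\le 3$, and with Theorem~\ref{thm:main} equality holds and $C$ is a hexagonal core of $G'$.

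The step I expect to be the main obstacle is the bookkeeping that makes the first paragraph's device rigorous: one must verify that identifying the two free ends of a $2$-pole and assigning the common value $v$ to the new edge really yields the characteristic flow of a genuine $3$-array on the completion, and that ``uncovered'', ``simply covered'', ``doubly covered'' and ``core'' all transfer verbatim across the junction. Everything else reduces to the value analysis on the six edges of $C$ and to counting uncovered edges, which is routine once the device is established; a minor point to check in passing is that each side of a $2$-edge-cut in a $2$-connected cubic graph contains a circuit, so that the completions are legitimate snarks, and this follows from a direct edge count.
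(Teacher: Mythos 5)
Your proof is correct and takes essentially the same route as the paper's: decompose $G$ along $R$, use Kirchhoff's law for the characteristic flow to show that if $R\subseteq C$ both cut edges would be uncovered, and transfer the $3$-array to the completions of the two sides. The only cosmetic difference is that you close the case $R\subseteq C$ by counting uncovered edges in the induced arrays and invoking the fact that every snark has defect at least $3$, whereas the paper explicitly extends the colourings of both completions to a $3$-edge-colouring of $G$; both contradictions rest on the same observations.
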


\begin{proof}
Let $R$ be a $2$-edge-cut of $G$ whose removal leaves
components $H$ and~$K$. Consider a $3$-array $\mathcal{M}$ for
$G$ whose core is a $6$-cycle $C$ of $G$. We show that $C$ is
wholly contained in one of the components of $G-R$. Suppose
not. Then $C$ intersects $R$, and hence $R\subseteq C$.
Kirchhoff's law for the characteristic flow implies that the
edges of $R$ must have the same colour, which means that both
edges are uncovered. Adopting the notation for $C$ in
accordance with Figure~\ref{fig:core3} we may assume that
$R=\{e_3,e_5\}$. It follows that the edges $e_0$, $e_1$, and
$e_2$ belong to one of the components, say~$H$, and the edge
$e_4$ belongs to the other component. The completion $\bar H$
of $H$ contains a $4$-cycle $(e_0e_1e_2f)$, where $f$ is the
edge resulting from the extension of $H$ to $\bar H$. It is
easy to see that if $f$ is assigned colour~$1$, then the
$3$-edge-colouring of $H-E(C)$ induced by $\mathcal{M}$
uniquely extends to a $3$-edge-colouring of $\bar H$. Similarly
we can check that the completion $\bar K$ of $K$ is
$3$-edge-colourable, too. With both $\bar H$ and $\bar K$ being
$3$-edge-colourable, we conclude that so is~$G$, which is a
contradiction. This proves that $C$ is contained in one of the
components of $G-R$, say~$H$. All the edges of $K$ are now
simply covered, so $K$ is $3$-edge-colourable, and hence $H$ is
not. It is easy to see that the matchings $M_1\cap H$, $M_2\cap
H$, and $ M_3\cap H$ of $H$ extend to perfect matchings $M_1'$,
$M_2'$, and $M_3'$ of $\bar H$, which constitute a $3$-array of
$\bar H$ having $C$ as its core. Therefore $\df{\bar H}=3$, and
$G'=\bar H$ is the sought reduction of $G$.
\end{proof}

\begin{lemma}\label{lem:3-cuts}
Let $G$ be a snark with $\df{G} = 3$. If $G$ contains a
cycle-separating $3$-cut, then $G$ admits a reduction to a
smaller snark $G'$ with $\df{G'}=3$, unless one of the
components resulting from the removal of the cut is an
essential triangle. Every hexagonal core of $G$ that does
not intersect a triangle is inherited into $G'$.
\end{lemma}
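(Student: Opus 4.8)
The plan is to imitate the proof of the $2$-cut case (Lemma~\ref{lem:2-cuts}), but now to track the three cut edges through the Fano flow. Write $R=\{r_1,r_2,r_3\}$ and $G=H*K$, where the two $3$-poles $H$ and $K$ each contain a circuit because $R$ is cycle-separating; in particular $|V(H)|,|V(K)|\ge 3$. By the Parity Lemma (Lemma~\ref{lem:par}) every proper $3$-edge-colouring assigns three distinct colours to $r_1,r_2,r_3$, so colourings of $H$ and $K$ could be matched across $R$ to colour $G$; since $G$ is a snark, at least one of $H,K$ is uncolourable. Completing an uncolourable $3$-pole adds a single vertex, so its completion $\bar H$ satisfies $|\bar H|=|H|+1<|G|$, whence every such completion is a proper reduction. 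Feeding in Theorem~\ref{thm:main}, as $\df{G}=3$ there is no triply covered edge, $\chi$ is a nowhere-zero Fano flow, and the hexagonal core $C$ alternates uncovered and doubly covered edges. Flow conservation across $R$ makes $\chi(r_1),\chi(r_2),\chi(r_3)$ a line of the Fano plane, so the three values are distinct and nonzero (at most one cut edge is uncovered); and since a circuit meets a cut in an even number of edges, $C$ meets $R$ in either $0$ or $2$ edges. The whole argument splits along this alternative.

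If $C$ is disjoint from $R$, it lies in one component, say $H$; then every edge of $K$ is simply covered, so $\chi$ restricts to a proper $3$-edge-colouring of $K$, i.e. $K$ is the colourable side. Completing $H$, one extends $\mathcal{M}$ by sending each cut edge into the matching named by its simple colour; the new vertex then sees three distinct colours, no uncovered or doubly covered edge is created, and the core is unchanged. Thus $G'=\bar H$ is a snark with $\df{G'}=3$ into which $C$ is inherited, exactly as in Lemma~\ref{lem:2-cuts}.

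Suppose now $C$ meets $R$ in two edges. A convenient bookkeeping device is that the weights of the three edges at any vertex of a $3$-array sum to $3$, so $\mathcal{M}$ extends across a single completion vertex only when the cut carries weight multiset $(1,1,1)$ or $(2,1,0)$. Computing with the Fano line, the two core edges in $R$ are either one uncovered and one doubly covered, giving cut weights $(2,1,0)$, or both doubly covered, giving $(2,2,1)$ (the third cut edge being simply covered in each case). In the $(2,1,0)$ case I would extend $\mathcal{M}$ across the completion and count the inherited uncovered edges: a ``spread'' crossing leaves only two of them, i.e. a $4$-cycle core, which by Proposition~\ref{prop:core}(v) forces both completions, and hence $G$ itself, to be colourable---impossible; the complementary ``tight'' crossing exposes a $2$-edge-cut disjoint from $C$, so the reduction and the inheritance of $C$ follow from Lemma~\ref{lem:2-cuts}. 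The multiset $(2,2,1)$ does not sum to $3$: here the two doubly covered cut edges flank a single uncovered core edge $e$ lying in the small component, and contracting that component is precisely the operation that can raise the defect. The aim in this case is to show the small component is a triangle $T$ with $\delta(T)=R$ and $C\cap T=\{e\}$, so that the only snark completion along $R$ is $G/T$; this completion has defect $3$ exactly when $T$ is not essential, which yields both the stated exception and, via Lemma~\ref{lem:essential}, the fact that a core avoiding all triangles never falls into this case and is therefore inherited.

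The main obstacle is precisely this last step: proving that a $(2,2,1)$-crossing forces the small side to be a single triangle rather than a larger uncolourable piece. Naive local recolouring is doomed, because the way the cut edges sit in $M_1,M_2,M_3$ is dictated by the $H$-side and keeps $e$ uncovered no matter how $K$ is recoloured, so optimality of $\mathcal{M}$ alone does not shrink $K$. My plan is to exploit that in this pattern one matching, say $M_1$, contains the entire cut $R$, so the complementary $2$-factor does not cross $R$ and restricts to a $2$-factor of $K$ through $e$; combining this with the prior elimination of $2$-cuts and with parts (ii) and (iii) of Lemma~\ref{lem:essential} should force the two neighbours of $e$ to coincide in a common vertex, collapsing $K$ to a triangle. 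Verifying the essential-versus-non-essential dichotomy for this triangle through Theorem~\ref{thm:main} is the remaining delicate point.
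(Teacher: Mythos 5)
Your Case 1 (the core avoids the cut) and your identification of the two possible Fano lines on $R$ match the paper, and your instinct that the line $\{\emptyset,12,3\}$ (one uncovered, one doubly covered cut edge) must be killed by a recolouring/colourability contradiction is also the paper's route, though your ``spread/tight'' dichotomy is too vague to check and the claim that the ``tight'' sub-case exposes a $2$-edge-cut does not correspond to anything in the actual argument -- the paper simply shows directly that in this configuration both completions $\bar H$ and $\bar K$ are $3$-edge-colourable, hence $G$ is, a contradiction.

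The genuine gap is exactly the step you flag at the end, and it cannot be closed the way you propose. In the surviving case $\{\phi(r_1),\phi(r_2),\phi(r_3)\}=\{12,13,1\}$ the component $K$ containing the single uncovered core edge $e_1$ is the \emph{colourable} side, and it is simply false that $K$ must be a triangle: $K$ is a triangle precisely when the two edges $f_1,f_2$ leaving $e_1$ share their far endpoint, and nothing in the hypotheses forces this (one can easily attach a larger colourable $3$-pole containing $e_1$ behind the cut $\{e_0,e_2,r_3\}$). So your plan to use the $2$-factor avoiding $M_1$ together with Lemma~\ref{lem:essential} to ``force the two neighbours of $e$ to coincide'' is chasing a statement that is not true. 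The paper's missing idea is to change the cut: when $K$ is not a triangle, replace $R$ by the $3$-edge-cut $R'=\{f_1,f_2,r_3\}$ and contract the component $K'$ of $G-R'$ not containing $e_1$. Since $\mathcal{M}$ restricts to a proper colouring of $K'$, the contraction $G'=G/K'$ is a snark, it is a proper reduction, the restricted matchings $M_i\cap G'$ form a $3$-array of $G'$ with the \emph{same} core $C$ (so $\df{G'}=3$ and $C$ is inherited), and the edges $e_1,f_1,f_2$ now form a triangle in $G'$. The only case in which no proper reduction preserving defect $3$ exists is when $K$ itself is a triangle and that triangle is essential -- which is where the stated exception comes from. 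Without this cut-switching step your argument does not reach the conclusion.
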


\begin{proof}
Let $R=\{r_1,r_2,r_3\}$ be an arbitrary cycle-separating
$3$-edge-cut in $G$, and let $H$ and $K$ be the components of
$G-R$. There are two cases to consider.

\medskip\noindent
Case 1. \emph{$G$ admits an optimal $3$-array
$\mathcal{M}$ such that $\core(\mathcal{M})\cap R=\emptyset$.}
We show that contracting the component of $G-R$ not containing
$\core(\mathcal{M)}$ to a vertex produces a proper reduction
$G'$ of $G$ with $\df{G'}=3$. Let $C$ be the $6$-cycle
constituting the core of $\mathcal{M}$. assumption guarantees
that $C$ is fully contained in a component of $G-R$, say $H$.
Consider the completions $\bar{H}$ and $\bar{K}$ to cubic
graphs. Since $K$ is $3$-edge-colourable, $H$ is not. Therefore
$\bar K$ is $3$-edge-colourable, and $\bar H$ is not. By
Theorem~\ref{thm:main}, $\df{\bar{H}}\ge 3$. As $C\subseteq H$,
the edges of $R$ are simply covered, and by the Kirchhoff law
applied to the characteristic flow for $\mathcal{M}$ they
belong to three distinct members of $\mathcal{M}$. It is now
clear that $\mathcal{M}$ induces a $3$-array $\mathcal{M}'$ of
$\bar H$ with $C$ as its core. Therefore $\df{\bar{H}}= 3$, and
$G'=\bar H$ is the required reduction of $G$ containing $C$.
Note that $G'$ is isomorphic to the graph $G/K$ obtained from
$G$ by contracting $K$ into a single vertex. This establishes
Case~1.

\medskip\noindent
Case 2. \emph{The core of every optimal $3$-array for $G$
intersects the $3$-edge-cut $R$.} We start with the following
observation.

\medskip\noindent
Claim 1. \emph{$G-R$ has a unique component $Q$ such $G/Q$ is
a snark. Moreover, $C\cap Q$ consists of a single uncovered edge. }

\medskip\noindent
\emph{Proof of Claim 1.} Clearly, $|C\cap R| = 2$, and we may
assume that $C\cap R=\{r_1,r_2\}$. Kirchhoff's law yields that
$\chi(r_1)+\chi(r_2)+\chi(r_3)=0$. By
Proposition~\ref{prop:notriply}, $\chi$ is a nowhere-zero
$\mathbb{Z}_2^3$-flow, therefore the values $\chi(r_1)$,
$\chi(r_2)$, and $\chi(r_3)$ constitute a line $\ell$ in the
Fano plane. Since $C$ intersects $R$, the line $\{(0,1,1),
(1,0,1), (1,1,0)\}=\{1,2,3\}$ is excluded. There remain two
possibilities for $\ell$, which imply that either
\begin{itemize}
\item both $r_1$ and $r_2$ are doubly covered, or

\item one of them is doubly covered and the other is
    uncovered.
\end{itemize}
Without loss of generality we may assume that in the former
case $\phi(r_1)=12$, $\phi(r_2)=13$, and
$\phi(r_3)=1$, and in the latter case $\phi(r_1)=12$,
$\phi(r_2)=\emptyset$, and $\phi(r_3)=3$, see
Figure~\ref{fig:konf}. We prove that the latter possibility
does not occur.

Suppose to the contrary that $\phi(r_1)=12$,
$\phi(r_2)=\emptyset$, and $\phi(r_3)=3$. Since the edges
in $R$ are independent, we conclude that $r_1=e_0$ and
$r_2=e_3$, see Figure~\ref{fig:core3}. Let $H$ be the component
of $G-R$ that contains $e_1$ and $e_2$. If we set
$\phi'(e_0)=2$, then the $3$-edge-colouring of $\phi$ of
$H-E(C)$ associated with $\mathcal{M}$ extends to a
$3$-edge-colouring $\phi'$ of~$\bar H$. By symmetry, the
$3$-edge-colouring $\phi$ of $K-E(C)$ extends to a
$3$-edge-colouring of~$\bar K$. It follows that $G$ is
$3$-edge-colourable, which is a contradiction.

Therefore
$\phi(r_1)=12$, $\phi(r_2)=13$, and $\phi(r_3)=1$. Now
$r_1=e_0$ and $r_2=e_2$, so one of the components of $G-R$, say
$H$,  contains the path $e_3e_4e_5$ and the other component $K$
contains the uncovered edge $e_1$. Clearly, $K$ is
$3$-edge-colourable. In other words, $K$ is the required
component $Q$ of $G-R$ such that $Q\cap C$ consists of a single
uncovered edge. This establishes Claim~1.

\medskip
To finish the proof it remains to establish the following.

\medskip\noindent
Claim 2. \emph{If $Q$ is not an essential triangle, then $G$
has a proper reduction to a snark $G'$ with $\df{G'}=3$.}

\medskip\noindent
\emph{Proof of Claim 2.} We keep the assumptions adopted
in the course of the proof of Claim~1. In particular, $C\cap
R=\{e_0,e_2\}$, $Q=K$,  and the unique edge of $Q\cap C$ is $e_1$.

First assume that $Q=K$ is a triangle. Clearly, the graph $G/K$
obtained by contracting $K$ into a single vertex is a snark, so
$\df{G/K}\ge 3$. As $K$ is not essential, we conclude that
$\df{G/K}=3$, implying that $G/K$ is the required reduction of
$G$. Note that, in this case, every hexagonal core of
$G/K$ intersects the vertex of $G/K$ resulting from the
contraction of $K$, which means that it is not inherited from
$G$.

Next assume that $K$ is not a triangle. Consider the
$3$-edge-cut $R'=\{f_1,f_2,r_3\}$ in~$G$. Let $H'$ and $K'$ be
the components of $G-R'$, with $K'$ being the one that does not
contain~$e_1$. Note that $C\subseteq H'$. Clearly,
$\mathcal{M}$ induces a proper $3$-edge-colouring of $K'$, so
$H'$ is not $3$-edge-colourable, and  therefore the graph
$G'=G/K'$ obtained from $G$ by contracting $K'$ into a single
vertex is a snark. Clearly, $G'$ is a proper reduction of $G$.
Observe that the edges $e_1$, $f_1$, and $f_2$ form a triangle
separated by $R'$ from the rest of $G'$. To show that $G'$ is
the sought reduction it remains to check that $\df{G'}=3$. To
this end it is sufficient to realise that $M_i'=M_i\cap G'$ is
a perfect matching of $G'$ for each $i\in\{1,2,3\}$. Thus
$\mathcal{M}'=\{M_1',M_2',M_3'\}$ is a $3$-array of $G'$ with
$\core(\mathcal{M}')=\core(\mathcal{M})=C$, and we are done.
This concludes the proof of Claim~2 as well as that of
Lemma~\ref{lem:3-cuts}.
\end{proof}

\begin{example}\label{ex:non-essential}
Lemma~\ref{lem:essential}(i) informs us that if a snark with
defect $3$ contains an essential triangle, then the triangle
must be intersected by every hexagonal core. Somewhat
surprisingly, the converse is not true, which implies that the
discussion following Claim~2 in the proof of
Lemma~\ref{lem:3-cuts} cannot be avoided. 
The graph $G$ in Figure~\ref{fig:trim_a} has defect $3$ and
possesses three hexagonal cores, all of which intersect its
only triangle $T$. The triangle $T$ is \emph{not} essential in
$G$, because the graph $G/T$, shown in Figure~\ref{fig:trim_b},
has defect $3$ as well. The latter graph has two hexagonal
cores, both containing the vertex resulting from contraction of
$T$, which is denoted by $v_1$. Note that inflating $G/T$ at
any of the vertices $v_i$ with $i\in\{1,2,3,4\}$ produces a
snark with a triangle having the same property as $T$ has in
$G$.
\begin{figure}[h!]
 \centering
\subfigure[]{\includegraphics[width=0.4\textwidth]{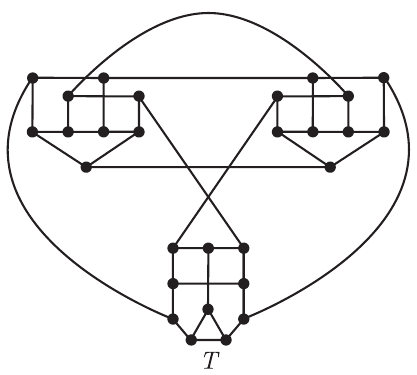}\label{fig:trim_a}}
\hfil
\subfigure[]{\includegraphics[width=0.4\textwidth]{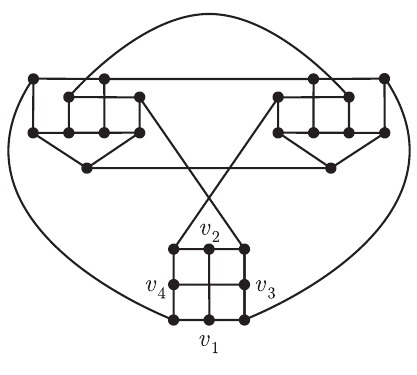}\label{fig:trim_b}}
 \caption{No hexagonal core of the snark $G$ is inherited into $G/T$}
\label{fig:trim}
\end{figure}
\end{example}

\begin{lemma}\label{lem:core_quadrangle}
Let $G$ be a snark with $\df{G}=3$. If a hexagonal core of $G$
intersects a quadrilateral, then the intersection consists of a
single uncovered edge. Moreover, two edges leaving the
quadrilateral are doubly covered and the other two are simply
covered.
\end{lemma}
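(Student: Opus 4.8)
The plan is to normalise the situation exactly as in Figure~\ref{fig:core3}, so that $C=(e_0e_1\dots e_5)$ has doubly covered edges $e_0,e_2,e_4$ coloured $12,13,23$ and uncovered edges $e_1,e_3,e_5$, while every edge off $C$ is simply covered. I would first pin down the combinatorial shape of $C\cap Q$. By Theorem~\ref{thm:main} the core $C$ is an induced $6$-cycle, so it has no chord. If $C\cap Q$ consisted of two non-adjacent edges of $Q$, or of three or more edges of $Q$, then the remaining edge(s) of $Q$ would have both endpoints on $C$ while not lying on $C$, i.e. would be chords of the induced cycle $C$ -- impossible. Hence $C\cap Q$ is either a single edge of $Q$ or a path consisting of two adjacent edges of $Q$.

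Next I would eliminate the two-adjacent-edge case. Here the two shared edges are consecutive on $C$, so one is uncovered and the other doubly covered, and both lie inside $Q$; write $Q=v_0v_1v_2t$ with $e_0,e_1\subseteq Q$. As $Q$ is a quadrilateral in a snark it is removable (Lemma~\ref{lem:odstr4}), and since $G$ is a snark no $3$-edge-colouring of $G-V(Q)$ extends across $Q$. A short analysis of the extension problem for the $4$-edge-cut $\delta(Q)$ shows that this forces the $4$-pole $G-V(Q)$ to be isochromatic, the two connector pairs being the pairs of opposite semiedges of $Q$. Reinserting the two corresponding adjacent vertices $u,w$ then yields a smaller snark $G'$, the reduction of $G$ along $\delta(Q)$, and the characteristic flow $\chi$ of $\mathcal{M}$ descends to a valid $3$-array $\mathcal{M}'$ of $G'$. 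By Kirchhoff's law the new edge $uw$ receives the sum of the colours of one opposite pair, namely $(1,1,1)+(0,1,0)=(1,0,1)$, so $uw$ is simply covered; since the uncovered edge $e_1$ lay inside $Q$ it disappears under the reduction while no new uncovered edge is created. Hence $\mathcal{M}'$ leaves only two edges uncovered, giving $\df{G'}\le 2$ and contradicting $\df{G'}\ge 3$.

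It remains to handle the single-edge case and to show the shared edge $e$ is uncovered. Suppose $e=e_0$ is doubly covered. Then the two edges of $Q$ meeting $e$ at $v_0$ and $v_1$ are leaving edges of $C$, and since the $C$-neighbours $e_5,e_1$ of $e_0$ are uncovered, Kirchhoff's law forces both of these $Q$-edges to carry colour $3$; the fourth edge of $Q$ and the two semiedges of $\delta(Q)$ at the remaining two vertices of $Q$ are then determined, the latter pair receiving a common colour $c\in\{1,2\}$. The strategy is to use the removable quadrilateral once more: $G-V(Q)$ is again isochromatic, and I would argue that the coincidence of colours just described forces $G-V(Q)$ to admit a colouring in which all four connector semiedges carry the \emph{same} colour. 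Such an ``all-equal'' colouring extends over $Q$ to a proper $3$-edge-colouring of the whole of $G$, contradicting that $G$ is a snark. The hard part is precisely the production of this all-equal colouring: establishing it requires the detailed Fano-flow analysis across $\delta(Q)$, exploiting that the core of $G$ is a single hexagon and that $G-E(C)$ is properly $3$-edge-colourable with the boundary pattern $(3,3,2,2,1,1)$ of Theorem~\ref{thm:main}(iii). This is the step I expect to be the main obstacle, and it is exactly what distinguishes the doubly covered configuration (which collapses to colourability) from the legitimate one, in which $uw$ would instead be uncovered and the defect preserved.

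Finally, the second assertion follows at once from the first. With $C\cap Q=\{e\}$ and $e=v_iv_{i+1}$ uncovered, the two edges of $Q$ at $v_i$ and $v_{i+1}$ other than $e$ are the leaving edges $f_i,f_{i+1}$ of $C$, so the two edges of $\delta(Q)$ incident with $v_i,v_{i+1}$ are the core-neighbours $e_{i-1},e_{i+1}$ of $e$, which are doubly covered. The other two edges of $\delta(Q)$ are incident with the two vertices of $Q$ lying off $C$; being off the core they are simply covered. Thus $\delta(Q)$ consists of exactly two doubly covered and two simply covered edges, as claimed.
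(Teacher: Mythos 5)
Your reduction of the shape of $C\cap Q$ to ``one edge or two adjacent edges'' is correct and matches the paper's Claim~1. Your elimination of the two-adjacent-edge case is also correct, and it is a genuinely different route: the paper (its Claim~3) disposes of this configuration by an explicit recolouring of $E(C)\cup E(Q)$ that yields a proper $3$-edge-colouring of $G$, whereas you pass to the standard quadrilateral reduction $G'$ (using that $G-V(Q)$ is isochromatic with the \emph{opposite} semiedge pairs as connectors, which indeed follows from the fact that a boundary pattern on $\delta(Q)$ extends across a quadrilateral unless it consists of two distinct colours on opposite pairs) and observe that the induced $3$-array of $G'$ has only two uncovered edges, contradicting $\df{G'}\ge 3$. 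That argument is sound: the restricted matchings do extend to perfect matchings of $G'$ because the uncovered edge of $C\cap\delta(Q)$ lies in no $M_i$ and the two simply covered edges of $\delta(Q)$ carry distinct colours, and the new edge is simply covered by Kirchhoff's law.

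The genuine gap is exactly where you flag it: the case where $C\cap Q$ is a single \emph{doubly covered} edge $e_0$ is not proved, only a strategy is sketched, and the strategy as stated aims at the wrong target. Since $G$ is a snark, \emph{no} colouring of the $4$-pole $G-V(Q)$ can assign all four semiedges the same colour (any such colouring would extend across $Q$), so an ``all-equal'' colouring cannot be produced; what you actually need is any colouring of $G-V(Q)$ whose boundary pattern is not two distinct colours on opposite pairs. The paper's Claim~2 obtains one concretely: with $Q=(e_0f_1d_0f_0)$ and $\delta(Q)=\{e_5,e_1,r_3,r_4\}$, the coincidence $\phi(f_0)=\phi(f_1)=3$ forces, via Kirchhoff's law at the two vertices of $Q$ off $C$, that $\phi(r_3)=\phi(r_4)\in\{1,2\}$ (say both equal $1$); one then recolours only the eight edges of $C\cup Q$, e.g.\ $\phi'(e_0)=1$, $\phi'(e_1)=\phi'(e_3)=\phi'(e_5)=3$, $\phi'(e_2)=1$, $\phi'(e_4)=2$, $\phi'(f_0)=\phi'(f_1)=2$, $\phi'(d_0)=3$, which together with $\phi$ on the remaining (simply covered) edges is a proper $3$-edge-colouring of $G$ --- the desired contradiction. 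The resulting boundary pattern on $\delta(Q)$ is $(3,3,1,1)$ in cyclic order, i.e.\ adjacent equal pairs, not all-equal. Until this explicit recolouring (or an equivalent argument) is supplied, the lemma is not proved, since without it the shared edge could a priori be doubly covered and the final paragraph's derivation of the covering pattern on $\delta(Q)$ would not apply.
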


\begin{proof}
Consider an arbitrary optimal $3$-array $\mathcal{M}$ of $G$,
and let $C$ be its hexa\-gonal core. We keep using the previous
notation for the characteristic flow and the Fano colouring
around $C$ (cf. Figure~\ref{fig:konf} and
Figure~\ref{fig:core3}). Further, let $D$ be a 4-cycle in $G$,
and let $R=\delta(D)$.

\medskip\noindent
Claim 1. \emph{$|C\cap R|=2$.}

\medskip\noindent
\emph{Proof of Claim 1.} Clearly, $|C\cap R|$ is even, so $|C\cap
R|=2$ or $|C\cap R|=4$. The latter possibility cannot occur.
Indeed, if we had $|C\cap R|=4$, then $C\cap D$ would consist
of two independent edges of $D$. However, the other two edges
of $D$ would constitute chords of~$C$, contrary to
Theorem~\ref{thm:main}.

\medskip
By Claim~1, the edges of $R$ naturally come in two pairs, those
that belong to $C$ and those that do not. Let us assume that
$R=\{r_1,r_2,r_3,r_4\}$ where  $C\cap R=\{r_1,r_2\}$. The other
pair $\{r_3,r_4\}$ thus consists of simply covered edges.

\medskip\noindent
Claim 2. \emph{$\phi(r_1)\ne \phi(r_2)$.}

\medskip\noindent
\emph{Proof of Claim 2.} Suppose to the contrary that
$\phi(r_1)=\phi(r_2)$. Clearly, this is only possible
when both $r_1$ and $r_2$ are uncovered. Since $C\cap D$ is a
path and $C$ has no chords, we conclude that $C\cap D$ is not a
path of length $3$. Since we are assuming
$\phi(r_1)=\phi(r_2)$, $C\cap D$ cannot be a path of
length $2$. We conclude that $C\cap D$ has one edge.  Without
loss of generality we may assume that $r_1=e_5$ and $r_2=e_1$,
so the unique edge of $C\cap D$ is $e_0$. It follows that
$D=(e_0f_1d_0f_0)$, where $d_0$ is the edge joining the
end-vertices $u_0$ and $u_1$ of $f_0$ and~$f_1$, respectively,
not lying on $C$. Since $\phi(r_1)=\phi(r_2)$,
Kirchhoff's law implies that $\phi(r_3)=\phi(r_4)$.
Recalling that $\phi(f_0)=\phi(f_1)=3$, we conclude
$\phi(r_3)\in\{1,2\}$. Without loss of generality we may
assume that $\phi(r_3)=\phi(r_4)=1$. We are going to
recolour the edges of $C\cup D$. If we set $\phi'(e_3)=3$,
we can uniquely extend the $3$-edge-colouring of $\phi$ of
$G-E(C\cup D)$ induced by $\mathcal{M}$ to a $3$-edge-colouring
$\phi'$ of the entire $G$ with $\phi'(e_0)=1$,
$\phi'(e_1)=3$, $\phi'(e_2)=1$, $\phi'(e_4)=2$,
$\phi'(e_5)=3$, $\phi'(f_0)=2$, $\phi'(f_1)=2$, and
$\phi'(d_0)=3$. This contradiction proves that
$\phi(r_1)\ne \phi(r_2)$.

\medskip
An important consequence of Claim~2 is that the set
$\ell=\{\chi(r_1),\chi(r_2),\chi(r_3)+\chi(r_4)\}$ forms a line
of the Fano plane. Replacing $\chi$ with $\phi$, there are
two possibilities for $\ell$ up to permutation of the index set
$\{1,2,3\}$, just as in the proof of Lemma~\ref{lem:3-cuts}
(see Claim~1 therein): either $\ell=\{12,13,1\}$ or
$\ell=\{\emptyset,12,3\}$. Next we show that the latter
possibility does not occur.

\medskip\noindent
Claim 3.
\emph{$\{\phi(r_1),\phi(r_2),\phi(r_3)+\phi(r_4)\}
=\{12,13,1\}$.}

\medskip\noindent
\emph{Proof of Claim 3.} Suppose to the contrary that
$\{\phi(r_1),\phi(r_2),\phi(r_3)+\phi(r_4)\}
=\{\emptyset,12,3\}$. In view of symmetry, we can assume that
$\phi(r_1)=12$ and $\phi(r_2)=\emptyset$. It follows that
$C\cap D$ coincides with the path $e_1e_2$ or $e_5e_4$. Without
loos of generality we may assume the former, so
$f_2\in\{r_3,r_4\}$ and $D=(e_1e_2f_3f_1)$. We may further
assume that $f_2=r_3$, whence $\phi(r_3)=2$. Since $r_4$ shares
a common vertex with $f_1$ and $f_3$, and one has $\phi(f_1)=3$
and $\phi(f_3)=2$, we conclude that $\phi(r_4)=1$. This, in
particular, confirms the assumption that
$\phi(r_3)+\phi(r_4)=3$. If we set $\phi'(e_5)=2$, then the
$3$-edge-colouring of $\phi$ of $G-E(C)$ induced by
$\mathcal{M}$ uniquely extends to a $3$-edge-colouring $\phi'$
of the entire $G$ with $\phi'(e_0)=1$, $\phi'(e_1)=3$,
$\phi'(e_2)=1$, $\phi'(e_3)=2$, $\phi'(e_4)=3$, $\phi'(f_1)=2$,
and $\phi'(f_3)=3$. This contradiction establishes Claim~3.

\medskip
We have just proved that
$\{\phi(r_1),\phi(r_2),\phi(r_3)+\phi(r_4)\}
=\{12,13,1\}$. In view of symmetry, we can assume that
$\phi(r_1)=12$, which implies that $r_1=e_0$, $r_2=e_2$, and
that the unique edge of $C\cap D$ is the uncovered edge $e_1$.
This proves the lemma.
\end{proof}

\begin{proposition}\label{lem:4-cycles}
Let $G$ be a snark with $\df{G}=3$. If $G$ contains a
$4$-cycle, then $G$ can be reduced to a smaller snark $G'$ with
$\df{G'}=3$. Moreover, every hexagonal core of $G$ is inherited
into $G'$.
\end{proposition}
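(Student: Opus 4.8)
The plan is to use the structural information about how a hexagonal core meets a $4$-cycle, which is exactly what Lemma~\ref{lem:core_quadrangle} provides, together with the standard quadrilateral-reduction move. First I would fix an optimal $3$-array $\mathcal{M}$ whose core is a hexagonal core $C$, and let $D=(d_0d_1d_2d_3)$ be a $4$-cycle in $G$. There are two cases depending on whether $C$ intersects $D$. If $C\cap D=\emptyset$, then all four edges of $R=\delta(D)$ are simply covered, so by Kirchhoff's law applied to the characteristic flow they carry colours obeying the Parity Lemma; I would perform the usual quadrilateral reduction, namely delete $D$ and join the two pairs of pendant edges (inflating back to a cubic graph in the unique way that respects the isochromatic structure of the $4$-pole $G-V(D)$), obtaining a smaller snark $G'=G/D$-type graph. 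Because $\delta(D)$ is disjoint from $C$, the array $\mathcal{M}$ restricts to a $3$-array of $G'$ whose core is still exactly $C$; hence $\df{G'}=3$ and $C$ is inherited.

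The more delicate case is $C\cap D\ne\emptyset$, and here I would invoke Lemma~\ref{lem:core_quadrangle} directly: the intersection is a single uncovered edge, say $e_1$ in the notation of Figure~\ref{fig:core3}, the two edges $r_1=e_0$ and $r_2=e_2$ of $R$ lying on $C$ are the doubly covered edges with $\phi(r_1)=12$ and $\phi(r_2)=13$, and the remaining two edges $r_3,r_4$ of $\delta(D)$ are simply covered with $\phi(r_3)+\phi(r_4)=1$. Writing $D=(e_1\,r_3'\,d\,r_4')$ where $d$ is the fourth edge of $D$ not leaving it, I would again carry out the quadrilateral reduction by contracting $D$ (equivalently, suppressing the $4$-cycle to a single vertex or performing the appropriate junction that restores $3$-regularity). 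The key point to verify is that the matchings $M_i'=M_i\cap G'$ remain perfect matchings of $G'$ and that the induced array has core precisely $C$: since $e_1$ is the only core edge inside $D$ and it survives as an edge incident with the new vertex, while $r_1,r_2$ remain the two doubly covered edges of $C$, the recovered array keeps $C$ intact and leaves exactly the same three uncovered edges. Thus $\df{G'}=3$ and $C$ is inherited.

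The main obstacle, and the step deserving the most care, is confirming that the reduction move genuinely yields a \emph{snark} (not merely a cubic graph) and that it is \emph{smaller}, while simultaneously preserving the matchings. For smallness and snarkhood I would argue exactly as in Lemma~\ref{lem:odstr4} and the reductions of Section~\ref{sec:reduction}: a quadrilateral in a snark is removable, so contracting or suppressing it produces a smaller cubic graph $G'$ that is again uncolourable (if $G'$ were $3$-edge-colourable one could lift the colouring back across the small cut $\delta(D)$ to colour $G$, using the isochromatic structure of colourable $4$-poles guaranteed by the Parity Lemma). The subtlety is that the quadrilateral reduction must be performed in the \emph{unique} way compatible with the colour pattern dictated by Lemma~\ref{lem:core_quadrangle}, so that the three perfect matchings restrict cleanly; I would make explicit the bijection on semiedges realising the junction and then check, vertex by vertex around the new vertices, that each $M_i'$ covers every vertex exactly once and that no previously uncovered core edge becomes covered. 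Once these bookkeeping checks are in place, the equality $\df{G'}=3$ and the inheritance of $C$ follow immediately, completing the proof.
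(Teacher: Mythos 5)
Your first case ($C\cap D=\emptyset$) is essentially the paper's argument: $G-V(D)$ is uncolourable by Lemma~\ref{lem:odstr4}, the four simply covered dangling edges pair up into equally coloured pairs by Kirchhoff's law, and joining those pairs yields a smaller snark into which $\mathcal{M}$ and its core $C$ descend. (Your appeal to ``the isochromatic structure of the $4$-pole $G-V(D)$'' is off, since that $4$-pole is uncolourable and isochromaticity is defined only for colourable $4$-poles, but the underlying move is right.) You also skip the preliminary step of reducing to the case where $\delta(D)$ is independent, which the paper dispatches via Lemmas~\ref{lem:2-cuts} and~\ref{lem:3-cuts}.

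The genuine gap is in the case $C\cap D\neq\emptyset$. There you propose ``contracting $D$'' or ``suppressing the $4$-cycle to a single vertex'', i.e.\ the standard quadrilateral reduction that removes all four vertices of $D$. But two of those four vertices are the endpoints of the core edge $e_1$, so this move destroys $e_1$ and turns $e_0$ and $e_2$ into dangling edges; your assertion that $e_1$ ``survives as an edge incident with the new vertex'' is false. Worse, the four dangling edges left by deleting $V(D)$ carry colours $12$, $13$, $2$, $3$ by Lemma~\ref{lem:core_quadrangle}, which cannot be matched into equally coloured pairs, so no junction makes all three sets $M_i\cap G'$ perfect matchings of the reduced graph; neither $\df{G'}=3$ nor the inheritance of $C$ follows from this move. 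The paper instead deletes only the two vertices $u_1,u_2$ of $D$ that do \emph{not} lie on $C$ and performs the crossed junctions $f_1*r_4$ and $f_2*r_3$, forced by the colour coincidences $\phi(f_1)=\phi(r_4)=3$ and $\phi(f_2)=\phi(r_3)=2$. This keeps $e_1$, and hence all of $C$, intact; the Fano colouring extends verbatim to the two new edges; and uncolourability of $G'$ still follows from the removability of $D$, because deleting the two endpoints of $e_1$ from $G'$ recovers exactly $G-V(D)$.
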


\begin{proof}
Let $D$ be a 4-cycle in $G$, and let $R=\delta(D) =
\{r_1,r_2,r_3,r_4\}$. We may assume that the edges of $R$ are
independent, because otherwise $G$ would have a
cycle-separating $2$-cut or $3$-cut, and we could apply
Lemmas~\ref{lem:2-cuts} and~\ref{lem:3-cuts} to conclude that
$G$ admits a proper reduction to a snark $G'$ with $\df{G'}=3$.
Let $\mathcal{M}=\{M_1,M_2,M_3\}$ be an arbitrary optimal
$3$-array of $G$, and let $C$ be its hexagonal core. There
are essentially two possibilities: either $C\cap D=\emptyset$,
or by Lemma~\ref{lem:core_quadrangle}, $C\cap D$ consists of
the unique edge.

If $C\cap D=\emptyset$, then $C\cap R=\emptyset$. By
Lemma~\ref{lem:odstr4}, the graph $G-V(D)$ is not
$3$-edge-colourable. The four dangling edges of $G-V(D)$
are simply covered, and by the Kirchhoff law they occur in two
equally coloured pairs. We join each pair into a single edge
thereby producing a snark $G'$ of order $|G'|=|G|-4$. The Fano
flow on $G$ clearly induces one on $G'$, which in turn
determines the same core $C$. Hence, $G'$ is the required
proper reduction of $G$.

\begin{figure}[h!]
 \centering
\subfigure[]{\includegraphics[width=0.4\textwidth]{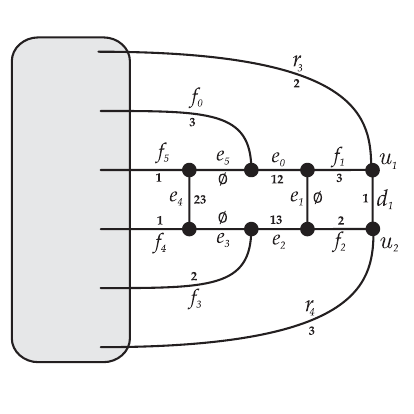}\label{fig:quadrangle_a}}
\hfil
\subfigure[]{\includegraphics[width=0.4\textwidth]{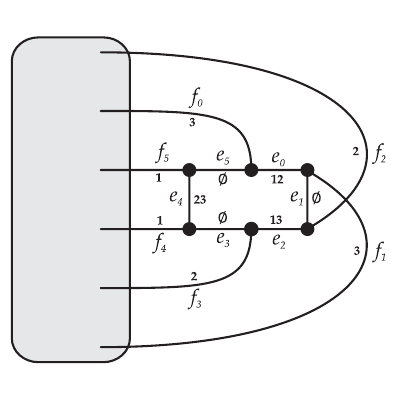}\label{fig:quadrangle_b}}
 \caption{A quadrilateral intersected by a hexagonal core and its reduction}
\label{fig:quadrangle}
\end{figure}

Henceforth we may assume that $C\cap R\ne\emptyset$. From
Lemma~\ref{lem:core_quadrangle} we obtain that the intersection
of the core and the quadrangle consists of a unique
uncovered edge of $C$, say~$e_1$.
Since $r_1=e_0$ and $r_2=e_2$, we conclude that
$D=(e_1f_1d_1f_2)$, where $d_1$ is the edge of $D$ joining the
end-vertices $u_1$ and $u_2$ of $f_1$ and~$f_2$, respectively,
not lying on $C$. Without loss of generality we may assume that
$r_3$  and $r_4$ are incident with $u_1$ and $u_2$,
respectively. Recall that $\phi(f_1)=3$ and
$\phi(f_2)=2$, which implies that $\phi(d_1)=1$,
$\phi(r_3)=2$, and $\phi(r_4)=3$, see
Figure~\ref{fig:quadrangle_a}. Now, we take the graph
$G-\{u_1,u_2\}$, and keep the four dangling edges $f_1$, $f_2$,
$r_3$, and $r_4$. We form $G'$ from $G-\{u_1,u_2\}$ by
performing the junctions $f_1*r_4$ and $f_2*r_3$. Since
$G-V(D)$ is not $3$-edge-colourable, we deduce that $\df{G'}\ge
3$. If we define $\phi'$ by setting
$\phi'(f_1*r_4)=\phi(f_1)=\phi(r_4)=3$,
$\phi'(f_2*r_3)=\phi(f_2)=\phi(r_3)=2$, and
$\phi'(x)=\phi(x)$ for all other edges $x$ of $G'$, we
obtain a Fano colouring which determines a $3$-array of $G'$
whose core coincides with $C$, see Figure~\ref{fig:quadrangle_b}.
\end{proof}

\section{Main results}\label{sec:main}

\noindent{}We are now ready to establish the main results of this paper.

\begin{theorem}\label{thm:reduction}
Every snark $G$ with $\df{G}=3$ admits a reduction to a snark
$G'$ with $\df{G'}=3$ such that either $G'$ is nontrivial or
$G'$ arises from a nontrivial snark $K$ with $\df{K}\ge 4$ by
inflating a vertex to a triangle; the triangle is essential in
both $G$ and~$G'$.
\end{theorem}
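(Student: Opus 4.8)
The plan is to reduce $G$ step by step and induct on $|G|$. If $G$ is already nontrivial we are done, so assume it is not. Then $G$ has either cyclic connectivity below $4$ or girth below $5$. A digon or any cycle-separating $2$-cut is handled by Lemma~\ref{lem:2-cuts}, a quadrilateral by Proposition~\ref{lem:4-cycles}, and a cycle-separating $3$-cut by Lemma~\ref{lem:3-cuts}; in each case we obtain a strictly smaller snark of defect $3$ and apply the inductive hypothesis. A triangle $T$ is a special instance of a cycle-separating $3$-cut (in a snark the side opposite $T$ always contains a circuit, for otherwise $G$ would be $K_4$, which is colourable), so Lemma~\ref{lem:3-cuts} reduces $G$ \emph{unless} $T$ is essential. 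Thus the only feature that can block a reduction is an essential triangle, and by Lemma~\ref{lem:essential}(iv) there is at most one. Performing all admissible reductions is a finite process since the order strictly decreases, and it ends in a defect-$3$ snark $G'$ admitting no further reduction.

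Two cases arise. If $G'$ has no essential triangle, then it has no cycle-separating $2$- or $3$-cut and no circuit of length at most $4$ (a digon would give a $2$-cut, a triangle a $3$-cut, a quadrilateral a $4$-cycle, each reducible); hence $G'$ is cyclically $4$-edge-connected of girth at least $5$, i.e.\ nontrivial. Otherwise $G'$ contains the unique essential triangle $T$; put $K=G'/T$ and let $v_T$ be the contraction vertex. Then $\df{K}\ge 4$ straight from the definition of essentiality, and $T$ is essential in $G'$; that $T$ is essential in $G$ as well follows because every reduction inherits the hexagonal core, which by Lemma~\ref{lem:essential}(i) meets $T$, so $T$ persists and keeps its essentiality along the whole reduction. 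It remains to show that $K$ is nontrivial.

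For cyclic $4$-edge-connectivity I would lift cuts: a cycle-separating cut $S$ of size at most $3$ in $K$ consists of edges of $G'$, and replacing $v_T$ by the connected triangle $T$ turns $S$ into a cycle-separating cut of the same size in $G'$. Since the only such cut of $G'$ is $\delta(T)$, and $\delta(T)$ descends to $\delta(v_T)$, which merely isolates the single vertex $v_T$ and so is not cycle-separating in $K$, no such $S$ exists; the same lifting shows that $K$ is bridgeless and, using the absence of $4$-cycles in $G'$, that the contraction creates no multiple edges. For the girth I would again lift short circuits. A triangle of $K$ lifts either to a triangle of $G'$ distinct from $T$, impossible by uniqueness, or, if it runs through $v_T$, to a $4$-cycle of $G'$, which was eliminated; likewise a $4$-cycle of $K$ avoiding $v_T$ is a $4$-cycle of $G'$ and cannot occur.

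The one remaining and genuinely hard case is a $4$-cycle of $K$ through $v_T$: un-contracting $T$ turns it into a $5$-cycle of $G'$ meeting $T$ in exactly one edge and using the two edges of $\delta(T)$ at the endpoints of that edge. This configuration escapes all the cut- and short-cycle reductions above, and a local inspection of the Fano colouring around $T$ (Figure~\ref{fig:essential3}) shows the incident colours to be fully consistent with $\df{G'}=3$, so excluding it is exactly where the detailed Fano-flow analysis is required. Here I would exploit the fact that the two non-core edges of $T$ are simply covered, so a hexagonal core can meet $T$ only through its uncovered edge; it therefore suffices to use the extra room provided by such a $5$-cycle, via the recolouring technique of Section~\ref{sec:arrays}, to produce an optimal $3$-array of $G'$ in which that edge is covered. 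Its core is then disjoint from $T$, contradicting Lemma~\ref{lem:essential}(i); equivalently, the $5$-cycle yields a further proper defect-$3$ reduction, contradicting the terminality of $G'$. With the bad $5$-cycle excluded, $K$ has girth at least $5$ and is thus nontrivial, which completes the proof. The main obstacle, as the analysis above makes clear, is precisely this last step.
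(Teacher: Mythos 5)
Your reduction skeleton matches the paper's: terminate the reduction process, use Lemma~\ref{lem:2-cuts}, Lemma~\ref{lem:3-cuts} and Proposition~\ref{lem:4-cycles} to rule out $2$-cuts, non-triangle $3$-cuts and quadrilaterals in the terminal graph $G'$, invoke Lemma~\ref{lem:essential}(iv) for uniqueness of the essential triangle $T$, and then argue that $K=G'/T$ is nontrivial. You also correctly isolate where the difficulty lies: a quadrilateral of $K$ through the contraction vertex, i.e.\ a $5$-cycle $D$ of $G'$ sharing one edge with $T$. But at exactly that point the proposal stops being a proof. You write that you ``would exploit'' the fact that the core meets $T$ in its uncovered edge and ``use the extra room provided by such a $5$-cycle, via the recolouring technique,'' to produce an optimal $3$-array whose core misses $T$ --- or alternatively that the $5$-cycle ``yields a further proper defect-$3$ reduction.'' Neither claim is substantiated: a $5$-cycle triggers none of the reduction lemmas you have available, and no recolouring is exhibited. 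The paper's proof of precisely this step is its main content: it takes the $4$-edge-cut $S$ separating $D\cup T$ from the rest of $G'$, shows $|C\cap S|=2$ for the hexagonal core $C$, splits into two cases according to whether the core edge in $T$ is $d_0$ or an edge adjacent to $s_1$, pins down in each case which edges of $S$ lie on $C$ (eliminating subcases via chords of $C$ and colour clashes), and then exhibits an explicit recolouring of the one or two core edges landing in the far component $H$ that makes $G'$ $3$-edge-colourable --- a contradiction. That case analysis is not a routine verification one can wave at; without it the theorem is not proved.

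Two smaller points. First, your justification that $T$ is essential in $G$ as well --- ``every reduction inherits the hexagonal core'' --- is contradicted by the paper itself: in the triangle case of Lemma~\ref{lem:3-cuts} the core is explicitly \emph{not} inherited, and Example~\ref{ex:non-essential} shows cores need not survive contraction; moreover the reduction in Claim~2 of Lemma~\ref{lem:3-cuts} can \emph{create} the triangle of $G'$, so it need not even be a subgraph of $G$. Second, the contradiction you aim for (a core disjoint from $T$, against Lemma~\ref{lem:essential}(i)) is a different and weaker target than the one the paper actually reaches ($G'$ colourable), and you give no construction achieving it.
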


\begin{proof}
Consider an arbitrary snark $G$ with $\df{G}=3$. If $G$ is
nontrivial, then $G'=G$ is the required reduction. Assume that
$G$ is not nontrivial, but it cannot be reduced to a
nontrivial snark with $\df{G}=3$. We show that $G$ has a
reduction to a snark $G'$ which arises from a nontrivial snark
$K$ with $\df{K}\ge 4$ by inflating a vertex to a triangle.

Let $G'$ be a reduction of $G$ with $\df{G'}=3$ such that
$G'$ is not nontrivial, but it has no reduction to a smaller
snark with defect $3$. Lemmas~\ref{lem:2-cuts}
and~\ref{lem:4-cycles} imply that $G'$ has no $2$-edge-cuts and
no quadrilaterals. Since $G'$ is not nontrivial, it has a
cycle-separating $3$-edge-cut. By Lemma~\ref{lem:3-cuts}, one
of the resulting components must be an essential triangle,
which we denote by $T$. Since the latter is true for any
cycle-separating $3$-edge-cut in~$G'$, and because $G'$ has a
unique essential triangle according to
Lemma~\ref{lem:essential}, the graph $K=G'/T$ is cyclically
$4$-edge-connected and has $\df{K}\ge 4$. It remains to show
that $K$ has no quadrilaterals.

Suppose to the contrary that $K$ contains a quadrilateral $Q$.
The vertex $K$ obtained by the contraction of $T$ lies on $Q$,
therefore $G'$ has a $5$-cycle $D=(d_0d_2d_1d_3d_4)$ sharing an
edge with $T$, say $d_0$. Let $g$ and $h$ denote the other two
edges of $T$, with $g$ adjacent to $d_1$, and $h$ adjacent to
$d_4$. There is a $4$-edge-cut $S$ in $G'$ that separates $D\cup
T$ from the rest of~$G'$. Let $H$ be the other component of
$G'-S$. Observe that $S$ survives the contraction of~$T$, so
$S$ separates $Q$ from $H$ in $G'/T$ as well. Since $G'/T$  is
cyclically $4$-edge-connected, $S$ is an independent edge cut.
Let $s_1\in S$ be the edge of $T$ not adjacent to $d_0$, and
for $i\in\{2,3,4\}$ let $s_i$ be the edge of $S$ adjacent to
both $d_{i-1}$ and $d_i$, see Figure~\ref{fig:pentagon}.

\begin{figure}[h!]
 \centering
\includegraphics{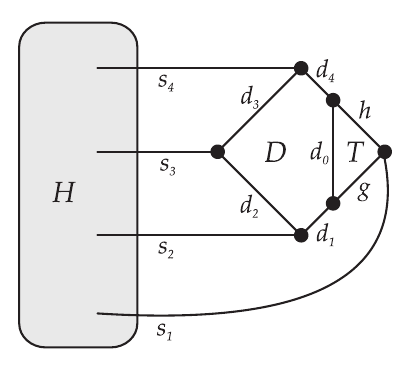}
\caption{The structure of $G'$ when the contraction of an
essential triangle creates a $4$-cycle} \label{fig:pentagon}
\end{figure}

Because $\df{G'}=3$, there is a $3$-array
$\mathcal{M}=\{M_1,M_2,M_3\}$ in $G'$ whose hexagonal core $C$
intersects~$T$. By Lemma~\ref{lem:essential}(ii), $C\cap
T$ consists of a single uncovered edge, so $C$ cannot be the
hexagon $(d_1d_2d_3d_4hg)$ fully contained in $D\cup T$.
Therefore $C$ intersects $S$. It is easy to see that $|C\cap
S|=2$. Without loss of generality we may assume that the common
edge of $C$ and $T$ is the edge $e_1$ of the standard hexagonal
core shown in Figure~\ref{fig:core3}. There are two
possibilities for the position of $e_1$: either $e_1$ is
adjacent to $s_1$, or not. If $e_1$ is not adjacent to $s_1$,
then $e_1=d_0$. If $e_1$ is adjacent to $s_1$, then
$e_1\in\{g,h\}$, and in view of symmetry we may assume that
$e_1=g$. Accordingly, we have two cases to consider.

\medskip\noindent
Case 1. $e_1=d_0$. In this case both $d_1$ and $d_4$ belong to
$C$; in view of symmetry we may clearly assume that $d_1=e_2$
and $d_4=e_0$. Moreover, $s_1$ does not belong to $C$, so
$C\cap S$ consists of two edges from $\{s_2,s_3,s_4\}$. In view
of symmetry we may assume that either $C\cap S=\{s_2,s_3\}$ or
$C\cap S=\{s_2,s_4\}$. The former possibility does not occur
because otherwise the edge $d_2$ would be a chord of $C$,
contrary to Theorem~\ref{thm:main}. Hence $C\cap
S=\{s_2,s_4\}$. Now $s_2=e_3$ and $s_4=e_5$, so the only edge
of $C$ in $H$ is $e_4$. Recall that $\phi(e_4)=23$. If we
change this colour to $2$ (or apply
Proposition~\ref{prop:core}~(iv)), we obtain a proper
$3$-edge-colouring of $H$. However, $H$ is now
$3$-edge-colourable, and the other component of $G'/T-S$ is a
quadrilateral. Therefore $G'/T$ is $3$-edge-colourable, and so
is~$G'$. This contradiction concludes Case~1.

\medskip\noindent
Case 2. $e_1=g$. In this case $d_1$ and $s_1$ belong $C$; in
view of symmetry we may clearly assume that $d_1=e_0$ and
$s_1=e_2$. It follows that $C\cap S=\{s_1,s_j\}$ for some
$j\in\{2,3,4\}$. We show that $j=2$. If $C\cap S=\{s_1,s_3\}$,
then $f_1=d_0$, $f_2=h$, and $f_5=d_3$. Since $\phi(f_1)=3$
and $\phi(f_2)=2$, we have $\phi(d_4)=1$. At the same
time $\phi(f_5)=1$, so $d_3$ and $d_4$ have the same colour,
which is impossible, because they are adjacent. Therefore $j\ne
3$. Next, if $C\cap S=\{s_1,s_4\}$, then $d_4=e_4$ and
$s_4=e_3$. In this situation, however, both $e_2$ and $e_3$
belong to $S$ although they are adjacent in $C$. As $S$ is
independent, this is impossible. Therefore $j\ne 4$, and we
conclude that $C\cap S=\{s_1,s_2\}$. Now $e_2=s_1$ and
$e_5=s_2$, so $e_3$ and $e_4$ belong to $H$. Recall that
$\phi(e_3)=\emptyset$ and $\phi(e_4)=23$. If we change
the colour of $e_3$ to $3$ and the colour of $e_4$ to $2$, we
get a $3$-edge-colouring of $H$. Again, this implies that $G'$
is $3$-edge-colourable, which is a contradiction. This
concludes Case~2 and proves that $K$ has no quadrilaterals. The
proof is complete.
\end{proof}

Our second theorem specifies conditions that must be satisfied
by a snark $K$ and a vertex $v$ in order for the inflation of
$v$ to decrease the defect of $K$ to $3$. The formulation
features an important concept of a cluster of $5$-cycles in a
snark that derives from relatively little known results of
K\'aszonyi \cite{Ka1-ortho}-\cite{Ka3-struct} and Bradley
\cite{Brad1}-\cite{Brad3} concerning the structure of
$3$-edge-colourings of graphs. A \emph{cluster of $5$-cycles}
in a cubic graph $G$, or simply a \emph{$5$-cluster} of $G$, is
an inclusion-wise maximal connected subgraph of $G$ formed by a
union of $5$-cycles. K\'aszonyi \cite{Ka2-nonplan, Ka3-struct}
and later Bradley \cite{Brad1} proved that for each edge $e$ of
a snark $G$ there exists a nonnegative integer $\psi_G(e)$ such
that the number of $3$-edge-colourings of $G\sim e$ equals
$18\cdot\psi_G(e)$. We refer to the function $\psi_G\colon
E(G)\to\mathbb{N}$ as the \emph{K\'aszonyi function} for~$G$.
In passing we mention that the K\'aszonyi function for the
Petersen graph identically equals~$1$, see
\cite[Theorem~3.5]{Brad3}.

One of the most remarkable properties of the K\'aszonyi
function is that it is constant on each $5$-cluster (see
\cite{Brad1}, \cite{Ka3-struct}, or the survey \cite{Brad3}). A
$5$-cluster $H$ of a snark $G$ will be called \emph{heavy} if
$\psi_G(e)>0$ for each edge $e$ of $H$; otherwise, $H$ will be
called \emph{light}. Equivalently, a $5$-cluster $H$ is heavy
if and only if $G\sim e$ is $3$-edge-colourable for each edge
$e$ of $H$. In this context it is useful to recall that if
$e=uv$, then $G\sim e$ is $3$-edge-colourable if and only if
$G-\{u,v\}$ is, see \cite[Proposition~4.2]{NS-decred}.

\begin{theorem}\label{thm:essential}
Let $K$ be a nontrivial snark with $\df{K}\ge 4$, let $v$ be a
vertex of $K$, and let $G$ be the snark created from $K$ by
inflating $v$ to a triangle. Then $\df{G}=3$ if and only if $v$
lies in a heavy cluster of $5$-cycles of $K$.
\end{theorem}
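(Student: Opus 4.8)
The plan is to translate the existence of a hexagonal core of $G$ meeting the inflated triangle into a colouring condition on $K$, and then to recognise that condition as heaviness of the $5$-cluster through $v$.

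First I would analyse the Fano flow near the triangle $T$ produced by inflating $v$. Write $T=(abc)$ and let $x_a,x_b,x_c$ be the three edges of $G$ leaving $T$, i.e.\ the images of the three edges of $K$ at $v$. Since $\df{K}\ge 4>3$, the triangle $T$ is essential, so by Lemma~\ref{lem:essential}(i)--(ii) \emph{every} hexagonal core of $G$ meets $T$ in exactly one uncovered edge, say $ab$; fix the notation of Figure~\ref{fig:core3} so that $ab=e_1$. Reading the Fano colouring around $T$ (cf.\ Figure~\ref{fig:essential3}) and applying Kirchhoff's law at $a,b,c$ shows that the second core edge at $a$ and at $b$ are in fact $x_a=e_0$ and $x_b=e_2$, while the triangle edges $ca,cb$ are the leaving edges $f_1,f_2$ and $x_c$ is simply covered. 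Hence contracting $T$ carries the core $C=(e_0e_1\dots e_5)$ onto a $5$-cycle $D=(vw_1w_2w_3w_4)$ of $K$ through $v$ that uses two of the three edges at $v$, the third being $x_c$. Computing the flow values of the five edges of $K$ leaving $V(D)$ then yields the decisive pattern: under the induced proper $3$-edge-colouring of the $5$-pole $N:=K-V(D)$ the three legs at $v,w_2,w_3$ carry one common colour, while the legs at the two neighbours $w_1,w_4$ of $v$ carry the other two colours. By Theorem~\ref{thm:main}, a hexagonal core of $G$ meeting $T$ is thus \emph{equivalent} to a $5$-cycle $D\ni v$ together with a proper $3$-edge-colouring of $N$ whose tripled colour is supported exactly on the legs at $v,w_2,w_3$.

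For the forward implication I would assume $\df{G}=3$, take such a core, pass to the associated $D$ and colouring of $N$, and extend it across the path $w_1vw_4$ reintroduced in $K-\{w_2,w_3\}$: the prescribed leg colours force the colours of $vw_1,vw_4$ uniquely, and one checks directly that the extension is proper and meets the isochromatic constraints at $w_2,w_3$. This gives a proper $3$-edge-colouring of $K-\{w_2,w_3\}$, so $K\sim w_2w_3$ is colourable and $\psi_K(w_2w_3)>0$. As $w_2w_3$ lies on $D$, hence in the $5$-cluster $\Sigma$ containing $v$, and the K\'aszonyi function is constant on $5$-clusters, every edge of $\Sigma$ has positive value, so $\Sigma$ is heavy.

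For the converse I would start from a heavy cluster $\Sigma\ni v$, fix a $5$-cycle $D=(vw_1w_2w_3w_4)\subseteq\Sigma$, and try to build the required colouring of $N$. Heaviness gives that $K-\{w_2,w_3\}$ is colourable, and by the equivalence above the task is to find a colouring in which the three edges $x_c$, $w_1w_2$ and $w_3w_4$ receive a common colour; the induced colouring of $N$ then has its tripled colour precisely on the legs at $v,w_2,w_3$. A local analysis at the path $w_1vw_4$ shows that in \emph{any} colouring of $K-\{w_2,w_3\}$ the support of the tripled colour is forced to be one of only four $3$-subsets of the legs, each containing the leg at $v$; the admissible target is the support $\{v,w_2,w_3\}$. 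The main obstacle, and the heart of this direction, is to guarantee that some colouring of $K-\{w_2,w_3\}$ realises exactly this support rather than one of the three competitors. I expect to resolve it by a Kempe-chain recolouring inside $N$ that swaps the roles of the legs at $w_1$ and $w_2$ (respectively $w_4$ and $w_3$) while fixing the leg at $v$, invoking the full strength of heaviness — the colourability of $K-\{p,q\}$ for the remaining edges $pq$ of $D$ — together with the structural results of K\'aszonyi and Bradley on $3$-edge-colourings within a $5$-cluster, which is exactly why the hypothesis is phrased in terms of clusters rather than a single edge.
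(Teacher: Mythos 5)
Your proposal follows essentially the same route as the paper: extract from a hexagonal core meeting the essential triangle a $5$-cycle $D$ through $v$ together with a $3$-edge-colouring of $K-E(D)$ whose tripled leg-colour sits at $v$ and at the two vertices of $D$ at distance two from $v$, deduce heaviness from the constancy of the K\'aszonyi function on $5$-clusters, and reverse the construction for the converse. The existence statement you single out as the heart of the converse --- that some colouring of $K-E(D)$ places the two non-tripled colours on the legs at the neighbours of $v$ --- is precisely what the paper imports from Lemmas~6.1, 6.2 and~6.3(iii) of Nedela and \v{S}koviera \cite{NS-decred}, so your planned Kempe-chain/K\'aszonyi--Bradley argument is the right instinct and that step closes by citation rather than by new work.
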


\begin{proof}
$(\Rightarrow)$ Assume that $\df{G}=3$. Let $T$ denote the
triangle of $G$ obtained by the inflation of a vertex $v$ of
$K$. We show that $v$ belongs to a heavy $5$-cluster of $K$.
Since $\df{K}\geq 4$ and $\df{G}=3$, the triangle $T$ is
essential. Let $C$ be a hexagonal core of~$G$.
Lemma~\ref{lem:essential}(ii) implies that $C\cap T$ consists of a
single uncovered edge, which we may assume to be the edge $e_1$
indicated in Figure~\ref{fig:core3}. It follows that the Fano
colouring $\phi$ around $C$ is as illustrated in
Figure~\ref{fig:essential3}. Let us contract $T$ back to the
vertex $v$ and keep the colours of the edges of $K$. Clearly,
$C/T=(e_0e_2e_3e_4e_5)$ is a $5$-cycle containing $v$. To prove
that $v$ belongs to a heavy $5$-cluster it is sufficient to
show that $K\sim e_0$ admits a $3$-edge-colouring. Recall that
$e_0=v_0v_1$ and consider the graph $K-\{v_0,v_1\}$. If we set
$\phi'(e_3)=3$ and $\phi'(e_4)=2$, we obtain a proper
$3$-edge-colouring of $K-\{v_0,v_1\}$. We infer that $K\sim
e_0$ is $3$-edge-colourable, too, and therefore $v$ belongs to
a heavy $5$-cluster of $K$.

$(\Leftarrow)$ For the converse, assume that $v$ belongs to a
heavy $5$-cluster $H$ of $K$. Consider an (induced) $5$-cycle
$D=(d_0d_1d_2d_3d_4)$ of $H$ such that $v$ lies in $D$ and is
incident with the edges $d_4$ and~$d_0$. Since $H$ is heavy,
removing any edge of $D$ together with its end-vertices
produces a $3$-edge-colourable graph. Hence $K-E(D)$ is
$3$-edge-colourable, as well. Let $g_i$ be the edge of
$\delta(D)$ adjacent to $d_{i-1}$ and $d_i$, with indices taken
modulo~$5$. The edge of $\delta(D)$ incident with $v$ is
therefore~$g_0$. From the Parity Lemma we deduce that every
$3$-edge-colouring $\sigma$ of $K-E(D)$ colours three of the
edges in $\delta(D)$ with the same colour, say~$1$, and the
remaining two with colours $2$ and $3$, respectively. Moreover,
if $\sigma(g_i)=2$ and $\sigma(g_j)=3$, then $g_i$ and $g_j$
are not adjacent to the same edge of $D$, otherwise $\sigma$
would extend to a $3$-edge-colouring of $K$. Even more, as can
be deduced from Lemmas~6.1, 6.2, and~6.3~(iii) of
\cite{NS-decred}, for any two edges $g_i$ and $g_j$ of
$\delta(D)$ that are not incident with the same edge of $D$
there exists a $3$-edge-colouring $\tau$ of $K-E(D)$ such that
$\tau(g_i)=2$ and $\tau(g_j)=3$. Set $\tau(g_1)=2$ and
$\tau(g_4)=3$, so that all the remaining edges of $\delta(D)$
receive colour $1$, see Figure~\ref{fig:5to6} (left).
\begin{figure}[h!]
 \centering
\includegraphics[width=0.6\textwidth]{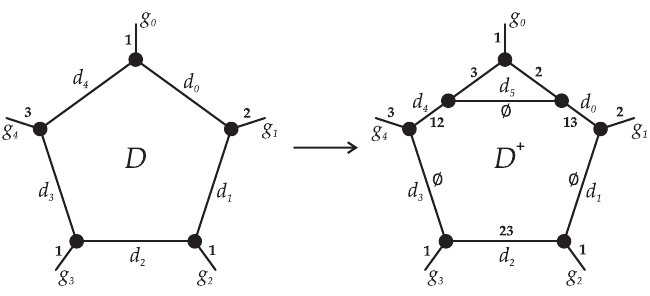}
\caption{Creating an essential triangle by inflating a vertex on a
non-removable $5$-cycle} \label{fig:5to6}
\end{figure}
Now, let
us inflate $v$ into a triangle $T$, thereby producing the graph
$G$. For each edge of $K$ incident with $v$ there is a unique
corresponding edge of $G$ leaving the triangle~$T$; we let the
latter edge keep the name of the former. Having made this
agreement, let $d_5$ denote the edge of $T$ adjacent to both
$d_0$ and $d_4$. Clearly, $D^+=(d_0d_1d_2d_3d_4d_5)$ is an
induced $6$-cycle of $G$. Now we extend the colouring $\tau$ of
$K-E(D)$ to a colouring of $G$. We start by setting
$\tau(d_4)=12$,  $\tau(d_5)=\emptyset$, and $\tau(d_0)=13$.
This choice further enables setting $\tau(d_2)=23$ and
$\tau(d_1)=\tau(d_3)=\emptyset$, as well as assigning colours
$2$ and $3$ to the remaining edges of $T$ appropriately, see
Figure~\ref{fig:5to6} (right). It is easy to check that $\tau$
induces a $3$-array $\mathcal{N}$ of $G$ with
$\core(\mathcal{N})=D^+$. Therefore $\df{G}=3$, as required.
\end{proof}

Given a cubic graph $G$ and a vertex $v$ of $G$ we let $G^v$
denote the graph formed from $G$ by inflating $v$ to a
triangle. We now show that a single vertex inflation can
decrease defect from an arbitrarily large value to the minimal
possible value of $3$.

\begin{theorem}\label{thm:df_n3}
For every integer $n\geq 3$ there exists a nontrivial snark $G$
with $\df{G}\ge n$ which contains a vertex whose inflation to
a triangle produces a snark with defect $3$.
\end{theorem}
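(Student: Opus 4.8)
The plan is to deduce the statement from Theorem~\ref{thm:essential}. If $K$ is a nontrivial snark with $\df{K}\ge 4$ and $v$ is a vertex lying in a heavy cluster of $5$-cycles, then inflating $v$ to a triangle yields a snark of defect $3$; so it suffices to construct, for every $n$, a nontrivial snark $K_n$ with $\df{K_n}\ge n$ that contains such a vertex, and then set $G=K_n$. Since $\df{K_4}\ge 4\ge 3$, a single family $\{K_n\}_{n\ge 4}$ also witnesses the case $n=3$.

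Before building the family it pays to understand where the large defect can \emph{not} come from. Suppose $K$ has a heavy $5$-cluster, and pick an edge $uv$ of one of its $5$-cycles with $K-\{u,v\}$ being $3$-edge-colourable. Because $uv$ is an edge of the snark $K$, the $4$-pole $K-\{u,v\}$ is isochromatic, so in every $3$-edge-colouring the two edges formerly incident with $u$ share a colour $a$ and the two formerly incident with $v$ share a colour $b$. Choosing a colour $d\notin\{a,b\}$ (which exists, as $a,b$ take at most two values), its colour class misses no vertex of $K-\{u,v\}$ and hence, together with $uv$, forms a perfect matching of $K$; the complementary $2$-factor consists of even circuits together with at most two circuits closed off at $u$ and $v$, whence $\omega(K)\le 2$. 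Consequently the Jin--Steffen bound $\df{K}\ge 3\omega(K)/2$ \cite{JS} is useless here, and the large defect has to be forced by girth rather than by oddness.

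This dictates the construction. By Steffen~\cite{S2} every snark satisfies $\df{G}\ge\girth(G)/2$, and by Kochol~\cite{Ko} there are nontrivial snarks of arbitrarily large girth; I would fix one such snark $S$ with $\girth(S)$ large, so that $\df{S}$ exceeds any prescribed bound. I would then perform a single \emph{local} surgery inside a small ball of $S$ that creates one induced $5$-cycle $D$ carrying a removable adjacent pair, so that $D$ lies in a heavy $5$-cluster of the resulting graph $K_n$, while introducing no cycle-separating cut of size less than $4$. Concretely, one replaces a short piece of $S$ by a Petersen-type gadget, whose K\'aszonyi function is positive (indeed identically $1$ for the Petersen graph, as recalled above), so that some edge $uv$ of $D$ satisfies $\psi_{K_n}(uv)>0$, i.e.\ $K_n-\{u,v\}$ is $3$-edge-colourable. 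Routine checks then show that $K_n$ is a nontrivial snark: it is cyclically $4$-edge-connected because the gadget is attached without a small cut, and its girth is exactly $5$ because the only short cycle is $D$.

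The main obstacle is to prove that the local surgery costs only a bounded amount of defect, i.e.\ that $\df{K_n}\ge\df{S}-O(1)$, even though $\girth(K_n)=5$ makes Steffen's bound vacuous for $K_n$ itself. My plan is to argue at the level of optimal cores: by Proposition~\ref{prop:core} every optimal core of $K_n$ is a union of circuits and subdivided cubic graphs in which uncovered and doubly covered edges alternate, and such a core, once it leaves the small ball where the surgery was performed, traces a subgraph of the high-girth snark $S$ and must therefore be long. Quantifying this -- showing that an optimal $3$-array of $K_n$ can be transferred, at the cost of only boundedly many uncovered edges, into a $3$-array of $S$ -- would yield $\df{K_n}\ge\df{S}-O(1)$, and choosing $\girth(S)$ large enough gives $\df{K_n}\ge n$. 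Controlling how a core of $K_n$ threads through the implanted gadget is the delicate point; the remaining tasks -- cyclic $4$-edge-connectivity, girth $5$, and heaviness of the cluster through $D$ -- are routine given the rigidity of the Petersen gadget.
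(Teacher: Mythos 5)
Your overall strategy matches the paper's: invoke Theorem~\ref{thm:essential} and construct, for each $n$, a nontrivial snark of defect at least $n$ containing a vertex in a heavy $5$-cluster, using high-girth snarks as the source of large defect and a Petersen-based gadget to supply the heavy $5$-cluster. Your observation that such a snark necessarily has oddness at most $2$, so that girth rather than oddness must force the defect, is correct and is a nice piece of motivation. However, the proof has a genuine gap at exactly the point you flag as ``delicate'': you never establish that the surgered graph $K_n$ still has defect at least $n$. The claim that a local surgery costs only $O(1)$ defect cannot be taken on faith -- the very theorem you are proving shows that an extremely local modification (inflating one vertex to a triangle) can collapse the defect from an arbitrarily large value all the way down to $3$. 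Concretely, once you implant a Petersen-type gadget, the resulting graph contains many short cycles ($5$-, $6$-, $8$-, $9$-cycles) inside the gadget, and you must rule out the possibility that an optimal core of $K_n$ is a short circuit living entirely inside the gadget; if a $6$-cycle of the gadget were a core, the defect would be $3$. The paper's proof devotes a full case analysis (Claim~2 there) to excluding precisely this, and the argument depends essentially on the specific architecture chosen: a $6$-pole $Z$ cut out of the Petersen graph along three independent edges of a hexagon, glued to \emph{three} copies of an isochromatic $(2,2)$-pole obtained from a girth-$2n$ snark, so that the isochromatic property forces a contradiction whenever a core tries to stay inside $Z$. Only after that does the length bound you sketch (every core circuit meets a high-girth piece, hence has length at least $2n$, hence contributes at least $n$ uncovered edges by Proposition~\ref{prop:core}(iii)) go through.

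A secondary but real issue is heaviness of the $5$-cluster. You call this ``routine given the rigidity of the Petersen gadget,'' but $K_n\sim uv$ being $3$-edge-colourable is a global condition: it requires the complement of the gadget in $K_n$ to admit a $3$-edge-colouring compatible with the boundary colours forced by deleting $u$ and $v$ inside the gadget. For an arbitrary high-girth snark $S$ with a piece excised, there is no a priori reason this holds. The paper again leans on the isochromatic $(2,2)$-poles, which admit colourings with all four dangling edges equally coloured, to produce the required colouring of $G_n\sim e_0$ (Claim~4). In short, your proposal correctly identifies the shape of the argument but leaves unexecuted the two steps that constitute essentially all of the work, and the unspecified ``single local surgery inside a small ball of $S$'' would need to be replaced by a much more carefully engineered construction for those steps to succeed.
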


\begin{proof}
In \cite[Theorems~5.1-5.3]{KMNS-girth} it was proved that for
every integer $n\ge3$ there exists a cyclically
$5$-edge-connected snark $H=H_{2n}$ with girth $2n$ which
contains a pair of adjacent vertices $u$ and $v$ such that
$H-\{u,v\}$ is $3$-edge-colourable. By \cite[Corollary~2.5]{S2}
(see also \cite[Proposition~4.4]{KMNS-girth}), the defect of
$H$ is at least $n$. Let $I$ denote the isochromatic
$(2,2)$-pole arising from $H_{2n}$ by removing the vertices $u$
and $v$ and forming connectors from the semiedges formerly
incident with the same vertex. To construct $G=G_n$, we take
three copies $I_0$, $I_1$, and $I_2$ of $I$ and a $6$-pole $Z$
arising from the Petersen graph $Pg$ by severing three
independent edges $p_0$, $p_1$, and $p_2$ on a $6$-cycle. We
turn $Z$ into a $(2,2,2)$-pole with connectors $S_1$, $S_2$,
and $S_3$ by forming each $S_i$ from the two half-edges of
$p_i$, denoted by $p_{i1}$ and $p_{i2}$. Next, for each
$j\in\{0,1,2\}$,  we join the input connector of $I_j$ to
$S_j$; we keep the notation $p_{j1}$ and $p_{j2}$ for the
resulting edges which connect $I_j$ to $Z$. Finally, we match
the semiedges of the three output connectors in such a way that
the two semiedges of each output connector lead to two other
output connectors, see Figure~\ref{fig:gn_cluster}(left). The
result is the required graph $G=G_n$.

We proceed to proving that $G_n$ has the required properties.
During our analysis we refer to
Figure~\ref{fig:gn_cluster}(right) for the notation of vertices
and edges of $G_n$. In particular, $z$ denotes the central
vertex of $Z$, the edges incident with $z$ are $e_0$, $e_1$,
and $e_2$, and $(f_0f_1\ldots f_8)$ is the $9$-cycle obtained
by removing $z$ from $Z$. The edges of $\delta(Z)$ leave $Z$ in
the order $p_{01}, p_{22}, p_{11}, p_{02}, p_{21}, p_{12}$
determined by a cyclic orientation of $Z-z$.

\medskip\noindent{}Claim 1. $G_n$ is a nontrivial snark.

\medskip
\noindent\emph{Proof of Claim 1.} First we prove that
$G_n$ is a snark. Suppose that $G_n$ admits a
$3$-edge-colouring $\phi$. Since each $I_j$ is an
isochromatic $(2,2)$-pole, the edges $p_{j1}$ and $p_{j2}$
receive the same colour from every $3$-edge-colouring of $G_n$.
Recall, that $p_{j1}$ and $p_{j2}$ arise by severing the edge
$p_j$ of $Pg$. It follows that the restriction of $\phi$ to
$Z$ extends to a $3$-edge-colouring of $Pg$, which is a
contradiction. Thus $G_n$ is a snark.

It is clear from the construction that $G_n$ has girth at least
$5$. We need to check that $G_n$ is cyclically
$4$-edge-connected. To this end, it suffices to realise that
the building blocks of $G_n$ -- the $6$-pole $Z$ and the
$4$-poles $I_0$, $I_1$, and $I_2$ -- arise from nontrivial
snarks and that the way in which the building blocks have been
combined in $G_n$ guarantees that no cycle-separating edge cut of
size smaller than $4$ can be created. Therefore every
cycle-separating edge cut in $G_n$ has size at least $4$.  In
fact, all minimum size cycle-separating edge cuts in $G_n$
separate one of the $4$-poles $I_j$ from the rest of $G_n$;
hence, the cyclic connectivity of $G_n$ equals $4$.

\begin{figure}[h!]
\centering
\subfigure{
\includegraphics[width=0.45\textwidth]{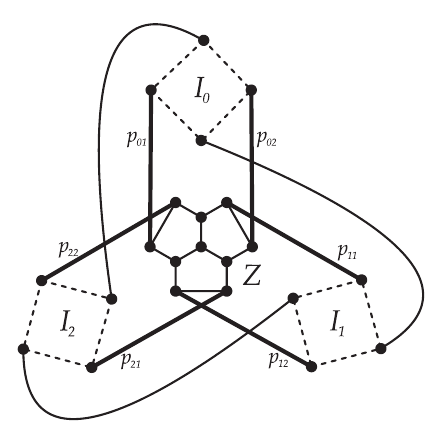}}
\subfigure{
\includegraphics[width=0.45\textwidth]{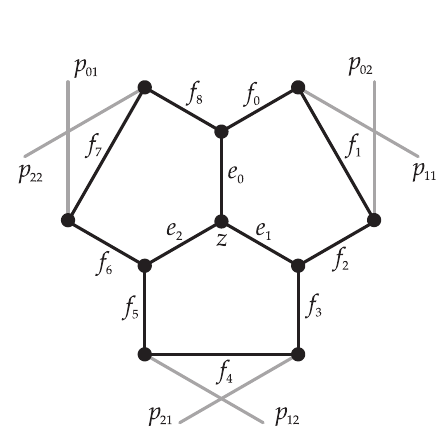}}
\caption{The graph $G_n$ and its heavy cluster $Z$}
\label{fig:gn_cluster}
\end{figure}

\medskip

\noindent{}Claim 2. $G_n$ has no $3$-array whose core is fully
contained in $Z$.

\medskip\noindent
\emph{Proof of Claim 2.} Suppose to the contrary that $G_n$ has a
$3$-array $\mathcal{M}$ with core $C\subseteq Z$. It follows
that all the edges of $G_n-Z$ as well as those belonging to the
edge cut $\delta(Z)$ connecting $Z$ to $G_n-Z$ are simply
covered. There are two cases to consider.

\medskip\noindent
Case 1.\emph{ $C$ contains a triply covered edge.} If $x$ is a
triply covered edge of $C$, then the four edges adjacent to $x$
are uncovered, so $x$ cannot be adjacent to an edge of
$\delta(Z)$. Therefore $x$ is one of the edges $e_0$, $e_1$, or
$e_2$ incident with the central vertex $z$ of $Z$, say $e_0$.
Assume that the $9$-cycle $(f_0f_1\ldots f_8)$ has its edges
enumerated cyclically in such a way that $f_0$ is adjacent to
$e_0$ and $f_2$ is adjacent to $e_1$, see
Figure~\ref{fig:gn_cluster}(right). All four edges adjacent to
$e_0$ are uncovered, in particular, so are $f_0$ and $e_1$.
Since the edges of $\delta(Z)$ are simply covered, it follows
that $f_1$ is doubly covered and $f_2$ is uncovered. Now, $e_1$
and $f_2$ are adjacent uncovered edges, so $f_3$ must be triply
covered. However, $f_3$ is adjacent to an edge of $\delta(Z)$,
which is simply covered, and we have arrived at a
contradiction.

\medskip\noindent
Case 2.\emph{ $C$ contains no triply covered edge.}  In this
case the core $C$ is regular, and hence, by
Proposition~\ref{prop:core}~(i), it is a collection of disjoint
even circuits. Observe that every circuit contained in $Z$ is
either a pentagon, an $8$-gon, or a $9$-gon. Therefore $C$ must
be a single $8$-gon. Without loss of generality we may assume
that $C=(e_0f_0f_1f_2f_3f_4f_5e_2)$. Now, one of $e_0$ and
$e_2$ must be uncovered, say $e_0$. It follows that $f_0$ and
$f_4$ are doubly covered and $f_8$ is simply covered. Consider
the Fano colouring $\phi$ of $G_n$ associated with the $3$-array
$\mathcal{M}$. Without loss of generality we may assume that
$\phi(f_0)=12$. This implies that $\phi(f_8)=\phi(p_{11})=3$,
and since $I_1$ is isochromatic, we infer that
$\phi(p_{12})=\phi(p_{11})=3$. Note that $f_4$ is doubly
covered and is adjacent to both $\phi(p_{12})$ and
$\phi(p_{21})$, so $\phi(f_4)=12$, and hence $\phi(p_{21})=3$.
Using the isochromatic property of $I_2$ we conclude that
$\phi(p_{22})=3$, which is impossible because $p_{22}$ is
adjacent to $f_8$ and $\phi(f_8)=3=\phi(p_{22})$. This
contradiction completes the proof of Claim~2.

\medskip
\noindent{}Claim 3. $\df{G_n}\ge n$.

\medskip\noindent
\emph{Proof of Claim 3.} Let $D$ be a circuit of the core of
any $3$-array of $G_n$. By Claim~2, $D$ must intersect at least
one of $I_0$, $I_1$, and $I_2$. If $D$ is contained in some
$I_j$, then its length is clearly at least $2n$. Otherwise, $D$
contains at least $2n-2$ vertices of $I_j$ and at least two
vertices of $Z$, and again its length is at least $2n$. Recall
that the edges of $D$ are of three kinds -- uncovered, doubly
covered and triply covered. Moreover, by
Proposition~\ref{prop:core}(iii), the union of doubly and
triply covered edges in $D$ forms a matching of $D$. Therefore,
there are at least $n$ uncovered edges in $D$. In other words,
the defect of $G_n$ is at least $n$.

\medskip
\noindent{}Claim 4. \emph{The $5$-cluster $Z$ is heavy.}

\medskip\noindent
\emph{Proof of Claim 4.} It is sufficient to show that $G_n\sim
x$ is $3$-edge-colourable for some edge $x$ of $Z$, say
$x=e_0$. Let $J$ denote the $6$-pole obtained from $G_n$ by
removing the vertices of the $6$-pole $Z$, so that $G_n=J*Z$.
Recall that every isochromatic $4$-pole admits a
$3$-edge-colouring where all four dangling edges receive the
same colour, see for example \cite[Section~3]{ChS}). It follows
that $J$ admits a $3$-edge-colouring where all six dangling
edges receive the same colour. It is easy to check that such an
assignment of colours to the dangling edges of $Z$ extends to a
$3$-edge-colouring of $Z\sim e_0$; we leave the details to the
reader. By combining these two $3$-edge-colourings we obtain
one for $G_n\sim e_0$. This proves that $Z$ is a heavy
$5$-cluster.

\medskip

Now we can finish the proof. Claim~4 states that $\df{G_n}\ge
n$. On the other hand, Theorem~\ref{thm:essential} implies that
the inflation of every vertex $v$ of $Z$  produces a graph
$G_n^v$ with $\df{G_n^v}=3$. Both required properties of $G_n$
are verified, and the proof is complete.
\end{proof}

\begin{figure}[h!]
\centering
\includegraphics[width=0.42\textwidth]{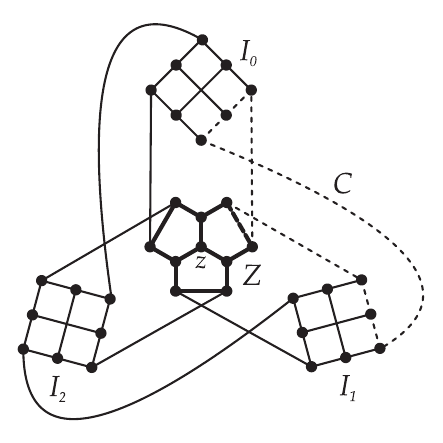}
\caption{The smallest nontrivial snark with $\text{df}\ge 4$ that contains a
heavy $5$-cluster}
\label{fig:snark34_43}
\end{figure}

\begin{example}~\label{ex:df43}
The smallest example of a nontrivial snark with defect
greater than $3$ containing a vertex whose inflation produces a
snark with defect $3$ has $34$ vertices; it is depicted in
Figure~\ref{fig:snark34_43}. It structure is similar to the
graphs constructed in Theorem~\ref{thm:df_n3}. The isochromatic
$(2,2)$-poles $I_0$, $I_1$, and $I_2$ arise from the Petersen
graph by removing two adjacent vertices. The defect of this
snark is $4$, and the corresponding core $C$ is an $8$-cycle
indicated in Figure~\ref{fig:snark34_43} by dashed edges.
\end{example}

\section{Berge covers of snarks with defect~3}
\label{sec:Berge}

\noindent{}To justify the importance of the results of the
previous two sections, we briefly indicate how they apply to
verifying Berge's conjecture for snarks with defect $3$. We
show that every bridgeless cubic graph with defect $3$ can have
its edges covered with four or five perfect matchings and we
determine those that require five. In other words, we
completely determine the perfect matching index of defect $3$
graphs. A detailed proof will appear in \cite{KMNS-Bergegen}.
Recall that the \emph{perfect matching index} (also known as
the \textit{excessive index}) of a bridgeless cubic graph $G$,
denoted by $\pi(G)$, is the smallest number of perfect
matchings that cover all the edges of~$G$. With this
definition, Berge's conjecture states that $\pi(G)\le 5$ for
every bridgeless cubic graph $G$. Note that $\pi(G)\ge 3$, and
the equality holds if and only if $G$ is $3$-edge-colourable.

\medskip

The following theorem is proved in \cite{KMNS-Berge}.

\begin{theorem}\label{thm:4-ec}
Let $G$ be a cyclically $4$-edge-connected cubic graph with
defect~$3$. Then $\pi(G)=4$, unless $G$ is the Petersen graph.
\end{theorem}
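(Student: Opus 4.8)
The plan is to prove the two bounds $\pi(G)\ge 4$ and $\pi(G)\le 4$ separately, with all the work lying in the second. Since $G$ is a snark it is not $3$-edge-colourable, so $\pi(G)\ge 4$, while the exceptional value for the Petersen graph is accounted for by the classical fact $\pi=5$. For the upper bound I would invoke Theorem~\ref{thm:main}: fix an optimal $3$-array $\mathcal{M}=\{M_1,M_2,M_3\}$ whose core is a hexagonal core $C$, so that the uncovered edges form a matching $\{e_1,e_3,e_5\}$ of three alternate edges of $C$, with the Fano colouring around $C$ as in Figure~\ref{fig:core3}. If one can produce a single perfect matching $M_4$ containing $e_1,e_3,e_5$, then $\{M_1,M_2,M_3,M_4\}$ covers $E(G)$ and $\pi(G)\le 4$. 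Hence the entire problem reduces to finding such an $M_4$.

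The first key step is to translate this into a matching question on a smaller graph. Forcing $e_1,e_3,e_5\in M_4$ saturates all six vertices of $C$ inside $C$ itself, so no edge of $\delta(C)$ can lie in $M_4$; thus $M_4$ exists if and only if $H=G-V(C)$ has a perfect matching. Moreover each $M_i$ restricts to a matching $N_i=M_i\cap H$ that saturates all of $H$ except the two endpoints in $H$ of the two edges of $\delta(C)$ coloured $i$ (from the pattern $3,3,2,2,1,1$ around $C$); these two vertices are distinct, as otherwise one vertex would carry two edges of $M_i$. So $H$ possesses near-perfect matchings missing exactly two vertices, and since $|V(H)|=|G|-6$ is even, the deficiency of $H$ is either $0$ or exactly $2$.

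The engine of the argument is then a Gallai--Edmonds analysis under the assumption that $H$ has \emph{no} perfect matching, i.e. deficiency $2$. With decomposition $(D,A,C(H))$, deficiency $2$ gives that $H[D]$ has exactly $|A|+2$ factor-critical (odd) components. For each such component $K$, cubicity forces $|\delta_G(K)|$ to be odd; in $H$ the component $K$ sends edges only to $A$, and in $G$ possibly also along $\delta(C)$, whence $|\delta_G(K)|=(K\!\to\!A)+(K\!\to\!V(C))$. Summing over the $|A|+2$ components and using $|\delta_G(K)|\ge 3$ (a cyclically $4$-edge-connected cubic graph is $3$-edge-connected) against the upper bound $3|A|+6$ coming from the $\le 3|A|$ edges meeting $A$ and the six edges of $\delta(C)$, one gets $3(|A|+2)\le\sum_K|\delta_G(K)|\le 3|A|+6$, forcing equality everywhere. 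Consequently each $|\delta_G(K)|=3$, $C(H)=\emptyset$, every edge of $\delta(C)$ lands in $D$, and every edge at $A$ goes to $D$. Since any $K$ with $\ge 3$ vertices is $2$-edge-connected and hence contains a cycle, a $3$-edge-cut $\delta_G(K)$ would be cycle-separating (the other side contains $C$), contradicting cyclic $4$-edge-connectivity; so each component is a single vertex. This shows $H=G-V(C)$ is bipartite between an independent set $D$ of size $|A|+2$ and an independent set $A$, with all of $\delta(C)$ incident to $D$.

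It remains to pin down the Petersen graph, and I expect this to be the main obstacle. When $|A|=1$ we have $H\cong K_{1,3}$, $|V(G)|=10$, and the configuration is rigid enough to force $G$ to be the Petersen graph, for which every hexagonal core yields $K_{1,3}$ and indeed $\pi=5$. The difficulty is to exclude $|A|\ge 2$: here the extra vertices of the bipartite block give two kinds of freedom, and one must show that at least one of them always applies. Either the added slack produces a cycle-separating edge-cut of size at most $3$ (contradicting cyclic $4$-edge-connectivity), or the colour pattern on $\delta(C)$ extends through $H$ to a proper $3$-edge-colouring of $G$ (contradicting that $G$ is a snark); an efficient way to realise the second possibility is to pass to a different optimal $3$-array whose hexagonal core $C'$ is placed so that $G-V(C')$ \emph{is} matchable. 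Proving that $|A|\ge 2$ is incompatible with simultaneous uncolourability and cyclic $4$-edge-connectivity --- equivalently, that the Petersen graph is the only snark in which \emph{every} hexagonal core obstructs the fourth matching --- is precisely the step where the rigidity singling out the Petersen graph must be extracted, and I would expect the bulk of the technical effort to concentrate there.
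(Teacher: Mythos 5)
The paper does not actually prove Theorem~\ref{thm:4-ec} in this text; it is imported from \cite{KMNS-Berge}, so there is no internal proof to compare your argument against. Judged on its own, your reduction and your Gallai--Edmonds computation are correct as far as they go: a fourth perfect matching covering the three uncovered edges exists iff $H=G-V(C)$ has a perfect matching; the restrictions $M_i\cap H$ show $\mathrm{def}(H)\in\{0,2\}$; and if $\mathrm{def}(H)=2$, the count $3(|A|+2)\le\sum_K|\delta_G(K)|\le 3|A|+6$ (valid because cyclic $4$-edge-connectivity of a cubic graph forces $3$-edge-connectivity) does collapse everything to: $C(H)=\emptyset$, every component of $H[D]$ a single vertex (a factor-critical component on $\ge 3$ vertices would give a cycle-separating $3$-cut), $H$ bipartite between independent sets $D$ and $A$ with $|D|=|A|+2$, and all six edges of $\delta(C)$ ending in $D$. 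This is a genuinely useful structural conclusion.

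The gap is that the theorem is not yet proved at the point where you stop. First, the case $|A|\ge 2$ is the entire content of the statement and you only list two mechanisms (a small cycle-separating cut, or an extension of the colouring of $\delta(C)$ through the bipartite block $H$ to a $3$-edge-colouring of $G$) together with the assertion that one of them ``always applies''; no argument is given, and the fallback of switching to a different optimal $3$-array makes matters harder, not easier, since your construction of $M_4$ is only a sufficient condition for $\pi(G)=4$ --- you would then have to show that \emph{every} hexagonal core of a non-Petersen graph admits a matchable complement, or find a four-matching cover by other means. Second, even the base case $|A|=1$ is asserted rather than proved: you must show that a cyclically $4$-edge-connected snark on $10$ vertices consisting of a hexagon with a claw attached by six independent edges is the Petersen graph (this is true, e.g.\ because adjacent hexagon vertices cannot share a claw-leaf neighbour without creating a triangle and hence a cycle-separating $3$-cut, which pins the incidence pattern down, but it needs to be said). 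Until the $|A|\ge 2$ analysis is carried out --- whether by a colouring argument on the bipartite block, by exhibiting a forbidden small cut, or by invoking a classification of the possible configurations --- the proposal is a correct reduction plus an unproven core claim, not a proof.
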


In order to be able to discuss the general situation in the
class of defect $3$ graphs we will make use of the following
two well-known operations. Let $G$ and $H$ be cubic graphs with
distinguished edges $e$ and $f$, respectively. We define a
$2$-sum $G\oplus_2 H$ to be a cubic graph obtained by deleting
$e$ and $f$ and connecting the $2$-valent vertices of $G$ to
those of~$H$. If instead of distinguished edges we have
distinguished vertices $u$ and $v$ of $G$ and~$H$,
respectively, we can similarly define a $3$-sum $G\oplus_3 H$.
We simply remove $u$ and $v$ and join the $2$-valent vertices
of $G-u$ to those of $H-v$ with three independent edges. Note
that $G\oplus_3 H$ can be regarded as being obtained from $G$
by inflating the vertex $u$ to $H-v$.

A cubic graph $G$ containing a cycle-separating $2$-edge-cut or
$3$-edge-cut can be expressed as $G_1\oplus_2 G_2$ or
$G_1\oplus_3 G_2$ uniquely, only depending on the chosen
edge-cut. It is easy to see that if two $2$-edge-cuts or
$3$-edge-cuts intersect, the result of decomposition does not
depend on the order in which the cuts are taken. As a
consequence, we have the following.

\begin{theorem}\label{thm:decomposition}
Every $2$-connected cubic graph $G$ admits a decomposition into
a collection $\{G_1,\ldots,G_m\}$ of cyclically
$4$-edge-connected cubic graphs such that $G$ can be
reconstructed from them by repeated application of $2$-sums
and $3$-sums. Moreover, this collection is unique up to
ordering and isomorphism.
\end{theorem}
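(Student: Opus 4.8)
The plan is to establish \textbf{existence} by induction on the order $|V(G)|$ and \textbf{uniqueness} by a confluence (diamond-lemma) argument resting on the two facts recorded just before the statement: that a single cycle-separating $2$- or $3$-cut determines a $\oplus_2$- or $\oplus_3$-decomposition uniquely, and that the result of decomposing along two intersecting small cuts does not depend on their order.

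First I would settle existence. If $G$ is already cyclically $4$-edge-connected we are done with $m=1$. Otherwise $G$ carries a cycle-separating cut $S=\delta(A)$ with $|S|\in\{2,3\}$, and I split $G$ along $S$ into the two cubic graphs $G_1,G_2$ described before the theorem (adjoining a new edge for a $2$-cut, a new $3$-valent vertex for a $3$-cut), so that $G=G_1\oplus_2 G_2$ or $G=G_1\oplus_3 G_2$. The key point is that both pieces are strictly smaller. A degree count in a cubic graph, using $|\delta(X)|=3|X|-2e(X)$, shows that any side of a $2$-cut, and any side of a cycle-separating $3$-cut, contains a circuit and hence at least two vertices (at least three for a $3$-cut, by the parity $|\delta(X)|\equiv|X|\pmod 2$); thus neither $A$ nor $\overline A$ is a single vertex, giving $|V(G_1)|,|V(G_2)|<|V(G)|$. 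Applying the induction hypothesis to $G_1$ and $G_2$ and concatenating the resulting collections yields a decomposition of $G$, and reassembling the parts by the recorded $2$- and $3$-sums reconstructs $G$.

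For uniqueness I would regard one decomposition step---splitting along a cycle-separating cut of size at most $3$---as a rewriting rule on multisets of cubic graphs, whose terminal objects are exactly the cyclically $4$-edge-connected graphs. Termination is immediate from the strict drop in order just established, so by Newman's lemma it suffices to prove \emph{local confluence}: if $G$ admits two distinct admissible cuts $S_1=\delta(A_1)$ and $S_2=\delta(A_2)$, then the two one-step decompositions can be completed, in finitely many further steps, to a common collection. When $S_1$ and $S_2$ do not cross---that is, $A_1,A_2$ are nested or their sides are disjoint---the cut $S_2$ survives as an admissible cut in exactly one of the two pieces produced by splitting along $S_1$ (and symmetrically), so the two processing orders manifestly commute and the recursion closes.

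The main obstacle is the \textbf{crossing case}, where $A_1\cap A_2$, $A_1\setminus A_2$, $A_2\setminus A_1$ and $\overline{A_1\cup A_2}$ are all nonempty. Here I would invoke submodularity and posimodularity of the cut function, $|\delta(A_1\cap A_2)|+|\delta(A_1\cup A_2)|\le|\delta(A_1)|+|\delta(A_2)|$ together with the companion bound for $A_1\setminus A_2$ and $A_2\setminus A_1$, to force all four corner cuts to be small. Since $G$ is cubic and $|\delta(A_1)|,|\delta(A_2)|\le 3$, this pins down the finitely many crossing patterns of a $2$-cut with a $2$-cut, of a $2$-cut with a $3$-cut, and of a $3$-cut with a $3$-cut; in each pattern the crossing admits an \emph{uncrossing} into nested admissible cuts. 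I would then check directly, pattern by pattern, that decomposing along $S_1$ and then along the image of $S_2$ produces the same multiset of cyclically $4$-edge-connected parts as the reverse order---this is exactly the order-independence for intersecting cuts asserted before the theorem, now verified corner by corner, and it is the step where the bookkeeping is genuinely delicate. With local confluence in hand, Newman's lemma upgrades it to global confluence, so the terminal collection is independent of all choices; combining this unique normal form with the existence argument yields the decomposition and its uniqueness up to ordering and isomorphism.
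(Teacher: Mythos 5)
Your proposal is correct in outline and follows essentially the same route as the paper, which in fact offers no detailed proof at all: Theorem~\ref{thm:decomposition} is presented there as an immediate consequence of the two observations you single out (a single cycle-separating $2$- or $3$-cut determines the splitting uniquely, and intersecting cuts can be processed in either order). What you add is the correct formal skeleton — induction on order for existence, termination plus local confluence (Newman's lemma) for uniqueness — and you rightly identify the crossing case as the only place where real work happens. Two cautions. First, the crossing-case analysis is announced rather than carried out; the paper hides the same work behind ``it is easy to see'', so you have not lost ground, but a complete write-up must actually do the uncrossing and verify, corner by corner, that both orders terminate in the same multiset (your submodularity bound $|\delta(A_1\cap A_2)|+|\delta(A_1\cup A_2)|\le|\delta(A_1)|+|\delta(A_2)|$ together with bridgelessness does pin the corner cuts down to sizes $2$ or $3$, so the case list is genuinely finite). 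Second, be careful with cycle-separating $3$-cuts whose edges are not independent: the $3$-sum as defined in the paper joins the pieces by \emph{three independent edges}, so a $3$-cut with two edges meeting at a vertex $d$ is not the seam of a $3$-sum (splitting along it would create a digon at the image of $d$ and, if also treated as an admissible step, would inject a spurious $\theta$-graph into the collection and break uniqueness). Your rewriting system must therefore be restricted to $2$-cuts and independent $3$-cuts; this costs nothing for termination, since a non-independent cycle-separating $3$-cut at $d$ always yields the cycle-separating $2$-cut $\delta(A\cup\{d\})$, so a graph with no admissible cut is indeed cyclically $4$-edge-connected.
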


Theorems~\ref{thm:4-ec} and ~\ref{thm:decomposition}, combined
with results of the previous sections and with
\cite[Theorem~4.1]{KMNS-Berge} (see also
\cite[Theorem~2.1]{KMNS-eurocomb}) can now be used to prove
that cubic graphs of defect~$3$ fulfil Berge's conjecture.

\begin{theorem}
Every $2$-connected cubic graph $G$ with colouring defect $3$
has $\pi(G)=4$ or $\pi(G)=5$. Moreover, if $G$ has an essential
triangle, then $\pi(G)=4$.
\end{theorem}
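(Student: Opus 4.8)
The plan is to combine the reduction machinery of Sections~\ref{sec:reduction}--\ref{sec:main} with the canonical decomposition of Theorem~\ref{thm:decomposition} and the cyclically $4$-edge-connected case settled in Theorem~\ref{thm:4-ec}. The lower bound is immediate: since $\df{G}=3>0$, the graph $G$ is not $3$-edge-colourable, so $\pi(G)\ge 4$ and only $\pi(G)\le 5$ remains (plus the refined claim $\pi(G)=4$ when an essential triangle is present). By Theorem~\ref{thm:reduction} I would first reduce $G$ to a snark $G'=B$ with $\df{B}=3$ that is either nontrivial or of the form $K\oplus_3 K_4$, where $K$ is a nontrivial snark with $\df{K}\ge 4$ and the $K_4$ realises an essential triangle. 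Reading this reduction through the decomposition of Theorem~\ref{thm:decomposition}, every block of $G$ other than the core-carrying block $B$ is split off as a $3$-edge-colourable completion by Lemmas~\ref{lem:2-cuts} and~\ref{lem:3-cuts} and Proposition~\ref{lem:4-cycles}; thus the uncolourability is concentrated in the single block $B$, and it suffices to bound $\pi(B)$ and then propagate Berge covers back up through the $2$-sums and $3$-sums by the combination result \cite[Theorem~4.1]{KMNS-Berge}.

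I would bound $\pi(B)$ according to its two possible shapes. If $B$ is a nontrivial snark of defect $3$, then $\pi(B)\in\{4,5\}$ by Theorem~\ref{thm:4-ec}, the value $5$ occurring only for the Petersen graph. If instead $B=K\oplus_3 K_4$ carries an essential triangle, I would prove $\pi(B)=4$ \emph{directly}, so as to bypass the unknown quantity $\pi(K)$. Take the $3$-array $\mathcal N$ of $B$ with hexagonal core $D^+=(d_0d_1d_2d_3d_4d_5)$ furnished by the proof of Theorem~\ref{thm:essential}, whose three uncovered edges are the two $5$-cycle edges $d_1,d_3$ and the triangle edge $d_5$; these lie in the edge at $v$ after contracting $T$.

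The key step is the construction of a single perfect matching $M_4$ of $B$ covering $\{d_1,d_3,d_5\}$. Because $v$ lies in a heavy $5$-cluster, the flexibility recorded in the proof of Theorem~\ref{thm:essential} (Lemmas~6.1--6.3 of~\cite{NS-decred}) lets me choose a $3$-edge-colouring $\tau$ of $K-E(D)$ in which the edge $g_0$ of $\delta(D)$ incident with $v$ is the unique edge of its colour among $\delta(D)$. The corresponding colour class is then a perfect matching of $K$ minus the four vertices of $D$ other than $v$, and it contains $g_0$; augmenting it with the two independent $5$-cycle edges $d_1$ and $d_3$ yields a perfect matching of $K$ through $g_0$. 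Inflating $v$ turns this into a perfect matching $M_4$ of $B$ that uses $g_0$ together with the opposite triangle edge $d_5$, hence contains all of $\{d_1,d_3,d_5\}$. Consequently $\mathcal N\cup\{M_4\}$ is a $4$-cover and $\pi(B)=4$.

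Finally I would assemble the conclusion. In either case $\pi(B)\le 5$, and since all other blocks are $3$-edge-colourable, \cite[Theorem~4.1]{KMNS-Berge} propagates the cover through the $2$-sums and $3$-sums without raising the index above that of $B$, giving $\pi(G)\in\{4,5\}$. For the last assertion, if $G$ has an essential triangle then by Lemma~\ref{lem:essential} and Theorem~\ref{thm:reduction} the core block is of the second type (in particular $B$ is not the Petersen graph), so $\pi(B)=4$ by the construction above and assembling with the $3$-covers of the colourable blocks keeps $\pi(G)=4$. The main obstacle is precisely this essential-triangle block: a naive combination rule would only bound $\pi(B)$ by $\pi(K)$, which is not known to be at most $5$, so the decisive point is the explicit fourth matching $M_4$ witnessing that vertex inflation collapses the defect, and with it the perfect matching index, to the minimum. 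A secondary technical check is that \cite[Theorem~4.1]{KMNS-Berge} indeed propagates covers through both sum types without exceeding the index of the core block.
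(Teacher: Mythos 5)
The paper does not actually prove this theorem: it explicitly defers the detailed proof to the companion article \cite{KMNS-Bergegen} and records only the ingredients, namely Theorems~\ref{thm:4-ec} and~\ref{thm:decomposition}, the reduction results of Sections~\ref{sec:reduction}--\ref{sec:main}, and \cite[Theorem~4.1]{KMNS-Berge}. Your outline follows exactly this recipe, and your one genuinely new ingredient --- the explicit fourth perfect matching $M_4$ through the three uncovered edges $d_1,d_3,d_5$ of the core $D^+$ in the essential-triangle block --- checks out: choosing the colouring $\tau$ of $K-E(D)$ so that $g_0$ and $g_2$ are the two singleton colours (legitimate, since they are not incident with a common edge of $D$) makes the colour class containing $g_0$ a perfect matching of $K-\{v_1,v_2,v_3,v_4\}$, and adding $d_1,d_3$ and then $d_5$ after inflation gives the desired matching, so $\mathcal{N}\cup\{M_4\}$ covers $B$. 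This is precisely where a naive appeal to Theorem~\ref{thm:4-ec} breaks down (the nontrivial piece $K$ has $\df{K}\ge 4$), and you identify and repair that correctly. Two soft spots remain. First, the propagation step rests entirely on \cite[Theorem~4.1]{KMNS-Berge}, whose statement is not reproduced here; note that Theorems~\ref{thm:correct} and~\ref{thm:bergegen} show that $3$-sums with colourable (quasi-bipartite) graphs can \emph{preserve} $\pi=5$, so ``does not raise the index above that of $B$'' is exactly the nontrivial content you are importing, not a formality. Second, the $4$-cycle reductions of Proposition~\ref{lem:4-cycles} are local surgeries, not $2$- or $3$-sums, so they cannot be ``read through'' Theorem~\ref{thm:decomposition}; the clean route is to omit them altogether --- the pieces of the canonical decomposition may retain quadrilaterals, but Theorem~\ref{thm:4-ec} requires only cyclic $4$-edge-connectedness, so nothing is lost. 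With those two points tightened, your argument is sound and, as far as can be judged from the sketch the paper provides, coincides with the authors' intended proof.
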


In order to characterise the cubic graphs of defect $3$ that
have perfect matching index equal to $5$ we need to introduce a
new concept. A bridgeless cubic graph $Q$ is
\emph{quasi-bipartite} if it contains an independent set of
vertices $U$ such that the graph obtained by the contraction of
each component of $Q-U$ to a vertex is a cubic bipartite graph
where $U$ is one of the partite sets. Roughly speaking, a
quasi-bipartite cubic graph arises from a bipartite cubic graph
by inflating certain vertices in one of the partite sets to
larger subgraphs, while preserving the edges between the
partite sets.

The next theorem describes conditions under which a $3$-sum of
two graphs has perfect matching index at least $5$. A $3$-sum
with one of the summands being quasi-bipartite will be called
\emph{correct} if the resulting graph is again quasi-bipartite.

\begin{theorem}\label{thm:correct}
Let $G$ and $H$ be $2$-connected cubic graphs with
distinguished vertices $u$ and~$v$, respectively, where
$\pi(G)\ge 5$ and $H$ is $3$-edge-colourable. Assume that
$\pi(G-u)=4$. Then $\pi(G\oplus_3 H)\ge 5$ if and only if $H$
is quasi-bipartite and the $3$-sum is correct.
\end{theorem}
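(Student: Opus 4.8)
The plan is to pass from perfect matching indices to the structural notion of quasi-bipartiteness and then to analyse how this property is transmitted across the $3$-sum. The starting point is the characterisation of defect-$3$ snarks with perfect matching index $5$ established in \cite{KMNS-Berge} (see also \cite{KMNS-Bergegen, KMNS-eurocomb}): an uncolourable $2$-connected cubic graph of the kind considered here has $\pi\geq 5$ if and only if it is quasi-bipartite. Since $\pi(G)\geq 5$, both $G$ and the $3$-sum $G\oplus_3 H$ are snarks, as the $3$-sum of a snark with a colourable graph is again a snark (a proper $3$-edge-colouring would colour the three bridge edges with the three distinct colours, by the Parity Lemma, and hence extend across the contracted vertex of $G$). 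By contrast $H$, being $3$-edge-colourable, is not subject to the characterisation and may be quasi-bipartite without violating $\pi(H)=3$. Applying the characterisation to $G$ and to $G\oplus_3 H$, and recalling that \emph{correct} means precisely that $G\oplus_3 H$ is quasi-bipartite, the assertion $\pi(G\oplus_3 H)\geq 5$ becomes equivalent to the $3$-sum being correct. The theorem therefore reduces to the single nontrivial implication that, under the stated hypotheses, a correct $3$-sum forces $H$ to be quasi-bipartite; the reverse implication is then immediate, since a correct sum is by definition quasi-bipartite and hence has $\pi\geq 5$.

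My first step in proving this implication is to locate the distinguished vertex $u$ within the quasi-bipartite structure of $G$. Writing $U$ for a witnessing independent set of $G$, so that contracting the components of $G-U$ yields a cubic bipartite graph with $U$ one partite class, I would use the hypothesis $\pi(G-u)=4$ to show that $u$ must lie in $U$: if $u$ lay inside a nontrivial component of $G-U$, the quasi-bipartite witness would essentially survive the deletion of $u$ and keep the index at $5$, contradicting $\pi(G-u)=4$, whereas deleting the single independent vertex $u\in U$ is exactly what drops the index to $4$. This identifies $G\oplus_3 H$ with the inflation of the partite vertex $u$ to the pole $H-v$, which is the viewpoint recorded after the definition of quasi-bipartiteness.

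The heart of the argument is then to reconstruct a quasi-bipartite witness for $H$ from one for $G\oplus_3 H$. Let $W$ be a witnessing independent set for $G\oplus_3 H$, and let $a_1,a_2,a_3$ and $b_1,b_2,b_3$ be the vertices of $G-u$ and of $H-v$ incident with the three bridge edges $a_ib_i$. Because the contracted quotient is bipartite with $W$ as one class, each bridge $a_ib_i$ meets $W$ in exactly one endpoint; consequently every component of the complement of $W$ lies entirely on the $G$-side or entirely on the $H$-side. Restricting $W$ to $W_H=W\cap V(H-v)$ and inserting $v$ on the side opposite to $W_H$ — adjoining $v$ to the independent class when none of the $b_i$ lies in $W$, and otherwise placing $v$ into the appropriate component — I would check that the induced contraction of $H$ produces a cubic bipartite quotient with the chosen class as one side. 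The degrees match because each $H$-side component retains in $H$ exactly the edges it had in $G\oplus_3 H$, with the bridges running into it replaced by edges to $v$; this yields a quasi-bipartite witness for $H$.

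The main obstacle will be the bookkeeping at the interface: verifying that the reconstructed quotient is a \emph{simple} cubic bipartite graph and that $v$ is placed consistently with all three bridges at once. Exactly here the two remaining hypotheses enter. The condition $\pi(G-u)=4$ fixes the matching pattern across the three bridges, and thereby the alignment of the two partite structures, ruling out an inconsistent placement of $v$; and the $3$-edge-colourability of $H$ excludes the degenerate configurations, such as two of the $b_i$ falling into a common component adjacent to $v$, that would otherwise create a multiple edge in the quotient. Completing the reverse direction is routine: given quasi-bipartite witnesses for $G$ with $u\in U$ and for $H$ with $v$ in its independent class, one glues them along the three bridges to obtain a witness for $G\oplus_3 H$, so the $3$-sum is correct and $\pi(G\oplus_3 H)\geq 5$ follows once more from the characterisation.
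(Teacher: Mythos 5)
The paper does not actually prove Theorem~\ref{thm:correct}; it is stated as a result whose proof is deferred to the companion manuscript \cite{KMNS-Bergegen}, so there is no in-paper argument to compare yours against. Judged on its own, your proposal rests on a premise that is false, namely that for the graphs in question $\pi\ge 5$ is \emph{equivalent} to being quasi-bipartite. The Petersen graph satisfies every hypothesis placed on $G$ (it has $\pi=5$ and $\pi(G-u)=4$, and it is the base case for the repeated application of this theorem in Theorem~\ref{thm:bergegen}), yet it is not quasi-bipartite: a witnessing independent set $U$ would have to satisfy $|U|=$ number of components of $Pg-U$ with each component sending exactly three edges to $U$, and a short count using girth $5$ rules out every candidate $|U|\le 4$. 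In the other direction, $K_{3,3}$ is quasi-bipartite and has $\pi=3$, so quasi-bipartiteness alone does not force $\pi\ge 5$ either. Indeed Theorem~\ref{thm:bergegen} characterises the defect-$3$ graphs with $\pi=5$ as correct inflations of the Petersen graph, not as the quasi-bipartite ones. Once this equivalence is removed, your opening move --- reducing ``$\pi(G\oplus_3H)\ge 5$'' to ``the $3$-sum is correct'' and then writing $U$ for a quasi-bipartite witness of $G$ --- has no support, and both directions of the argument collapse; the real content of the theorem is precisely the implication from a $4$-cover (or its nonexistence) to the structural conclusion, which must be argued by analysing how four perfect matchings of $G\oplus_3 H$ trace across the $3$-edge-cut, using $\pi(G-u)=4$ to transplant covers.

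Two further points would need repair even if the characterisation were available. First, your claim that each bridge $a_ib_i$ meets the witnessing set $W$ of $G\oplus_3 H$ in exactly one endpoint does not follow from bipartiteness of the quotient: a single component of the complement of $W$ may contain vertices on both sides of the cut, in which case a bridge can have both endpoints outside $W$ without creating an edge inside one partite class. Second, the reverse direction as you state it glues quasi-bipartite witnesses of $G$ and $H$, but $G$ need not have one (again, Petersen); the definition of a \emph{correct} $3$-sum in the paper only requires the quasi-bipartite summand to be $H$ and the \emph{result} to be quasi-bipartite, so the reverse implication should proceed from ``$G\oplus_3H$ is quasi-bipartite'' to ``$\pi(G\oplus_3H)\ge5$'' via a lemma that quasi-bipartite \emph{snarks} have no Berge cover by four perfect matchings --- a nontrivial fact that is nowhere established in this paper and would itself need proof or citation.
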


By Lemma~\ref{lem:2-cuts}, no hexagonal core can intersect a
$2$-edge-cut. Applying Theorem~\ref{thm:decomposition} we can
now conclude that every $2$-connected cubic graph with defect
$3$ arises from a $3$-connected cubic graph $H$ with defect $3$
by performing repeated $2$-sums of $H$ with $3$-edge-colourable
graphs in such a way that a core of $H$ is not affected by the
$2$-sums. It follows that we can restrict ourselves to
$3$-connected graphs.

The final result, for $3$-connected graphs, reads as follows.
Its proof involves the use of Theorem~\ref{thm:4-ec}, the
decomposition into cyclically $4$-edge-connected graphs
presented in Theorem~\ref{thm:decomposition}, and a repeated
application of Theorem~\ref{thm:correct}.

\begin{theorem}\label{thm:bergegen}
Every $2$-connected cubic graph $G$ of defect $3$ has perfect
matching index at most $5$. If $G$ is $3$-connected, then
$\pi(G)=5$ if and only if $G$ arises from the Petersen graph by
inflating any number of vertices of a fixed vertex-star by
quasi-bipartite cubic graphs in a correct~way.
\end{theorem}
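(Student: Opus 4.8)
The plan is to treat the two assertions separately, dealing quickly with the bound and devoting the work to the characterisation of equality. The inequality $\pi(G)\le 5$ for every $2$-connected cubic graph $G$ of defect~$3$ is already the content of the theorem preceding this one, so I would simply invoke it. For the characterisation I would first pass to the $3$-connected setting. By the discussion following Lemma~\ref{lem:2-cuts}, every $2$-connected graph of defect~$3$ is obtained from a $3$-connected one by repeated $2$-sums with $3$-edge-colourable graphs that leave a hexagonal core untouched. Since a $2$-sum with a $3$-edge-colourable (hence index~$3$) graph changes neither the defect nor the perfect matching index of the uncolourable summand, I would record this invariance and thereby reduce the whole characterisation to $3$-connected $G$.

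For $3$-connected $G$ of defect~$3$ I would apply Theorem~\ref{thm:decomposition} to write $G$ as an iterated $3$-sum (no $2$-sums occur, as $G$ has no cycle-separating $2$-cut) of cyclically $4$-edge-connected blocks $G_1,\dots,G_m$. Because a $3$-sum $A\oplus_3 B$ is uncolourable exactly when at least one of $A,B$ is, and because $3$-sums with colourable blocks preserve a hexagonal core just as in Lemma~\ref{lem:3-cuts}, I would show that precisely one block $B$ is a snark and that $B$ again has defect~$3$; the remaining blocks are $3$-edge-colourable, and their attachment amounts to inflating vertices of $B$. By Theorem~\ref{thm:4-ec}, this distinguished block satisfies $\pi(B)=4$ unless $B$ is the Petersen graph, in which case $\pi(B)=5$.

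It then remains to track the index through the inflations. Writing $G=B\oplus_3 H_1\oplus_3\cdots$ with each $H_i$ colourable, I would apply Theorem~\ref{thm:correct} inductively: starting from a graph of index $\ge 5$ satisfying the hypothesis $\pi(\,\cdot-u)=4$, the index stays $\ge 5$ after a $3$-sum at $u$ if and only if $H_i$ is quasi-bipartite and the $3$-sum is correct. For the forward direction this forces $B$ to be the Petersen graph — otherwise $\pi(B)=4$ and, by \cite[Theorem~4.1]{KMNS-Berge}, inflation by colourable graphs cannot raise the index to~$5$ — and forces every inflating graph to be quasi-bipartite with a correct $3$-sum; the backward direction follows from the same theorem applied repeatedly. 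The final point is to pin down the admissible inflation sites. The four vertices of the Petersen graph lying off a fixed hexagonal core induce a claw $K_{1,3}$ — the ``vertex-star'' of the statement — and inflating any subset of these four vertices leaves that hexagon an induced core, so the defect remains~$3$; conversely, if the inflated vertices are not all contained in the off-core claw of a single hexagonal core, then no core survives every inflation and the defect exceeds~$3$, removing $G$ from the class under consideration.

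The main obstacle will be exactly this last dichotomy: proving that a hexagonal core of $G$ survives all the inflations if and only if the inflated Petersen vertices lie off one common core, and translating ``off a common core'' into membership in a single claw. The delicate feature is that every individual vertex of the Petersen graph lies off several hexagonal cores, so inflating one vertex imposes no restriction; the constraint emerges only when several vertices are inflated and one insists on a common surviving core. Two further points need care: checking that the hypothesis $\pi(\,\cdot-u)=4$ of Theorem~\ref{thm:correct} holds at every stage of the induction — it holds for $P-u$ by the vertex-transitivity of the Petersen graph and must be shown to persist under correct inflations at the other star vertices — and confirming that the decomposition of Theorem~\ref{thm:decomposition} yields a unique snark block, which then necessarily has defect~$3$.
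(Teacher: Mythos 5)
Your proposal follows exactly the route the paper indicates for Theorem~\ref{thm:bergegen}: reduce to the $3$-connected case using Lemma~\ref{lem:2-cuts}, decompose into cyclically $4$-edge-connected blocks via Theorem~\ref{thm:decomposition}, apply Theorem~\ref{thm:4-ec} to the unique uncolourable block (forcing the Petersen graph when $\pi=5$), and track the index through the inflations by repeated application of Theorem~\ref{thm:correct}, with the vertex-star correctly identified as the complement of a hexagonal core in the Petersen graph. Note that the paper itself only sketches this argument and defers the detailed proof --- including the delicate points you flag --- to the companion article \cite{KMNS-Bergegen}, so your outline essentially coincides with the intended proof.
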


\section{Concluding remarks}\label{sec:comp}
\noindent{}Here we analyse the defect and several related
invariants of small snarks. Our analysis is computer-aided. We
have computed the defect of all cyclically $4$-edge-connected
snarks of girth at least $5$ and of order at most $36$ from the
database \emph{House of Graphs: Snarks}~\cite{HoG}. We
summarise the output in Table~\ref{tab:defects}. As expected,
the major part of the analysed snarks (in fact, around $99.999089\%$)
have defect $3$. The defect of all nontrivial snarks with at
most $36$ vertices takes values in the set $\{3,4,5,6\}$.

We shall briefly discuss the smallest nontrivial snarks of
defect $4$, $5$, and $6$ in more detail.
\begin{figure}[h!]
\subfigure[$\df{G_{28}}=5$]{
\includegraphics[width=0.28\textwidth]{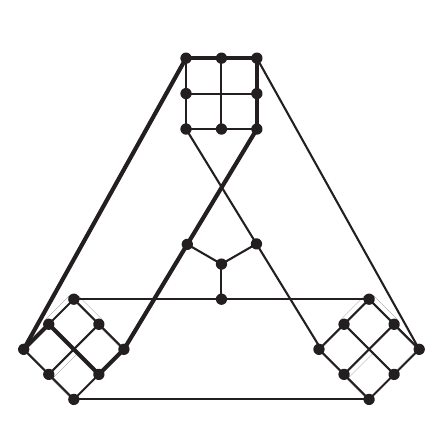}\label{fig:G28}}\quad
\subfigure[$\df{G_{32}}=4$]{ \includegraphics[width=0.28\textwidth]{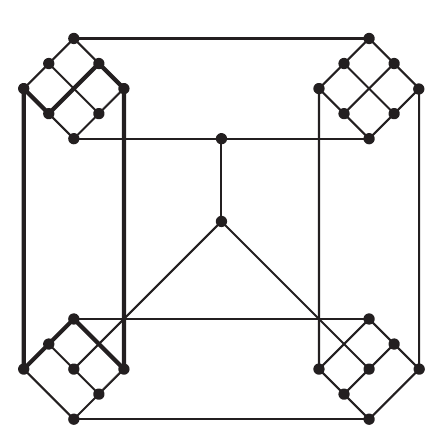}\label{fig:G32}}\quad
\subfigure[$\df{G_{34}}=6$]{ \includegraphics[width=0.28\textwidth]{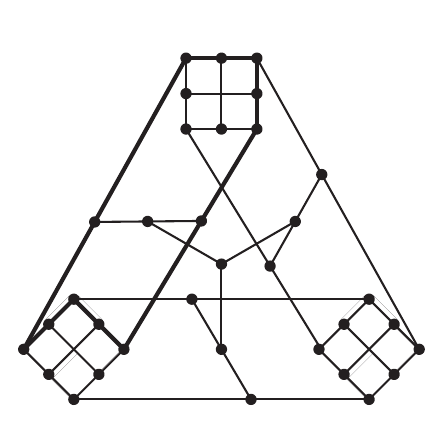}\label{fig:G34} }
\caption{Smallest nontrivial snarks with defect greater than $3$}
\label{Gbigdf}
\end{figure}
The smallest nontrivial snark of defect greater than $3$ has
order $28$. The graph is denoted by $G_{28}$ and is displayed
in Fig.~\ref{fig:G28}; it has defect $5$. It is not difficult
to understand the reason. If the defect of $G_{28}$ was $3$ or
$4$, then the core of an optimal $3$-array would be a circuit
of length $6$ or $8$, respectively. The graph $G_{28}$ contains
nine $6$-cycles and three $8$-cycles. However, it can be easily
seen that each of the $6$-cycles and $8$-cycles is removable.
The graph $G_{28}$ appears also in another context in \cite{MR}
as the smallest nontrivial snark different from the Petersen
graph with circular flow number equal to~$5$. The smallest
nontrivial  snark with defect $4$, denoted by $G_{32}$, has
order $32$, see Fig~\ref{fig:G32}. The smallest order where
cyclically $4$-edge-connected snarks with defect $6$ occur is
$34$; there are exactly seven such snarks.  The most
symmetrical of them, denoted by $G_{34}$, is depicted in
Fig.~\ref{fig:G34}. By coincidence, $G_{34}$ is the unique
smallest nontrivial snark different from the Petersen graph
whose edges cannot be covered with four perfect matchings,
see~\cite{GGHM}. In each of the graphs displayed in
Figure~\ref{Gbigdf} bold edges highlight the core of an optimal
$3$-array.

\begin{table}[h!]
\begin{tabular}{|r|rr|rrrr|}
\hline
\hline
\rule{0pt}{11pt}Order & Nontrivial & Critical & $\operatorname{df}=3$ & $\operatorname{df}=4$ & $\operatorname{df}=5$ &$\operatorname{df}=6$ \\

\hline
\rule{0pt}{11pt}10 & 1 & 1 & 1 & - & - & - \\
18 & 2 & 2 & 2 & - & - & - \\
20 & 6 & 1 & 6 & - & - & - \\
22 & 20 & 2 &  20 & - & - & - \\
24 & 38 & - & 38 & - & - & - \\
26 & 280 & 111 & 280  & - & - & -\\
28 & 2900 & 33 & 2899  & - & 1 & - \\
30 & 28399 & 115 & 28397  & - & 2 & -\\
32 & 293059 & 29 & 293049  & 1 & 9 & - \\
34 & 3833587 & 40330 &  3833538 & 24 & 18 & 7 \\
36 & 60167732 & 14548 &  60167208 & 195 & 304 & 25 \\
\hline
$\sum$\rule{0pt}{11pt} & 64326024 & 55172 & 64325438 & 220 & 334 & 32 \\
\hline
\hline
\end{tabular}
\bigskip
\caption{Defects of small nontrivial snarks}
\label{tab:defects}
\end{table}

It transpires that among the nontrivial snarks of order up to
$36$ there are exactly three graphs of defect greater than $3$
with a heavy cluster of $5$-cycles. The smallest one is
depicted in Figure~\ref{fig:snark34_43}; it has $34$ vertices
and defect $4$. The remaining two have order $36$ and defect
$4$ and $5$, respectively. Recall that, by
Theorem~\ref{thm:essential}, the inflation of any vertex in the
heavy cluster decreases the defect to $3$.

\medskip

We conclude this section with three remarks.

\begin{table}[h!]
\begin{tabular}{|r|rr|rrrr|}
\hline
\hline
\multirow{3}{*}{\scriptsize{Order}} & \multirow{3}{*}{\scriptsize{Nontrivial}} & \multirow{3}{*}{\scriptsize{Critical}} & \scriptsize{Double-core} & \scriptsize{Double-core} & \scriptsize{Double-core} & \scriptsize{Double-core} \\
 &  &  &  & \scriptsize{Removable} & \scriptsize{Single-core} & \scriptsize{Single-core} \\
  &  &  &  &  &  & \scriptsize{Removable} \\
\hline
10 & 1 & 1 & 1 & - & - & - \\
18 & 2 & 2 & 2 & - & - & - \\
20 & 6 & 1 & 1 & - & 5 & - \\
22 & 20 & 2 & 3 & - & 17 & - \\
24 & 38 & - & 1 & 6 & 22 & 9 \\
26 & 280 & 111 & 112 & 63 & 21 & 84 \\
28 & 2900 & 33 & 126 & 706 & 1374 & 693 \\
30 & 28399 & 115 & 907 & 9126 & 10798 & 7566 \\
32 & 293059 & 29 & 3693 & 133046 & 53799 & 102511 \\
34 & 3833587 & 40330 & 55144 & 2095876 & 192684 & 1489834 \\
\hline
$\sum$\rule[2pt]{0pt}{9pt} & 4158292 & 40624 &  59990 & 2238823 & 258720 & 1600697\\
\hline
\hline
\end{tabular}
\bigskip

\caption{Hexagons in nontrivial snarks with defect $3$}
\label{tab:6cycles}
\end{table}

\begin{remark}
We have investigated properties of $6$-cycles of all nontrivial
snarks of order not exceeding~$34$. A $6$-cycle $C$ in a snark
can be either removable or non-removable. If $C$ is
non-removable, then one of the following three possibilities
occurs: $C$ is a double-core hexagon, $C$ is a single-core
hexagon, or $C$ does not constitute a hexagonal core. If the
latter occurs, $C$ is a \emph{non-core hexagon}. Outputs of
computations are summarised in Table~\ref{tab:6cycles}. The
four columns in the right-hand side of the table represent a
partition of the set of all nontrivial snarks of defect $3$
with at most $34$ vertices according to the properties of their
$6$-cycles. The column with heading ``Double-core'' contains
the numbers of nontrivial snarks in which every $6$-cycle is a
double-core hexagon for some $3$-array. The column headed by
``Double-core'' and ``Removable''  enumerates nontrivial snarks
in which every $6$-cycle is either a double-core hexagon or it
is removable, and both types of $6$-cycles occur. The families
of snarks enumerated in the remaining two columns are defined
in a similar manner.

Every snark in the collection of tested snarks with defect $3$
has been found to have at least one double-core hexagon, which
is a remarkable phenomenon. This property, however, 
cannot be expected from trivial snarks. The graph $G$ from 
Example~\ref{ex:non-essential}, shown in Figure~\ref{fig:trim_a}, 
contains a triangle that is intersected by all core hexagons. 
According to Lemma~\ref{lem:essential}(ii), each hexagonal core of $G$ 
is single-core. It is therefore natural to ask the following 
question.

\begin{problem}
Does there exist a nontrivial snark with defect $3$ in
which every core hexagon is single-core?
\end{problem}

This problem is particularly interesting from the point of view
of Fulkerson's conjecture. If such a snark did exist, then
either its Fulkerson cover would not consist of two
complementary optimal $3$-arrays, or else the snark would
provide a counterexample to Fulkerson's conjecture.
\end{remark}

\begin{remark}
There exist many snarks where every $6$-cycle is a double-core
hexagon, see Table~\ref{tab:6cycles}, and critical snarks of
order at most $36$ are among them. This observation motivates
the following conjecture.

\begin{conjecture}\label{con:crit}
Every critical snark has defect $3$.
\end{conjecture}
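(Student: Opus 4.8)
The plan is to reduce the statement to the existence of a hexagonal core and then to exploit the unusually strong colourability consequences of criticality. By Theorem~\ref{thm:main}, a snark $G$ has $\df{G}=3$ if and only if it contains an induced $6$-cycle $C$ for which $G-E(C)$ admits a $3$-edge-colouring inducing the boundary pattern $1,1,2,2,3,3$ or $1,2,2,3,3,1$ on $\delta(C)$; so it suffices to produce such a core in every critical snark. The first observation I would record is that criticality translates into a global positivity statement for the K\'aszonyi function: if every pair of adjacent vertices $\{u,v\}$ is non-removable, then $G-\{u,v\}$ is $3$-edge-colourable for every edge $e=uv$, and since $G\sim e$ is colourable precisely when $G-\{u,v\}$ is, we obtain $\psi_G(e)>0$ for \emph{every} edge $e$ of $G$. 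In particular, every $5$-cluster of a critical snark is heavy, which places us squarely in the regime where the machinery of Theorem~\ref{thm:essential} applies.

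With this in hand, the strategy is to locate a short even circuit that can serve as a core. The constructions in the proofs of Theorems~\ref{thm:main} and~\ref{thm:essential} display exactly the local picture one is aiming for: given a suitable induced $6$-cycle $C=(q_0\dots q_5)$ one needs a $3$-edge-colouring of $G-E(C)$ under which the doubly covered and uncovered edges of $C$ interleave as in Figure~\ref{fig:core3}. The heavy-cluster argument is encouraging here, because the $(\Leftarrow)$ part of Theorem~\ref{thm:essential} already manufactures precisely such a colouring out of an induced $5$-cycle sitting in a heavy cluster, using Lemmas~6.1--6.3 of \cite{NS-decred} to realise the required pair of non-adjacent boundary colours. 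I would try to adapt that argument to colour $G-E(C)$ for an induced $6$-cycle $C$ directly, rather than passing through an inflation: start from an edge $e=uv$, take an isochromatic colouring $\phi$ of $G-\{u,v\}$ guaranteed by criticality, and re-route $\phi$ around a $6$-cycle through $e$ so as to uncover three independent edges and double-cover the complementary three.

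The hard part, and the reason the statement is only conjectural, is purely global: nothing in the criticality hypothesis directly guarantees either the \emph{existence} of a suitable induced $6$-cycle or that the local colourings furnished edge-by-edge can be synchronised into a single colouring of $G-E(C)$ with the prescribed boundary pattern. The sharpest form of this difficulty is the girth barrier. Steffen's bound $\df{G}\ge \girth(G)/2$ forces $\girth(G)\le 6$ for any defect-$3$ snark, so proving the conjecture entails ruling out critical snarks of girth $\ge 7$ (which would automatically have defect $\ge 4$); and by \cite[Corollary~2.5]{S2} a defect-$3$ snark must contain a $6$-cycle, so one must also show that every critical snark contains an induced $6$-cycle at all. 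I expect establishing that critical snarks have girth at most $6$ — equivalently, that the global positivity $\psi_G>0$ is incompatible with arbitrarily long shortest circuits — to be the main obstacle; once a $6$-cycle is available, converting it into a core by the colouring-extension argument above, while still delicate, seems the more tractable half.
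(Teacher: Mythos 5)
This statement is Conjecture~\ref{con:crit}; the paper offers no proof of it, only computational evidence (every nontrivial snark of order at most $36$ that is critical has defect $3$) and a chain of implications placing it above Conjectures~\ref{con:irre} and~\ref{con:ns}. So there is no argument in the paper to compare yours against, and your proposal, as you yourself acknowledge, is not a proof either: it is a reduction of the conjecture to two open subproblems. The one substantive step you do carry out is correct and worth recording: since criticality means $G-\{u,v\}$ is colourable for every edge $uv$, and $G\sim e$ is colourable exactly when $G-\{u,v\}$ is, every edge of a critical snark satisfies $\psi_G(e)>0$, hence every $5$-cluster of a critical snark is heavy. That observation is sound and is the natural bridge to the machinery of Section~\ref{sec:main}.

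The gap is everything after that. First, Theorem~\ref{thm:essential} cannot be applied directly: it is a statement about the graph obtained by \emph{inflating} a vertex of a snark $K$ with $\df{K}\ge 4$, and it produces a hexagonal core passing through the new triangle, not a core inside $K$ itself; adapting its colouring construction to an induced $6$-cycle of $G$ with no triangle available is precisely the ``synchronisation'' step you leave open, and nothing in criticality obviously supplies the single colouring of $G-E(C)$ with the boundary pattern required by Theorem~\ref{thm:main}(iii). Second, the girth barrier you identify is not merely an obstacle but is itself an open conjecture: since $\df{G}\ge\girth(G)/2$, your route must show that no critical snark has girth exceeding $6$, which (restricted to the subfamily of irreducible snarks) is exactly Conjecture~\ref{con:ns} of Nedela and \v Skoviera, open since 1996 and noted in the paper as a \emph{consequence} of the statement you are trying to prove. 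So your plan correctly maps the terrain but does not advance beyond what the paper already records; to claim progress you would need either a proof that heavy $5$-clusters force short circuits, or an unconditional construction of a core hexagon from the family of colourings $\{\phi_e\}_{e\in E(G)}$ furnished by criticality.
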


Recall that the existence of a double-core hexagon in a snark
implies that the union of the corresponding two $3$-arrays
constitutes a Fulkerson cover. Therefore, we propose the
following.

\begin{conjecture}\label{con:critful}
In a critical snark, every hexagon is double-core. In
particular, every optimal $3$-array of perfect matchings
extends to a Fulkerson cover.
\end{conjecture}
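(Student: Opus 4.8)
The plan is to prove the two assertions in turn, deducing the Fulkerson statement from the structural one, and throughout I would exploit a clean reformulation of criticality. Since $G\sim e$ is $3$-edge-colourable exactly when $G-\{u,v\}$ is for $e=uv$ (by \cite[Proposition~4.2]{NS-decred}), a snark is critical if and only if the K\'aszonyi function satisfies $\psi_G(e)>0$ for \emph{every} edge $e$; equivalently, \emph{every} $5$-cluster of $G$ is heavy. This ties the problem directly to the machinery already used in Theorem~\ref{thm:essential}. The ``in particular'' clause then reduces to the structural one: granting $\df{G}=3$ (which the argument must also establish, cf.\ Conjecture~\ref{con:crit}), any optimal $3$-array has a hexagonal core $C$ by Theorem~\ref{thm:main}; if $C$ is double-core it carries a complementary optimal $3$-array, and the two arrays together cover every edge exactly twice, i.e.\ form a Fulkerson cover containing the given array.

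First I would dispose of the easy half, namely that in a critical snark every $6$-cycle $C$ is \emph{non-removable}. Pick any edge $uv$ of $C$. By criticality $G-\{u,v\}$ is $3$-edge-colourable, and since $P:=G-V(C)$ is an induced sub-multipole of $G-\{u,v\}$, restricting such a colouring yields a proper $3$-edge-colouring of the $6$-pole $P$ (every edge $r_i$ of $\delta(C)$ still receives a colour, those incident with $u$ or $v$ simply becoming dangling). Hence $P$ is colourable and $C$ is non-removable. This restriction also equips $P$ with a boundary pattern on $\delta(C)$, which is the raw material for the harder steps. By Theorem~\ref{thm:main}(iii) a \emph{core} pattern is an assignment to $\delta(C)$ reading, cyclically, $(3,3,2,2,1,1)$ or $(3,2,2,1,1,3)$ up to permuting $\{1,2,3\}$; after normalising so that the two patterns agree on the even edges $r_0,r_2,r_4$, they differ precisely by the colour $3$-cycle $1\mapsto3\mapsto2\mapsto1$ applied to the three odd leaving edges $r_1,r_3,r_5$.

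The substantial work is to promote ``non-removable'' to ``double-core''. This splits into (a) realising \emph{some} core pattern on $P$, which makes $C$ a hexagonal core and forces $\df{G}=3$, and (b) realising \emph{both} core patterns. For step (a) I would feed the boundary pattern obtained above into the colour-extension lemmas of K\'aszonyi and Bradley (Lemmas~6.1--6.3 of \cite{NS-decred}), exactly as in the $(\Leftarrow)$ direction of Theorem~\ref{thm:essential}: heaviness of the $5$-clusters meeting $C$ supplies enough freedom to steer the colours on $\delta(C)$ into one of the two admissible core patterns; establishing $\df{G}=3$ for critical snarks (Conjecture~\ref{con:crit}) should fall out of this step and be proved in tandem. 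For step (b) the aim is to transport a colouring of $P$ realising one pattern to one realising the other; in view of the previous paragraph, the required change on $\delta(C)$ is a fixed $3$-cycle on $r_1,r_3,r_5$ fixing $r_0,r_2,r_4$, so the natural vehicle is a controlled sequence of Kempe-chain recolourings inside $P$ whose chain endpoints are pinned down by invoking $\psi_G>0$ on the edges leaving $C$. More ambitiously, I would try to prove a $6$-cycle analogue of Lemmas~6.1--6.3: for an induced $6$-cycle all of whose edges lie in heavy $5$-clusters (automatic here), the colourings of $P$ realise a family of boundary patterns rich enough to contain both core patterns.

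The hard part will be step (b). There is no general symmetry forcing the number of $3$-edge-colourings of $P$ with pattern $(3,3,2,2,1,1)$ to equal the number realising $(3,2,2,1,1,3)$, so the whole weight of the proof rests on converting the \emph{local} information supplied by criticality---colourability of $G-\{u,v\}$ for one edge at a time---into the \emph{global}, six-fold boundary constraint that defines the complementary pattern. The precise obstruction is that heaviness controls the behaviour of a single edge (or a single $5$-cycle) at a time, whereas the complementary pattern demands flipping the three uncovered edges \emph{simultaneously}; the crux is to exclude a ``rigid'' $6$-pole $P$ in which every colouring returns the original pattern, and it is exactly here that criticality must enter in an essential, non-local way. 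Should criticality prove too weak, my fallback is to settle the statement first under the stronger hypothesis of bicriticality (equivalently irreducibility, by \cite[Theorem~4.4]{NS-decred}), where $G-\{u,v\}$ is colourable for \emph{all} pairs $u,v$; this furnishes colourings of $P$ with independently prescribable behaviour on disjoint pairs of $\delta(C)$ and should yield both patterns directly. Since the critical snarks that are not bicritical appear to be rare, and all tested critical snarks satisfy the conjecture, isolating and treating those cases separately is a plausible route to the full critical statement.
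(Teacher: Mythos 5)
The statement you were asked to prove is not a theorem of the paper at all: it is Conjecture~\ref{con:critful}, which the authors explicitly leave open and support only by computer evidence on small snarks (Table~\ref{tab:6cycles} and the surrounding remarks in Section~\ref{sec:comp}), noting the implication chain Conjecture~\ref{con:critful} $\Rightarrow$ Conjecture~\ref{con:crit} $\Rightarrow$ Conjecture~\ref{con:irre} $\Rightarrow$ Conjecture~\ref{con:ns}. So there is no proof in the paper to compare against, and your text is, by its own admission, a research plan rather than a proof. The parts that are sound are the easy ones: criticality is indeed equivalent to $\psi_G(e)>0$ for every edge, and restricting a colouring of $G-\{u,v\}$ (for an edge $uv$ of a $6$-cycle $C$) to $G-V(C)$ shows every hexagon in a critical snark is non-removable; also, the reduction of the ``in particular'' clause to the structural claim is correct, since the paper observes that a double-core hexagon yields a Fulkerson cover. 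One slip: your claim that $\psi_G>0$ everywhere is ``equivalently, every $5$-cluster is heavy'' is wrong in general, because edges lying on no $5$-cycle belong to no $5$-cluster, so heaviness of all $5$-clusters is a weaker condition than criticality.

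The genuine gaps are exactly where you flag them, and they are not details. Your step (a) --- promoting ``non-removable'' to ``core hexagon'' --- is precisely the paper's second open problem in Section~\ref{sec:comp}: it is unknown whether a non-removable hexagon can fail to realise either core pattern, and the K\'aszonyi--Bradley machinery you invoke (Lemmas~6.1--6.3 of \cite{NS-decred}, used in Theorem~\ref{thm:essential}) controls boundary colourings of \emph{pentagons}, not hexagons; no $6$-cycle analogue exists, and producing one is the heart of the matter, not a routine adaptation. Step (a) also silently requires $\df{G}=3$ for every critical snark, i.e.\ Conjecture~\ref{con:crit} itself, which you say ``should fall out'' but for which you give no argument. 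Step (b) --- realising \emph{both} core patterns --- you yourself identify as the crux, and you offer only the hope of Kempe-chain recolourings with no mechanism to exclude a ``rigid'' $6$-pole in which every colouring returns the same boundary pattern; the fallback via bicriticality is likewise not carried out (colourability of $G-\{u,v\}$ for non-adjacent pairs constrains two boundary edges at a time, but nothing you write forces the simultaneous three-edge flip on $r_1,r_3,r_5$ that distinguishes the two patterns). In short, your plan correctly identifies the obstacles but resolves none of them, so it does not constitute a proof; the statement remains, as in the paper, a conjecture.
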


In \cite{NS-decred} it is proved that irreducible snarks
coincide with bicritical ones. It means that the removal of any
pair of distinct vertices yields a $3$-edge-colourable graph.
Irreducible snarks thus constitute a subfamily of critical
snarks. Hence, we have the following weaker conjecture.

\begin{conjecture}\label{con:irre}
Every irreducible snark $G$ has defect $3$.
\end{conjecture}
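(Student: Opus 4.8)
The plan is to reduce the statement to the hexagonal-core criterion of Theorem~\ref{thm:main} and then to exploit the extreme colour-flexibility that irreducibility confers. First I would observe that irreducibility makes the problem cleaner than it looks: since every circuit of length at most $4$ is removable (Lemma~\ref{lem:odstr4}) and small cuts can be reduced, an irreducible snark is automatically nontrivial, so $G$ has girth at least $5$, is cyclically $4$-edge-connected, and none of the reduction machinery of Section~\ref{sec:reduction} is required --- we may argue on $G$ directly. By Theorem~\ref{thm:main}, proving $\df{G}=3$ is then exactly the task of exhibiting an induced $6$-cycle $C$ for which $G-E(C)$ admits a $3$-edge-colouring whose six boundary edges follow the pattern $1,1,2,2,3,3$ or $1,2,2,3,3,1$; equivalently, three perfect matchings leaving precisely three edges uncovered.

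The ingredient supplied by irreducibility is that $G-\{u,v\}$ is $3$-edge-colourable for \emph{every} pair of distinct vertices, so $G\sim e$ is colourable and $\psi_G(e)>0$ for every edge $e$; in the language of Theorem~\ref{thm:essential} every edge behaves as though it lay in a heavy cluster. The natural attempt is to fix a short induced cycle, delete a suitable vertex or adjacent pair lying on it, and use the resulting (isochromatic) colouring to certify that some induced $6$-cycle $C$ is a hexagonal core --- that is, to arrange the three uncovered edges and the boundary pattern demanded by Theorem~\ref{thm:main}. The bookkeeping would be carried out through the Parity Lemma and the $F_4$-analysis of Section~\ref{sec:arrays}, exactly as in the extension arguments of Lemmas~\ref{lem:2-cuts}--\ref{lem:core_quadrangle}. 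The subtlety is that an arbitrary colouring need not realise the required pattern around $C$, so the abundance of colourings measured by $\psi_G$ must be used to single out a suitable one.

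The hard part --- and, I expect, the decisive obstacle --- is the very existence of the induced $6$-cycle $C$. By Steffen's inequality $\df{G}\ge\girth(G)/2$ and by $\df{G}\ge 3\omega(G)/2$ of \cite{JS}, any proof of $\df{G}=3$ must in particular establish that every irreducible snark has girth at most $6$ and oddness exactly $2$. Neither global bound is currently known for the whole class of irreducible snarks, and both are of the same difficulty as the Jaeger--Swart conjecture on cyclic connectivity discussed in the Introduction. I would therefore attempt the structural dichotomy head-on: show that an irreducible snark of girth at least $7$ (equivalently, of oddness at least $4$) cannot exist, presumably by analysing optimal cores under the decomposition of Theorem~\ref{thm:decomposition}. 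It is precisely this girth/oddness bound that separates a routine local verification from a full proof, which is the reason the statement is offered as a conjecture --- and why even its stronger companion, Conjecture~\ref{con:crit}, remains open.
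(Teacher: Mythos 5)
The statement you were asked to prove is \emph{Conjecture}~\ref{con:irre}; the paper offers no proof of it, only computational evidence for nontrivial snarks of order up to $36$ and the observation that it sits in the implication chain Conjecture~\ref{con:critful} $\Rightarrow$ Conjecture~\ref{con:crit} $\Rightarrow$ Conjecture~\ref{con:irre} $\Rightarrow$ Conjecture~\ref{con:ns}. Your write-up is therefore correctly calibrated: it is an honest reduction of the problem rather than a proof, and it does not close the gap — nor could it with the tools in this paper. Your preliminary observations are accurate (irreducible $=$ bicritical snarks are nontrivial; bicriticality gives $\psi_G(e)>0$ for every edge; Theorem~\ref{thm:main}(iii) reduces the claim to exhibiting an induced $6$-cycle with the prescribed boundary pattern), and you correctly identify the decisive obstruction: by $\df{G}\ge\girth(G)/2$, any proof must first show that every irreducible snark has girth at most $6$, which is exactly the still-open Conjecture~\ref{con:ns} of Nedela and \v Skoviera. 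So the ``gap'' in your proposal is not an oversight on your part but the actual open content of the conjecture.

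Two small corrections to your framing. First, even granting girth at most $6$, the existence of a $6$-cycle $C$ with $G-V(C)$ colourable does not suffice: the paper's own Problem on non-removable non-core hexagons shows that the boundary pattern $1,1,2,2,3,3$ or $1,2,2,3,3,1$ is an extra condition that is not known to follow from non-removability, so your ``abundance of colourings'' step would need a genuinely new idea, not just bookkeeping with the Parity Lemma and $F_4$. Second, your suggestion to attack the girth/oddness bound via Theorem~\ref{thm:decomposition} cannot work as stated: irreducible snarks are cyclically $4$-edge-connected, so that decomposition is trivial for them and provides no leverage.
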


If Conjecture~\ref{con:irre} is confirmed, then the following
long-standing conjecture proposed in \cite{NS-decred} is
verified as well.

\begin{conjecture}\label{con:ns}
There exist no irreducible snarks of girth greater than $6$.
\end{conjecture}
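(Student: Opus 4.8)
The plan is to obtain Conjecture~\ref{con:ns} as a direct consequence of Conjecture~\ref{con:irre}, using nothing more than the lower bound on defect in terms of girth recalled in the introduction: for every snark $G$ one has $\df{G}\ge \girth(G)/2$ (Steffen~\cite{S2}). The entire implication rests on this single inequality together with the fact that both invariants take integer values.

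First I would assume Conjecture~\ref{con:irre}, so that every irreducible snark $G$ satisfies $\df{G}=3$. I would then argue by contradiction: suppose there existed an irreducible snark $G$ with $\girth(G)>6$. Since the girth is an integer, this forces $\girth(G)\ge 7$, and Steffen's bound yields $\df{G}\ge 7/2$. Because the defect is an integer as well, this gives $\df{G}\ge 4$, contradicting the equality $\df{G}=3$ guaranteed by Conjecture~\ref{con:irre}. Hence no irreducible snark can have girth exceeding $6$, which is exactly the statement of Conjecture~\ref{con:ns}.

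The main point to flag is that there is no genuine obstacle here: all the difficulty has been relocated into Conjecture~\ref{con:irre}, and irreducibility itself is used only to activate that hypothesis, never through any structural analysis of irreducible snarks. The one subtlety worth making explicit is the integrality gap between $3$ and $4$: it is precisely because $\girth(G)>6$ already pushes the defect strictly above $3$ that the contradiction is clean, so no borderline case with $\girth(G)=6$ needs to be treated separately.
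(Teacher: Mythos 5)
Be aware that the statement you were asked about is presented in the paper as an \emph{open conjecture}: the paper offers no proof of it, only the remark that it would follow from Conjecture~\ref{con:irre}, recorded in the displayed implication chain at the end of Section~\ref{sec:comp}. What you have written is therefore not a proof of the statement but a proof of the implication Conjecture~\ref{con:irre} $\Rightarrow$ Conjecture~\ref{con:ns}, and on that level your argument is correct and is exactly the route the paper intends: Steffen's bound $\df{G}\ge\girth(G)/2$ together with integrality forces $\df{G}\ge 4$ whenever $\girth(G)\ge 7$, contradicting $\df{G}=3$. You are right that no borderline case at girth $6$ arises and that irreducibility enters only through the hypothesis of Conjecture~\ref{con:irre}. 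The only thing to flag is that your write-up should not be mistaken for settling Conjecture~\ref{con:ns} unconditionally; the whole burden remains on Conjecture~\ref{con:irre}, which is itself unproved, as you yourself acknowledge.
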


We note that Conjecture~\ref{con:ns} can be viewed as an
``improved'' version of once famous girth conjecture for
snarks. Jaeger \cite[Conjecture~1]{JSw} conjectured that there
exist no (nontrivial) snarks of girth greater than 6, which was
later disproved by Kochol in \cite{Ko}.

\medskip

Conjectures~\ref{con:crit} to \ref{con:ns} are related as
follows:
\bigskip
\begin{center}
Conjecture~\ref{con:critful} $\Rightarrow$
Conjecture~\ref{con:crit}    $\Rightarrow$
Conjecture~\ref{con:irre}    $\Rightarrow$
Conjecture~\ref{con:ns}
\end{center}
\bigskip
In~\cite[Conjecture~4.1]{S2} Steffen conjectured that every
hypohamiltonian snark has defect~$3$. Since every
hypohamiltonian snark is easily seen to be
irreducible~\cite{S1}, the validity of
Conjecture~\ref{con:irre} would imply that of Steffen's
conjecture.
\end{remark}

\begin{remark}
In the collection of tested snarks, every non-removable hexagon
in a nontrivial snark is either single-core or double-core. In
other words, non-removable non-core $6$-cycles do not occur
among the tested snarks. The following question suggests
itself.

\begin{problem}
Does there exist a snark $G$ of defect $3$ which contains a
$6$-cycle $C$ such that $G-V(C)$ is $3$-edge-colourable, but
$C$ does not constitute a hexagonal core? (In other words, does
there exist a snark containing a non-removable non-core
hexagon?)
\end{problem}

This problem is closely related to a problem discussed in
\cite{KMN}. It asks whether a certain theoretically derived
colouring set, denoted by $\mathcal{E}_{13}$, admits a
realisation by a suitable $6$-pole. Answering this problem
would represent a significant step towards the so-called
$6$-decomposition theorem for snarks.
\end{remark}

\bigskip

\bigskip

\end{document}